\numberwithin{equation}{section}
\numberwithin{figure}{section}
\numberwithin{table}{section}
\newtheorem{maintheorem}{Theorem}
\newtheorem{maincoro}[maintheorem]{Corollary}
\newtheorem{theorem}{Theorem}[section]
\newtheorem*{theorem*}{Theorem}
\newtheorem{proposition}[theorem]{Proposition}
\newtheorem{observation}[theorem]{Observation}
\theoremstyle{definition}{
\newtheorem{example}[theorem]{Example}
\newtheorem{definition}[theorem]{Definition}
\newtheorem*{definition*}{Definition}

\newtheorem*{example*}{Example}
\newtheorem{remark}[theorem]{Remark}
\newtheorem*{remarks*}{Remarks}
\newtheorem*{remark*}{Remark}

}
\colorlet{refkey}{orange!20}
\colorlet{labelkey}{blue!30}
\numberwithin{equation}{section}
\providecommand{\C}{\mathbb C}
\renewcommand{\C}{\mathbb C}
\newcommand{\R}{\mathbb R}
\newcommand{\Z}{\mathbb Z}
\newcommand{\col}{{\operatorname{col}}}
\renewcommand{\P}{\mathbb{P}}
\newcommand{\Mod}[1]{\,\left(\textup{mod}\;#1\right)}
\DeclareMathOperator{\PGL}{PGL}
\DeclareMathOperator{\PSL}{PSL}
\renewcommand{\epsilon}{\varepsilon}
\newcommand{\cG}{{\mathcal{G}}}
\newcommand{\cW}{{\mathcal{W}}}
\newcommand{\cY}{{\mathcal{Y}}}
\newcommand{\cB}{{\mathcal{B}}}
\newcommand{\fC}{{\mathfrak{C}}}
\newcommand{\fX}{{\mathfrak{X}}}
\newcommand{\one}{\mathbbm{1}}
\newcommand{\tmix}{t_\textsc{mix}}
\newcommand{\tv}{{\textsc{tv}}}
\newcommand{\diam}{\operatorname{diam}}
\newcommand{\diag}{\operatorname{diag}}
\newcommand{\Lp}[1][p]{\mbox{\tiny $(L^p)$}}
\DeclareMathOperator{\rank}{rank}
\DeclareMathOperator{\Pspec}{PSpec}
\author[E. Lubetzky]{E. Lubetzky}
\address{E.\ Lubetzky\hfill\break
Courant Institute of Mathematical Sciences\hfill\break
New York University\\ 251 Mercer St.\\ New York, NY~10012\\~USA.}
\email{eyal@courant.nyu.edu}
\author[A. Lubotzky]{A. Lubotzky}
\address{A.\ Lubotzky\hfill\break
Einstein Institute of Mathematics\hfill\break 
Hebrew University\\ Jerusalem~91904\\ Israel.}
\email{alex.lubotzky@mail.huji.ac.il}
\author[O. Parzanchevski]{O. Parzanchevski}
\address{O.\ Parzanchevski\hfill\break
Einstein Institute of Mathematics\hfill\break 
 Hebrew University\\ Jerusalem~91904\\ Israel.}
\email{parzan@mail.huji.ac.il}
\begin{document}

\title[Random walks on Ramanujan complexes and digraphs]{Random walks on\\
 Ramanujan complexes and digraphs}

\begin{abstract}
The cutoff phenomenon was recently confirmed for random walks on Ramanujan graphs by the first author and Peres. In this work, we obtain analogs in higher dimensions, for random walk operators on any Ramanujan complex associated with a simple group $G$ over a local field $F$. We show that if $T$ is any $k$-regular $G$-equivariant operator on the Bruhat--Tits building with a simple combinatorial property (collision-free),  the associated random walk on the $n$-vertex Ramanujan complex has cutoff at time $\log_k n$. 
The high dimensional case, unlike that of graphs, requires tools from non-commutative harmonic analysis and the infinite-dimensional representation theory of $G$. Via these, we  show that operators $T$ as above on Ramanujan complexes give rise to Ramanujan \emph{digraphs} with a special property ($r$-normal), implying cutoff. Applications include geodesic flow operators, geometric implications, and a confirmation of the Riemann Hypothesis for the associated zeta functions over every group $G$, previously known for groups of type $\widetilde A_n$ and $\widetilde C_2$.
\end{abstract}

{\mbox{}
\vspace{-1.8cm}
\maketitle
}
\vspace{-0.71cm}

\section{Introduction}

A finite connected $k$-regular graph $\cG$ is Ramanujan (as defined in~\cite{LPS88}) if every eigenvalue $\lambda\in\R$ of its adjacency matrix satisfies either $|\lambda| \leq 2\sqrt{k-1}$
or $|\lambda| = k$.
In~\cite{Lubetzky2016}, the cutoff phenomenon was verified for simple random walk on  such graphs: the total variation distance of the  walk from its stationary  distribution exhibits a sharp transition at time $\frac{k}{k-2}\log_{k-1} n$, whereby it drops from near the maximum to near zero along an interval of $o(\log n)$ steps. One of the consequences of that analysis was that the typical distance between vertices in $\cG$ is $(1+o(1))\log_{k-1} n$. Our goal is to establish  analogous results in higher dimensions, for a broad class of random walk operators on Ramanujan complexes, as defined next.

Let $\mathbf{G}$ be a simple algebraic group of rank $d$, 
defined over a non-Archimedean local field $F$,
with residue field of order $q$. Let $G=\mathbf{G}\left(F\right)$, and let $\mathcal{B}=\mathcal{B}\left(G\right)$
be the associated Bruhat--Tits building, which is a $d$-dimensional contractible
simplicial complex.
Fix a $d$-dimensional cell $\sigma_{0}$ of $\mathcal{B}$ (a ``chamber'')
and let $\mathcal{I}$ denote the Iwahori subgroup of $G$, which is the (point-wise) stabilizer of $\sigma_{0}$
in $G$, uniquely determined up to conjugation (as $G$ acts transitively
on the chambers).
Let $\Gamma$ be a torsion-free cocompact discrete subgroup of
$G$. The quotient $X=\Gamma\backslash\mathcal{B}$ is a finite $d$-dimensional
simplicial complex, and we say it is a \emph{Ramanujan complex}\footnote{The definition given here slightly differs from the one first given in~\cite{Lubotzky2005a};
see the discussion in~\S\ref{sec:Ramanujan-complexes}.}~iff
\begin{equation}\begin{tabular}{l}every irreducible $\mathcal{I}$-spherical infinite-dimensional \\
$G$-subrepresentation of $L^2\left(\Gamma\backslash G\right)$ is tempered\end{tabular}\,,\label{eq:ram-def}
\end{equation}
where a representation is \emph{tempered} if it is weakly-contained in $L^{2}\left(G\right)$, and
it is \emph{$\mathcal{I}$-spherical} if it contains an
$\mathcal{I}$-fixed vector.

A \emph{$k$-branching} operator $T$ on a subset of cells $\fC\subset \cB$ is a map from $\fC$ to $\binom{\fC}k$, the $k$-element subsets of $\fC$; one can identify $T$ with the adjacency matrix of $\cY_{T,\fC}$, the $k$-out-regular digraph on the vertex set $\fC$, where  $(x,y)$ is an edge iff $y\in T(x)$. When $\cY_{T,\fC}$ is also $k$-in-regular, we say that $T$ is a \emph{$k$-regular branching} operator. We focus on $\fC$ that is $G$-invariant, and furthermore $T$ that is $G$-equivariant (that is, $T(gx) = gT(x)$ for all $x\in\fC$ and $g\in G$). In this case, the $k$-branching operator $T$ on $\fC\subset \cB$ induces such an operator on the finite subset of cells $\fX=\Gamma\backslash\fC \subset X$. 
We will study the mixing time of the random walk on $\fX$ associated with $T$.

Let $\|\mu-\nu\|_{\tv} =\sup_{A} [\mu(A)-\nu(A)]=\frac12\|\mu-\nu\|_{L^1}$ denote total-variation distance on a countable state space. The
 $L^1$ mixing time of a finite ergodic Markov chain with transition kernel $P$, stationary distribution $\pi$ and a worst initial state, is
\[ \tmix(\epsilon)=\min\{ t : D_{\tv}(t)\leq\epsilon\} \quad\mbox{ where }\quad D_{\tv}(t)=\max_{x}\|P^t(x,\cdot)-\pi\|_\tv\,.\]
A sequence of finite ergodic Markov chains is said to exhibit the \emph{cutoff phenomenon} if its total-variation distance from stationarity drops abruptly, over a period of time referred to as the \emph{cutoff window}, from near 1 to near 0; that is, there is cutoff iff $\tmix(\epsilon)=(1+o(1))\tmix(\epsilon')$ for any fixed $0<\epsilon,\epsilon'<1$. This phenomenon,  discovered by Aldous and Diaconis in the early 1980's (see~\cite{Aldous,AD,DiSh,Diaconis}), was verified only in relatively few cases, though believed to be widespread (cf.~\cite[\S18]{LPW09}).

Peres conjectured in 2004 that simple random walk (SRW) on every family of transitive expanders exhibits cutoff, yet there was not a single example of such a family
prior to the result on Ramanujan graphs in~\cite{Lubetzky2016}. The main step there was to show, via spectral analysis, that the \emph{nonbacktracking} random walk (NBRW) --- that does not traverse the same edge twice in a row --- has cutoff at time $\log_{k-1} n$ on any $k$-regular $n$-vertex Ramanujan graph. As it turns out, spectral analysis of the SRW does not directly establish $L^1$-cutoff, as the SRW has $L^1$ and $L^2$ cutoffs at different times. On the other hand, for the NBRW --- which may be viewed as the version of SRW that, on the universal cover of the graph (the $k$-regular tree), never creates cycles --- the locations of the $L^1$ and $L^2$ cutoffs do coincide. 

The analog of this phenomenon in higher dimensions would say that if the random walk operator $T$ satisfies a combinatorial property on the building $\cB$ (generalizing  the notion of avoiding cycles on the universal cover) then it should exhibit cutoff on any Ramanujan complex $X=\Gamma\backslash\cB$ as defined above. For the NBRW, the digraph $\cY_{T,\fC}$ associated with the corresponding operator $T$ on the universal cover is nothing more than the infinite directed (away from the root) $k$-regular tree. The combinatorial criterion defined next broadens the permissible digraphs $\cY_{T,\fC}$ supporting cutoff for $T$ on any Ramanujan complex $X$.  

 Call a $k$-branching operator $T$ \emph{collision-free} if its associated digraph $\cY_{T,\fC}$ has at most one (directed) path from $x$ to $y$ for every  $x,y\in\fC$.
Our main result  is that this combinatorial criterion implies cutoff for all Ramanujan complexes. \begin{maintheorem}
\label{thm:Ramanujan-cutoff}Let $\mathcal{B}$ be the Bruhat--Tits
building associated with a simple algebraic group $G$ of rank $d\geq 1$ over a locally compact
non-Archimedean field. Let $T$ be a $G$-equivariant and collision-free
$k$-regular branching  operator on a subset of cells $\fC \subseteq \mathcal{B}$. Let $X = \Gamma\backslash\cB$ be any Ramanujan complex as defined in~\eqref{eq:ram-def},
let $\fX=\Gamma\backslash\fC\subseteq X$, and set $n=|\fX|$. Let $\rho(x,y) = \min\{\ell : y\in T^\ell(x)\}$ for $x,y\in\fX$ (shortest-path metric in  $\cY_{T,\fX}$), and w.l.o.g.\ assume $\max_{x,y} \rho(x,y)<\infty$.
\begin{enumerate}[(i)]
\item
 There exist $c_G,M_G>0$ depending only on $G$, such that the following holds. If $Y_t$ is the random walk associated with $T$ on $\fX$, modified in its first step to perform $U_0\sim\mathrm{Uniform}(\{1,\ldots,M_G\})$ steps instead of one, then for every fixed $0<\delta<1$ and $n$ large enough,
 \begin{equation*}%\label{eq-tmix-complex}
 \left| \tmix(\delta) -  \log_k n \right | \leq c_G \log_k\log n\,.	
\end{equation*}
In particular, $Y_t$ exhibits worst-case total-variation cutoff at time $\log_k n$.

\item   There exists $c_G>0$, depending only on $G$, such that for every $x\in \fX$,
  \[ \#\left\{ y \in \fX : \left| \rho(x,y) - \log_{k} n \right| > c_G \log_{k}\log n \right\} =o(n)\,.\]
\end{enumerate}
\end{maintheorem}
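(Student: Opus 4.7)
The plan is to reduce the theorem to a cutoff result for \emph{Ramanujan digraphs}, i.e.\ finite $k$-regular digraphs whose non-trivial spectrum is contained in the disk $\{|\lambda|\leq\sqrt{k}\}$ and which satisfy an $r$-normality condition ($T^r$ is normal, for some integer $r=r(G)$). Two ingredients feed into this reduction: a representation-theoretic spectral bound on $T$ acting on $\ell^2(\fX)$, and the combinatorial consequences of the collision-free hypothesis.

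For the spectral side, $G$-equivariance places $T$ inside the Iwahori--Hecke algebra $\mathcal{H}(G,\mathcal{I})$, which acts on $L^2(\Gamma\backslash G)^{\mathcal{I}}\supseteq \ell^2(\fX)$. Decomposing $L^2(\Gamma\backslash G)=\bigoplus_\pi m(\pi,\Gamma)\,\pi$ into irreducibles, $T$ acts block-diagonally on $\mathcal{I}$-fixed vectors. By hypothesis~\eqref{eq:ram-def}, every infinite-dimensional $\mathcal{I}$-spherical $\pi$ appearing is tempered, hence weakly contained in $L^2(G)$, so $\|T|_{\pi^{\mathcal{I}}}\|$ is bounded by the norm of $T$ as a convolution operator on $\ell^2(\cY_{T,\fC})$. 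The collision-free condition forces $\cY_{T,\fC}$ to be a disjoint union of $k$-ary rooted trees, on which Kesten's bound for directed nearest-neighbour walks gives spectral radius $\sqrt{k}$. The remaining ``exceptional'' spectrum on $\ell^2(\fX)$ comes from the finite-dimensional $G$-subrepresentations of $L^2(\Gamma\backslash G)^{\mathcal{I}}$ together with the trivial representation (carrying the Perron eigenvalue $k$); it contributes only $O_G(1)$ eigenvalues on the circle $\{|\lambda|=k\}$. The $r$-normality claim then follows from the fact that $T$, as an element of $\mathcal{H}(G,\mathcal{I})$, commutes with a sufficiently rich finite-dimensional subalgebra, so that $T^{r}$ is normal for some $r$ depending only on $G$.

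Combining the inputs yields $\|T^{t}f\|_2\leq C_G(1+t)^{r}\,k^{t/2}\|f\|_2$ for every mean-zero $f\in\ell^2(\fX)$, once the uniform random first step $U_0\sim\mathrm{Uniform}\{1,\ldots,M_G\}$ has absorbed the exceptional eigenvalues (if $\lambda=ke^{i\theta}$ is one such with $\theta\neq0$, then $\tfrac{1}{M_G}\sum_{j=1}^{M_G}\lambda^{j}=O(k^t/M_G)$ uniformly over $\theta$ bounded away from $0$). Taking $f=\delta_x-n^{-1}\one$ and using $\|\mu-\pi\|_{\tv}\leq\tfrac12 n^{1/2}\|\mu-\pi\|_{2}$ gives the upper bound $\tmix(\delta)\leq\log_k n + c_G\log_k\log n$ in part~(i). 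The matching lower bound uses collision-freeness: $|T^\ell(x)|=k^\ell$, so the support of $Y_t^x$ has size at most $M_G\cdot k^{t+M_G-1}=O(k^t)$, which for $t<\log_k n - c_G\log_k\log n$ is $o(n)$, forcing $\|Y_t^x-\pi\|_{\tv}\geq 1-o(1)$. Part~(ii) follows from the same two-sided counting: the upper tail on $\rho(x,\cdot)$ comes from~(i), since the support of a near-uniform $Y_t^x$ at $t=\log_k n+c_G\log_k\log n$ has $(1-o(1))n$ elements, all at digraph distance $\leq t+M_G$ from $x$; the lower tail is the trivial ball estimate $|\{y:\rho(x,y)\leq\ell\}|\leq(k^{\ell+1}-1)/(k-1)=o(n)$ for $\ell<\log_k n-c_G\log_k\log n$.

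The hardest step is the spectral one. Translating~\eqref{eq:ram-def}, stated purely in representation-theoretic terms, into a quantitative bound on the spectrum of the combinatorial operator $T$ will require classifying $G$-equivariant $k$-regular branching operators on $\cB$ via the Bruhat decomposition and affine Weyl group, identifying each with a specific element of $\mathcal{H}(G,\mathcal{I})$, and computing its tempered norm via a Macdonald/Plancherel formula for the principal series of $G$. The $r$-normality claim is subtle since $T\neq T^{*}$ in general, and extracting it will require commutation relations in $\mathcal{H}(G,\mathcal{I})$ that have been worked out in the literature only for groups of type $\widetilde{A}_n$ and $\widetilde{C}_2$; handling all simple $G$ uniformly is the main new content of the proof. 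Once this input is in hand, the passage to $L^1$-cutoff via the collision-free property is a direct extension of the nonbacktracking-walk argument of~\cite{Lubetzky2016} to the $r$-normal directed setting.
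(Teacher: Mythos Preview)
Your overall architecture matches the paper's: reduce to a cutoff statement for finite $k$-regular digraphs that are Ramanujan and $r$-normal, then run the $L^2$ argument of~\cite{Lubetzky2016}. However, two of your key steps are incorrect as stated.

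First, the spectral bound. You argue that temperedness (weak containment in $L^2(G)$) bounds $\|T|_{\pi^{\mathcal I}}\|$ by the operator norm of $T$ on $\ell^2(\cY_{T,\fC})$, and then claim that collision-freeness makes $\cY_{T,\fC}$ a ``disjoint union of $k$-ary rooted trees'' on which Kesten gives spectral radius $\sqrt{k}$. This fails: $\cY_{T,\fC}$ is a single $k$-in, $k$-out regular digraph, not a union of trees, and collision-free $k$-regular digraphs can have $L^2$-spectral radius equal to $k$ (the paper's Example~\ref{exa:amenable-digraph} is exactly such an ``amenable'' collision-free digraph). So weak containment alone does not give $|\lambda|\le\sqrt{k}$. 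The paper instead uses that temperedness forces the matrix coefficients $g\mapsto\langle gf,f'\rangle$ to lie in $\bigcap_{\varepsilon>0}L^{2+\varepsilon}(G)$ (Cowling--Haagerup--Howe), so a nontrivial eigenvalue of $T$ on $\fX$ lifts to an $L^{2+\varepsilon}$ eigenfunction on $\fC$; then a direct computation (Proposition~\ref{prop:collision-free-riemann}) shows that any $2^+$-point-spectrum eigenvalue of a collision-free $k$-branching operator satisfies $|\lambda|\le\sqrt{k}$. The distinction between the full $L^2$-spectrum and the $2^+$-point spectrum is essential here.

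Second, $r$-normality in this paper does \emph{not} mean that $T^r$ is normal. It means $A_{\cY_{T,\fX}}$ is unitarily similar to a block-diagonal matrix with blocks of size at most $r$. This is what produces the polynomial correction $\|T^t f\|_2\le C(1+t)^{r-1}k^{t/2}\|f\|_2$ for mean-zero $f$; merely knowing the eigenvalues lie in $\{|\lambda|\le\sqrt k\}$, or that some power of $T$ is normal, does not yield such a bound for a non-normal operator. The paper obtains $r$-normality not from commutation relations in $\mathcal H(G,\mathcal I)$ but from Casselman's embedding of $\mathcal I$-spherical irreducibles into principal series, which gives $\dim\pi^{\mathcal I}\le|W_G|$ and hence blocks of size at most $|\Sigma|\cdot|W_G|$. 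Once both inputs are correctly established, your endgame (the $L^2\to L^1$ bound, the trivial support lower bound, and the distance estimate) is essentially the paper's Proposition~\ref{prop:cutoff-digraph} and Theorem~\ref{thm:distances}.
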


Representation theory has been used to show cutoff for random walks on groups, notably in the pioneering work of Diaconis and Shahshahani~\cite{DiSh} (for more on this, see~\cite{Diaconis03}, the monograph by Diaconis~\cite{Diaconis88} and survey by Saloff-Coste~\cite{SaloffCoste04}); however, this is perhaps the first case where the infinite dimensional representation theory of locally compact groups is used to study such problems.

In~\S\ref{sub:branching-on-simple-gps}, we show that operators such as $T$ above, acting on the chambers of the building, are parameterized as $T = T_{w_0}$, where $w_0$ is an element of the affine Weyl group $\cW$ of $G$. In Theorem~\ref{thm:bruhat-collision-free}, we give a simple necessary and sufficient condition on $w_0\in \cW$ for $T_{w_0}$ to be collision-free.
Moreover, we show that various geodesic flow operators on cells of all dimensions (see Definition~\ref{def:geodesic-flow}) %, some of which were previously studied in relation to zeta functions on complexes (as we later explain), 
are collision-free.

\begin{example}
For $d=2$, one has $j$-dimensional geodesic flow operators ($j=1,2$) on tripartite Ramanujan complexes, with the color of a vertex $x$ denoted by $\col(x)$:
\[
%\hspace{-0.15cm}%\centering
\begin{tabular}{cccc}
\toprule%
$j$ & $k$-branching geodesic flow operator $T$ & $k$  & cutoff location  \\
\midrule
\noalign{\medskip}
%%%
% j=1
%%%
$1$ 
&  
\begin{tabular}{c} 
$\fX = \left\{ (x,y)\in X: \col(y)\equiv\col(x)+1\right\}$ \\
\noalign{\medskip}
$T(x,y) = \{(y,z)\in \fX: \{x,y,z\}\notin X\}$ 
\end{tabular} 
\raisebox{-0.25in}{
\begin{tikzpicture}
      \draw[color=gray,fill=lightgray] (0,0)--(0.25,0.43)--(0.5,0)--(0,0);
      \draw[color=gray,fill=lightgray] (0.51,0)--(0.76,0.43)--(1.01,0)--(0,0);
      \draw[color=black,thick,->] (0,0)--(0.5,0);
      \draw[color=black,thick,->] (0.51,0)--(1.01,0);   
      \draw[color=gray,thin,dashed] (0,0.0) arc (180:360:0.48) ;
	  \node[color=red] at (0.51,-0.45) {$\times$};
	  \node[font=\tiny] at (0,-0.15) {$x$};
	  \node[font=\tiny] at (0.5,-0.2) {$y$};
	  \node[font=\tiny] at (1,-0.15) {$z$};
  \end{tikzpicture}}
&
  $q^2$ & $\frac12\log_q n$ \\
\midrule[0.25pt]
\noalign{\medskip}
%%%
% j=2
%%%
$2$ &  \begin{tabular}{c} 
$\fX = \left\{ (x,y,z)\in X: \begin{array}{l}\col(y)\equiv\col(x)+1\\ 
 \col(z)\equiv\col(y)+1\end{array}
\right\}$ \\
\noalign{\medskip}
$T(x,y,z) = \{(y,z,w) \in \fX\,,\,w\neq x\}$ 
\end{tabular}
\hspace{-0.2in}
\raisebox{-0.2in}{
\begin{tikzpicture}
      \draw[color=black,thick,fill=lightgray] (0,0)--(0.25,0.43)--(0.5,0)--(0,0);
      \draw[color=black,thick,fill=lightgray] (0.3,0.43)--(0.8,0.43)--(0.55,0)--(0.3,0.43);
	  \node[font=\tiny] at (0,-0.15) {$x$};
	  \node[font=\tiny] at (0.52,-0.2) {$y$};
	  \node[font=\tiny] at (0.26,0.57) {$z$};
	  \node[font=\tiny] at (0.75,0.57) {$w$};

  \end{tikzpicture}}
 & $q$ & $\log_q n$ \\
\noalign{\medskip}
\midrule[0.25pt]
\bottomrule
\end{tabular}
%\medskip
%\caption{Caption} \label{tab-comparison}
\]
See~\S\ref{sub:Geodesic-flow} for the general case, where the $j$-dimensional geodesic flow ($1\leq j \leq d$) has cutoff at time $(d+1-j)^{-1} \log_q n$.
\label{ex:geod-flow}
\end{example}
 
The  cutoff locations and shortest-path lengths in Theorem~\ref{thm:Ramanujan-cutoff} are optimal among any operator with maximal out-degree $k$: the walk cannot reach a linear number of vertices, let alone mix, by time $(1-\epsilon)\log_k n$. This shows the extremality of such walks on Ramanujan complexes vs.\ any $k$-out-regular walk on an $n$-element~set.

Let us stress another interesting point: in recent years, Ramanujan complexes have been instrumental in tackling several open problems (see, e.g.,~\cite{Evra2014mixing,Evra2015systolic,FGL+11,Kaufman2016isoperimetric}). In all these applications, one could settle for less than the Ramanujan property, and in fact, a quantitative version of Kazhdan's Property (T) would suffice (see for instance~\cite{Evra2015finite}).
In this work, on the other hand, the Ramanujan property is crucial for the analysis of the walk operators on the complexes to be sharp. 

An important tool in this work is the directed analog of Ramanujan graphs. We define a finite $k$-out-regular digraph $\cY$ to be Ramanujan if the following holds:

\vspace{-0.35cm}
\begin{equation}\begin{tabular}{l}every eigenvalue $\lambda\in\C$ of the adjacency matrix   \\
of $\cY$ satisfies either $\left|\lambda\right|\leq\sqrt{k}$
or $\left|\lambda\right|=k$\end{tabular}\,.\label{eq:ram-digrap-def}
\end{equation}
\vspace{-0.3cm}

In these terms, the fundamental result of Hashimoto~\cite{hashimoto1989zeta} which characterized the spectrum of the NBRW operator on a graph, shows
that if $\cG$ is a $k$-regular undirected Ramanujan graph, then the digraph $\cY_{T,\cG}$ corresponding to the NBRW operator $T$ on $\cG$ is a Ramanujan digraph. This spectral property was one of two key ingredients in the mixing time analysis of the NBRW in~\cite{Lubetzky2016}; the other --- an algebraic property (established in that work for any graph) --- was that the adjacency matrix of $\cY_{T,\cG}$ is unitarily similar to a block-diagonal matrix with blocks of size at most $2$ and bounded entries.

Generalizing this notion, we call a matrix \emph{$r$-normal} if it is unitarily similar to a block-diagonal matrix where each of the blocks has size at most $r$, and say that an operator $T$ on $\fX\subset X$  is $r$-normal if the adjacency matrix of $\cY_{T,\fX}$ is $r$-normal.
 We further call $T$ \emph{irreducible} if $\cY_{T,\fX}$ is strongly-connected, and then say its \emph{period} is the $\gcd$ of all directed cycles in $\cY_{T,\fX}$.

To prove the main theorem, we show that under its hypotheses,  $T$ is $r$-normal for some $r=r(G)$; furthermore, its corresponding digraph $\cY_{T,\fX}$ is Ramanujan. Proposition~\ref{prop:cutoff-digraph}, which is of independent interest, establishes total-variation cutoff for such digraphs by following the argument of~\cite[Theorem~3.5]{Lubetzky2016}, extending to general $r$ what was proved there for $r=2$. As shown in that work for graphs (see~\cite[Corollaries~2 and~3]{Lubetzky2016}; also see~\cite[Theorem~1.3]{Sardari} for an alternative proof, discovered independently, of the result on typical distances  in Ramanujan graphs), this allows one to estimate key geometric features of Ramanujan digraphs:  

\vspace{-0.5cm}
\begin{maintheorem}\label{thm:distances}
For fixed  $k, M,r\geq 1$, let $\cY$ denote a $k$-regular strongly-connected Ramanujan digraph on $n$ vertices, and suppose that its adjacency matrix $A_\cY$ has period $M$ and is $r$-normal.
For every $x\in V(\cY)$,
there is a directed
path of length at most $\log_{k}n + (2r-1+o(1))\log_k\log n$ from $x$ to $y$, and such a path from $y$ to $x$, for all but $o(n)$ of the vertices $y\in V(\cY)$.
In particular, $\diam(\cY)\leq(2+o(1))\log_{k}n$.
\end{maintheorem}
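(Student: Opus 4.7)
The plan is to reduce the geometric claim to a mixing-time estimate for $\cY$ and then invoke Proposition~\ref{prop:cutoff-digraph}. Since $\cY$ is $k$-regular and strongly connected, its stationary distribution $\pi$ is uniform, so for any vertex $x$ and any time $t$ the support $S_t(x)$ of the $t$-step distribution $P^t(x,\cdot)$ satisfies
\[
|S_t(x)|\;\geq\;n\bigl(1-\|P^t(x,\cdot)-\pi\|_\tv\bigr),
\]
because $P^t(x,S_t(x))=1$ while $\pi(S_t(x))=|S_t(x)|/n$, forcing TV discrepancy at least $1-|S_t(x)|/n$. Every vertex in $S_t(x)$ is reachable from $x$ by a directed walk of length exactly $t$; to sidestep periodicity I would run the modified walk whose first move consists of $U_0\sim\mathrm{Uniform}\{1,\dots,M\}$ steps followed by $t-1$ ordinary steps, whose support lies in the union of reachable sets at lengths $\{t,t+1,\dots,t+M-1\}$.

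For the first assertion I invoke Proposition~\ref{prop:cutoff-digraph} for this modified walk. The Ramanujan hypothesis forces $|\lambda|\leq\sqrt{k}$ for every nontrivial eigenvalue, and $r$-normality provides a block-diagonal form with blocks of size $\leq r$; expanding $A^t$ in this form produces an entrywise polynomial factor $O(t^{r-1})$, which upon squaring and summing yields an $L^2$ bound of the form $n\cdot t^{2r-2}k^{-t}$, so $\|P^t(x,\cdot)-\pi\|_\tv\to 0$ whenever $t=\log_k n+(2r-1+\omega(1))\log_k\log n$ with $\omega(1)\to\infty$ arbitrarily slowly. Combining this with the support estimate of the previous paragraph, and noting that the fixed constant $M$ contributes only an additive $O(1)$ absorbed in the $o(1)\log_k\log n$ window, produces directed paths from $x$ to all but $o(n)$ vertices $y$ of length at most $\log_k n+(2r-1+o(1))\log_k\log n$. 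For the analogous statement about paths from $y$ to $x$, I would apply the same argument to the reverse digraph (with adjacency matrix $A_\cY^\top$); since $A_\cY^\top$ is unitarily similar to the transpose of any block-diagonal form of $A_\cY$ via the conjugate unitary, and shares spectrum, $k$-regularity, strong connectivity, and period with $A_\cY$, the hypotheses of Proposition~\ref{prop:cutoff-digraph} are preserved.

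Finally, for the diameter bound I would argue by pigeonhole: for any $x,y\in V(\cY)$, let $A_x$ denote the set of vertices reachable from $x$ within $\log_k n+(2r-1+o(1))\log_k\log n$ steps, and $B_y$ the set of vertices from which $y$ is reachable within as many steps. Both have size $\geq n-o(n)$ by the two previous claims, so $A_x\cap B_y\neq\emptyset$, and concatenating paths at any common $z\in A_x\cap B_y$ yields a directed path from $x$ to $y$ of total length $(2+o(1))\log_k n$. The main obstacle is the quantitative cutoff bound of Proposition~\ref{prop:cutoff-digraph}: this is where $r$-normality does real work, since one must carefully control the contribution of Jordan blocks of size up to $r$ when bounding spectral expansions of $A^t$; this accounts for the precise $2r-1$ constant in the window, in place of the $1$ that suffices in the standard normal ($r=1$) case.
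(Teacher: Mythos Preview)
Your proposal is correct and follows essentially the same route as the paper: invoke Proposition~\ref{prop:cutoff-digraph} for the modified walk to make the total-variation distance $o(1)$ at $t_\star=\log_k n+(2r-1+o(1))\log_k\log n$, translate small TV into a support of size $n-o(n)$ (hence reachability of all but $o(n)$ vertices within $t_\star+O(1)$ steps), repeat the argument on the reverse digraph $A_\cY^\top$ (which inherits $k$-regularity, the Ramanujan spectrum, and $r$-normality), and deduce the diameter bound by intersecting the two large sets. The paper's own proof is terser but structurally identical, citing Proposition~\ref{prop:cutoff-digraph} as a black box rather than re-explaining the origin of the $2r-1$ constant.
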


Theorem~\ref{thm:Ramanujan-cutoff} is deduced from Proposition~\ref{prop:cutoff-digraph} and Theorem~\ref{thm:distances} using the following result, which reduces branching operators on Ramanujan complexes to  digraphs.

\begin{maintheorem}\label{thm:T-collision-free-r-normal-Ramanujan}Let $T$ denote
a collision-free, $G$-equivariant, $k$-regular branching operator
on $\fC\subseteq\mathcal{B}$, let $X=\Gamma\backslash\mathcal{B}$
be a Ramanujan complex as defined in~\eqref{eq:ram-def}, and let $\fX=\Gamma\backslash\fC$. Then the
digraph $\cY_{T,\fX}$ is Ramanujan and $r$-normal for  $r=r(G)$.\end{maintheorem}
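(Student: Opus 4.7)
The strategy is to decompose $L^2(\Gamma\backslash G)$ into irreducible unitary $G$-representations and analyze the action of $T$ on each $K$-invariant subspace, where $K\le G$ is the parahoric subgroup stabilizing a cell of the type(s) appearing in $\fC$. Since $T$ is $G$-equivariant, it is a convolution operator by some $K$-bi-invariant element $t\in\cH(G,K)$ of the Hecke algebra; via the identification $L^2(\fX)\cong L^2(\Gamma\backslash G)^K$ and the Hilbert direct sum $L^2(\Gamma\backslash G)=\bigoplus_\pi m(\pi)\,V_\pi$ (valid since $\Gamma$ is cocompact), the operator $T$ preserves each isotypic component and restricts to a finite-dimensional block on every $V_\pi^K$.

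The $r$-normal conclusion follows from a classical fact of Borel: for any irreducible admissible representation $\pi$ of $G$, the dimension $\dim \pi^\mathcal{I}$, and hence $\dim \pi^K$, is uniformly bounded by a constant $r=r(G)$ depending only on $G$ (one may take $r=|W|$, the affine Weyl group). Choosing an orthonormal basis of $L^2(\fX)$ adapted to the decomposition $\bigoplus_\pi m(\pi)V_\pi^K$ exhibits a unitary similarity between the adjacency matrix of $\cY_{T,\fX}$ and a block-diagonal matrix whose blocks all have size at most $r$, which is precisely the $r$-normal property.

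For the Ramanujan conclusion, the trivial summand contributes an eigenvalue $k$ (as $T$ is $k$-out-regular and fixes constants), and any remaining finite-dimensional $\mathcal{I}$-spherical subrepresentation yields eigenvalues of modulus exactly $k$ by direct computation on the Hecke algebra (for $G$ simple over a non-Archimedean local field, any such summand factors through a finite central quotient). For every irreducible \emph{infinite-dimensional} $\mathcal{I}$-spherical $V_\pi\subseteq L^2(\Gamma\backslash G)$, the Ramanujan hypothesis~\eqref{eq:ram-def} forces $\pi$ to be tempered, hence weakly contained in $L^2(G)$. Weak containment together with standard functional analysis then gives
\[
\|\pi(t)\|\le\|\lambda_{G/K}(t)\|=\|T\|_{L^2(G/K)},
\]
so every eigenvalue of $T$ on $V_\pi^K$ is bounded in modulus by $\|T\|_{L^2(G/K)}$.

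The remaining ingredient, and the principal obstacle, is the spectral bound $\|T\|_{L^2(G/K)}\le\sqrt{k}$ on the universal cover, which is where the collision-free hypothesis enters decisively. Intuitively, collision-freeness forces $\cY_{T,\fC}$ to be tree-like: the out-descendants of any vertex form an infinite $k$-ary tree, and symmetrically the ancestors, so the adjacency operator ought to have the same $L^2$ norm as the directed $k$-ary tree, namely $\sqrt{k}$ (note that $\|T\delta_x\|_2^2=k$ already forces $\|T\|\ge\sqrt{k}$, so the bound is tight). I would establish the upper bound by combining the parameterization of $T$ as $T_{w_0}$ for a Weyl-group element (as stated in~\S\ref{sub:branching-on-simple-gps}) with the spherical transform on the Iwahori--Hecke algebra, in the spirit of the Macdonald / Hashimoto-type identity underlying the NBRW spectral computation on a regular tree. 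Assembling the three cases --- trivial, non-trivial finite-dimensional, and tempered infinite-dimensional --- yields the Ramanujan digraph property for $\cY_{T,\fX}$.
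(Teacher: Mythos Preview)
Your argument for $r$-normality is essentially the paper's (Proposition~\ref{prop:YT-collision-free-r-normal}), with the slip that the affine Weyl group is infinite; the correct uniform bound is $|\Sigma|\cdot|W_G|$ with $W_G$ the \emph{spherical} Weyl group.

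The genuine gap is in the Ramanujan half. The inequality $\|T\|_{L^{2}(\fC)}\le\sqrt{k}$ you rely on is \emph{false}, already for the non-backtracking walk on the $(q+1)$-regular tree (here $k=q$). One checks that $T^{*}T$ is block-diagonal over the tail vertex, each block equal to $I+(q-1)J$ of size $q+1$, with top eigenvalue $q^{2}$; hence $\|T\|_{L^{2}}=q=k$. Equivalently, the explicit Iwahori--Hecke blocks in~\eqref{eq:block} (take $m=2$) have operator norm $q$ although both eigenvalues have modulus $\sqrt{q}$. Since weak containment controls $\|\pi(t)\|$ and not the spectral radius of $\pi(t)$, your chain of inequalities collapses to the trivial estimate $k$. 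This is exactly what Example~\ref{exa:amenable-digraph} warns about: collision-freeness constrains the $2^{+}$-point spectrum, \emph{not} the operator norm or the full $L^2$-spectrum, and no amount of Hecke-algebra manipulation will push $\|T\|_{L^2(\fC)}$ below $k$.

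The paper sidesteps this non-normality obstruction by abandoning operator norms. A nontrivial eigenvalue $\lambda$ on $\fX$ is transferred back to the building via a matrix coefficient: if $f\in V_\pi^{P_\sigma}$ is the eigenfunction, then $\varphi(g)=\langle g f,f_\xi\rangle$ is a $\lambda$-eigenfunction of $T$ on $\fC$. Temperedness of $\pi$, through the Cowling--Haagerup--Howe pointwise bound by the Harish--Chandra function $\Xi\in L^{2+\varepsilon}(G)$, forces $\varphi\in L^{2+\varepsilon}(\fC)$ for every $\varepsilon>0$ (Proposition~\ref{prop:Ramanujan-L2e}). The purely combinatorial Proposition~\ref{prop:collision-free-riemann} then gives $|\lambda|\le\sqrt{k}$ directly from collision-freeness. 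Finally, the trivial eigenvalues are not handled by a ``direct Hecke computation'' either: Proposition~\ref{prop:T-collision-free-Ramanujan} again uses the collision-free hypothesis in an essential way to show that $T$ acts on the finite quotient $G\backslash\fC$ as a disjoint union of $k$-fold cycles.
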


\begin{remark}
We stress that $r$ cannot be bounded as a function of $k$. Such is the case for the geodesic flow operators mentioned in Example~\ref{ex:geod-flow} in general dimension (see Propositions~\ref{prop:j-flow-(m)j-normal} and~\ref{prop:flow-not-m-1-normal}).	
\end{remark}
\vspace{-0.25cm}
\begin{remark}
Passing from  Ramanujan complexes to  Ramanujan digraphs is by no means a tautology; e.g., neither the graphs nor digraphs associated with SRW on the 1-skeleton/chambers of a Ramanujan complex are Ramanujan (cf.~\cite{Lubotzky2005a}). 	
\end{remark}
\vspace{-0.05cm}

The reduction step from Ramanujan complexes to Ramanujan digraphs
uses  representation theory and harmonic analysis of $G$:
If $\lambda$ is an eigenvalue of the digraph
induced by $T$ on $\fX=\Gamma\backslash\fC$, with an eigenfunction
$f$, then $f$ induces a matrix coefficient $\varphi_{f}$ on $G$,
which is a $\lambda$-eigenfunction of $T$ on $\fC$. If
$X=\Gamma\backslash\mathcal{B}$ is Ramanujan, then $\varphi_{f}$
is a matrix coefficient of a tempered representation. A general result
on semisimple groups in~\cite{Haagerup1988} ensures that $\varphi_{f}$
is in $L^{2+\varepsilon}$ for all $\varepsilon>0$. We then show
that an $L^{2+\varepsilon}$ eigenvalue $\lambda$ of a collision-free
$k$-regular operator $T$ satisfies $\left|\lambda\right|\leq\sqrt{k}$,
which implies that $\cY_{T,\fX}$ is a Ramanujan digraph.
Using  that the space $L^{2}\left(\Gamma\backslash G/P\right)$,
when $P$ is a parahoric subgroup, is decomposed as a direct sum of
irreducible representations of the Hecke--Iwahori algebra, whose irreducible
representations are all of bounded dimension, we deduce that it is
also $r$-normal for some $r$ (see~\S\ref{sec:Ramanujan-complexes}).

Finally, our analysis of random walk operators also implies a result
on the zeta functions of Ramanujan complexes. Recall (see~\S\ref{sec:Zeta-functions}
for further details) that Ihara~\cite{ihara1966discrete} and Hashimoto~\cite{hashimoto1989zeta}
associated zeta functions with $k$-regular (di)graphs, and showed
that the graph is Ramanujan iff the zeta function satisfies
the Riemann hypothesis (R.H.). The zeta function of a digraph $\cY$
is  
$Z_{\cY}\left(u\right)=\prod_{\left[\gamma\right]}\frac{1}{1-u^{\ell\left(\gamma\right)}}
$,
where $\left[\gamma\right]$ runs over equivalence classes of
primitive directed cycles of $\cY$ (see~\S\ref{sec:Zeta-functions}).
We say that it satisfies the R.H.\ if $Z_{\cY}\left(k^{-s}\right)=\infty$
implies $|s|=1$ or $\Re\left(s\right)\leq\frac{1}{2}$.

Following W.\ Li, various authors have defined zeta functions associated with Ramanujan
complexes, and verified the R.H.\ for them over groups of type $\widetilde A_n$ and~$\widetilde C_2$ (cf.~\cite{kang2010zeta,fang2013zeta,kang2014zeta,kang2015zeta,kang2016riemann}).  These can all be described in our notation as $Z_{\cY_{T,\fX}}\left(u\right)$  for suitable collision-free
branching operators $T$  on $\mathcal{B}$ (see~\S\ref{sec:Zeta-functions}). 
As a corollary of Theorem~\ref{thm:T-collision-free-r-normal-Ramanujan}, we thus have a far reaching generalization of all of these results:
\begin{maincoro}
\label{cor:R-H-for-Ramanujan} Let $X$ be a Ramanujan complex as in~\eqref{eq:ram-def}, and let $Z_{\cY_{T,\fX}}(u)$ be
the zeta function associated with a $G$-equivariant, collision-free, $k$-regular branching operator $T$
on $\cB=\mathcal{B}\left(G\right)$. Then $Z_{\cY_{T,\fX}}(u)$ satisfies the Riemann Hypothesis;
that is, if $Z_{\cY_{T,\fX}}\left(u\right)$ has a pole at $k^{-s}$
then $|s|=1$ or $\Re\left(s\right)\leq\frac{1}{2}$.
\end{maincoro}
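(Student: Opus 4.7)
The plan is to reduce the Riemann Hypothesis for $Z_{\cY_{T,\fX}}$ to the spectral Ramanujan bound on $\cY_{T,\fX}$ supplied by Theorem~\ref{thm:T-collision-free-r-normal-Ramanujan}. The bridge between the Euler-product definition of the zeta function and the spectrum of the digraph is the classical determinantal identity
\[
Z_{\cY_{T,\fX}}(u) \;=\; \frac{1}{\det(I - uA)}\,,
\]
where $A$ is the adjacency matrix of $\cY_{T,\fX}$. This identity is cleaner here than in Ihara's original undirected setting: in a digraph the Euler product already ranges over primitive directed closed walks, so no passage to a non-backtracking operator on oriented edges is required.

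I would derive the identity by matching two logarithmic expansions (convergent for $|u|<1/k$). Expanding $-\log(1-u^{\ell(\gamma)})$ as a geometric series yields $\log Z_{\cY_{T,\fX}}(u)=\sum_{[\gamma]}\sum_{n\geq 1} u^{n\ell(\gamma)}/n$, while $-\log\det(I-uA)=\sum_{m\geq 1}\mathrm{tr}(A^m)\,u^m/m$. Since $\mathrm{tr}(A^m)$ counts closed directed walks of length $m$ in $\cY_{T,\fX}$, and each such walk is uniquely specified by a primitive cycle class $[\gamma]$ with $\ell(\gamma)\mid m$, a starting vertex on the cycle (contributing a factor $\ell(\gamma)$), and a traversal count $n=m/\ell(\gamma)$, a short computation matches the two series term by term. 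The resulting identity between rational functions then extends to all $u$ by analytic continuation.

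Once the determinantal formula is in place, the corollary is immediate: a pole of $Z_{\cY_{T,\fX}}(k^{-s})$ exists precisely when $k^{s}$ equals some nonzero eigenvalue $\lambda$ of $A$. By Theorem~\ref{thm:T-collision-free-r-normal-Ramanujan}, every such $\lambda$ satisfies $|\lambda|=k$ or $|\lambda|\leq\sqrt{k}$, and since $|k^{s}|=k^{\Re(s)}$ these two cases force $\Re(s)=1$ or $\Re(s)\leq\tfrac12$, respectively, which is the desired statement. There is essentially no remaining obstacle at this stage, because all substantive work has been absorbed into Theorem~\ref{thm:T-collision-free-r-normal-Ramanujan}, whose proof invokes tempered matrix coefficients and Haagerup-type $L^{2+\varepsilon}$ estimates to bound the non-trivial spectrum of $\cY_{T,\fX}$; the corollary is simply the translation of that spectral bound into the language of zeta functions.
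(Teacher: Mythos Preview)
Your proposal is correct and follows essentially the same route as the paper: invoke the determinantal identity $Z_{\cY_{T,\fX}}(u)=\det(I-uA_\cY)^{-1}$ (the paper quotes it as Theorem~\ref{thm:digraph-zeta-polynom} from~\cite{bowen1970zeta,kotani2000zeta}, whereas you sketch its standard proof), then read off the pole locations from the spectral bound in Theorem~\ref{thm:T-collision-free-r-normal-Ramanujan}. The only cosmetic discrepancy is that your argument yields $\Re(s)=1$ in the trivial case rather than the ``$|s|=1$'' appearing in the corollary's statement; your conclusion is in fact what the spectral bound actually gives, so nothing is lost.
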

We should mention that Kang~\cite{kang2016riemann} (and the other
references mentioned above) obtained more detailed
information about the location of the poles, by carefully examining the representations appearing in these cases.

Note that Corollary~\ref{cor:R-H-for-Ramanujan} is only part of
the story; as asked explicitly by Kang~\cite{kang2016riemann}, one would like to have the
converse, i.e., that the R.H.\ implies the Ramanujan property. This (and much more)
has been established recently by Kamber~\cite{Kamber2017LP}. The reader is referred
to his paper for a wealth of interesting related results.

\section{Spectrum of branching operators}\label{sec:Spectrum}

Let $X$ be a countable set and, as usual, let $L^{p}=L^{p}\left(X\right)$ for $1\leq p\leq\infty$ denote its complex $L^{p}$-space. Every $k$-branching
operator $T$ on $X$ gives rise to a bounded
linear map $A=A_{\cY_{T,X}}:L^{p}\left(X\right)\rightarrow L^{p}\left(X\right)$
for every $p$, where $A_{\cY_{T,X}}$ corresponds to  the adjacency matrix of $\cY_{T,X}$, the digraph associated with $T$; namely,
\[
\left(A f\right)\left(x\right)=\sum_{y\in T\left(x\right)}f\left(y\right)\,.
\]
We are mostly interested in the
spectrum of $A$ on $L^{2}$, but our analysis will also consider it
as acting on $L^{p}$ for $p\neq 2$.
\begin{definition} Let $A$ be the map associated with a $k$-branching operator~$T$.
\begin{enumerate}
\item The \emph{spectrum} of $A$ (on $L^{2}$) is
\[
\mathrm{spec}\left(A\right)=\Bigl\{ \lambda\in\mathbb{C}\,:\;\left(\lambda I-A\right)\big|_{L^{2}}\text{ does not have a bounded inverse}\Bigr\} ,
\]
and the \emph{spectral radius }of $A$ is $\rho\left(A\right)=\max\left\{ \left|\lambda\right|\,:\;\lambda\in\mathrm{spec}\left(A\right)\right\}$.
\item The $p$-\emph{point spectrum} of $A$ is 
\[
\mathrm{Pspec}_{p}\left(A\right)=\Bigl\{ \lambda\in\mathbb{C}\,:\;\exists f\in L^{p}\text{ such that }Af=\lambda f\Bigr\} .
\]
\item The \emph{$2^+$-point spectrum} of $A$ is 
\[
\mathrm{Pspec}_{2^+}\left(A\right)=\biggl\{ \lambda\in\mathbb{C}\,:\;\exists f\in\bigcap_{\varepsilon>0}L^{2+\varepsilon}\text{ such that }Af=\lambda f\biggr\} .
\]
\item The \emph{approximate point spectrum} of $A$ is 
\[
\mathrm{APspec}\left(A\right)=\biggl\{ \lambda\in\mathbb{C}\,:\; \exists f_{n}\in L^{2}\text{ such that }\frac{\left\Vert Af_{n}-\lambda f_{n}\right\Vert }{\left\Vert f_{n}\right\Vert }\rightarrow0\biggr\} .
\]
\end{enumerate}
\end{definition}
For finite $X$, all these definitions are equivalent,
but otherwise they may differ. In particular, while $\mathrm{APspec\left(A\right)}=\mathrm{spec}\left(A\right)$
holds for a \emph{normal} operator $A$, this may not hold in general; in this work we will (mainly) consider operators that are not normal, so this distinction
is significant for us. We start with a trivial bound:
\begin{proposition}
\label{prop:bound-deg}If $T:X\rightarrow2^{X}$ is a branching operator
with in-degrees and out-degrees bounded by $k$, then 
$
\rho\left(A_{\cY_T}\right)\leq\left\Vert A_{\cY_T}\right\Vert _{2}\leq k\,.
$
\end{proposition}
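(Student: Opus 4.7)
The plan is to prove the two inequalities separately, starting with the operator norm bound $\|A_{\cY_T}\|_2 \leq k$, and then invoking standard functional analysis for $\rho(A_{\cY_T}) \leq \|A_{\cY_T}\|_2$.

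For the operator norm bound, I would apply Cauchy--Schwarz directly to the pointwise formula $(Af)(x) = \sum_{y\in T(x)} f(y)$. Using that the out-degree $|T(x)|$ is at most $k$,
\[
|(Af)(x)|^{2} \leq |T(x)| \sum_{y\in T(x)} |f(y)|^{2} \leq k \sum_{y\in T(x)} |f(y)|^{2}.
\]
Summing over $x\in X$ and swapping the order of summation gives
\[
\|Af\|_{2}^{2} \leq k \sum_{y\in X} |f(y)|^{2} \cdot \#\{x : y\in T(x)\} \leq k^{2}\|f\|_{2}^{2},
\]
where the second inequality uses that the in-degree $\#\{x:y\in T(x)\}$ is also bounded by $k$. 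This yields $\|A_{\cY_T}\|_{2}\leq k$. (Equivalently, one can view this as the Schur test with constant weight, exploiting that the row sums and column sums of $|A_{\cY_T}|$ are each at most $k$.)

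For the spectral radius, $\rho(A)\leq \|A\|_{2}$ is a general fact for bounded operators on a Hilbert space: if $|\lambda| > \|A\|_{2}$, then the Neumann series $\sum_{n\geq 0}\lambda^{-n-1}A^{n}$ converges in operator norm and furnishes a bounded inverse of $\lambda I - A$, so $\lambda \notin \spec(A)$. Combining this with the previous step gives $\rho(A_{\cY_T}) \leq \|A_{\cY_T}\|_{2}\leq k$, as required.

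No step here is a real obstacle; the only mild care needed is that $A$ is not assumed self-adjoint or even normal (indeed, the whole point of this section is to treat non-normal operators), so one cannot replace $\|A\|_{2}$ by a spectral quantity. The Cauchy--Schwarz/Schur argument is immune to this issue since it works directly on the $L^{2}$ norm.
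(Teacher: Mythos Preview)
Your proof is correct and follows essentially the same approach as the paper: Cauchy--Schwarz on each $|(Af)(x)|^2$ using the out-degree bound, then summing over $x$ and using the in-degree bound, together with the general inequality $\rho(A)\le\|A\|_2$ for bounded operators. The paper's presentation is slightly terser (it states $\rho\le\|A\|_2$ without the Neumann-series justification), but the argument is the same.
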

\begin{proof}
The bound $\rho\left(A_{\cY_T}\right)\leq\left\Vert A_{\cY_T}\right\Vert _{2}$
holds for any bounded operator, and if $f\in L^{2}\left(X\right)$
with $\left\Vert f\right\Vert _{2}=1$ then 
\[
\left\Vert A_{\cY_T}f\right\Vert _{2}^{2}=\sum_{x\in X}\Bigl| \sum_{y\in T\left(x\right)}f\left(y\right)\Bigr|^{2}\leq k\sum_{x\in X}\sum_{y\in T\left(x\right)}\left|f\left(y\right)\right|^{2}\leq k^{2}\sum_{y\in X}\left|f\left(y\right)\right|^{2}=k^{2}\,,
\]
where the first inequality is by Cauchy--Schwarz.
\end{proof}
Clearly, if $T$ is a $k$-branching collision-free operator, then the associated digraph $\cY_T$  must be infinite. Such
a digraph looks like a tree from the point of view of each $y_{0}\in \cY$.
Still we should warn that, once the orientation is ignored, it may
have non-trivial closed paths. Moreover, its spectrum can be very
different from that of a tree --- see Example \ref{exa:amenable-digraph}
for an ``amenable'' collision-free digraph. Still, the $2^+$-point spectrum of such a graph is very limited. 
\begin{proposition}
\label{prop:collision-free-riemann} Let $\cY=\cY_T$ for a $k$-branching collision-free operator $T$. Then every $\lambda\in \Pspec_{2^+}(A_{\cY})$ satisfies $\left|\lambda\right|\leq\sqrt{k}$.
\end{proposition}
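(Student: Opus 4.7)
\medskip

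\noindent\textbf{Proof plan.} Suppose $f\in\bigcap_{\varepsilon>0}L^{2+\varepsilon}(X)$ is nonzero with $A_\cY f=\lambda f$. The strategy is to iterate the eigenvalue equation at a single base point and exploit the collision-free property to get exactly $k^\ell$ endpoints at depth $\ell$, then let $\ell\to\infty$ and $p\to 2^+$.

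First I would pick any $x_0\in X$ with $f(x_0)\neq 0$ and, after rescaling, assume $f(x_0)=1$. Iterating $A_{\cY}f=\lambda f$ gives $A_{\cY}^{\ell}f=\lambda^{\ell}f$, and unwinding the definition of $A_{\cY}$ yields
\[
\lambda^{\ell}=\bigl(A_{\cY}^{\ell}f\bigr)(x_0)=\sum_{\gamma}f(\operatorname{end}(\gamma)),
\]
where $\gamma$ ranges over directed paths of length $\ell$ in $\cY_T$ starting at $x_0$. Because $T$ is $k$-branching, there are exactly $k^{\ell}$ such paths; because $T$ is collision-free, no two of them share an endpoint. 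Hence the sum can be rewritten as $\sum_{y\in T^{\ell}(x_0)}f(y)$ with $\lvert T^{\ell}(x_0)\rvert=k^{\ell}$.

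Next I would apply H\"older's inequality to this sum with exponents $p>2$ and $q=p/(p-1)$:
\[
|\lambda|^{\ell}=\Bigl|\sum_{y\in T^{\ell}(x_0)}f(y)\Bigr|\leq \bigl|T^{\ell}(x_0)\bigr|^{1/q}\Bigl(\sum_{y\in T^{\ell}(x_0)}|f(y)|^{p}\Bigr)^{1/p}\leq k^{\ell(1-1/p)}\|f\|_{p}.
\]
Since $f\in L^{p}$ for every $p>2$, the right-hand side is finite. Taking $\ell$-th roots and letting $\ell\to\infty$ kills the $\|f\|_p^{1/\ell}\to1$ factor, so $|\lambda|\leq k^{1-1/p}$. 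Letting $p\searrow2$ then yields $|\lambda|\leq\sqrt{k}$, as required.

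There is no real obstacle here: the only subtlety is verifying that iterating $A_{\cY}$ really does count directed paths with multiplicity, and that collision-free plus $k$-branching forces those paths to land on $k^{\ell}$ distinct vertices. Both follow directly from the definitions, and the rest is just H\"older and a limit. Note in particular that the argument does not require any assumption on the in-degrees of $T$, so it applies to the fully general setting of the proposition.
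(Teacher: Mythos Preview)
Your argument is correct and rests on the same core ingredients as the paper's proof: evaluate the eigenvalue relation at a basepoint, use collision-freeness to ensure $T^{\ell}(x_0)$ consists of $k^{\ell}$ distinct vertices, and control the sum by H\"older. The execution differs slightly: the paper proceeds inductively, showing $\sum_{w\in T^{j}(x_0)}|f(w)|^{2+\varepsilon}\geq|\lambda^{2+\varepsilon}/k^{1+\varepsilon}|^{j}$ for every $j$, and then sums over all levels (using that the sets $T^{j}(x_0)$ are pairwise disjoint across $j$) to force $\|f\|_{2+\varepsilon}=\infty$ when $|\lambda|>k^{(1+\varepsilon)/(2+\varepsilon)}$. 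Your version is more direct --- a single application of H\"older at level $\ell$, followed by $\ell\to\infty$ and $p\searrow 2$ --- and in fact uses a weaker consequence of collision-freeness (only that endpoints at a \emph{fixed} level are distinct, not that different levels are disjoint). Both routes are standard and equally short; yours is marginally cleaner.
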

\begin{proof}
Let $f\in L^{\infty}\left(\cY\right)$ be an eigenfunction of $A_{\cY}$
with eigenvalue $\lambda$. Scale $f$ so that $f\left(v\right)=1$
for some $v$, and fix $\varepsilon>0$. Let us show that $T^{j}\left(v\right)$, the $j$-th level in
the subtree of $\cY$ emanating from $v$, satisfies
\[
\sum_{w\in T^{j}\left(v\right)}\left|f\left(w\right)\right|^{2+\varepsilon}\geq\biggl|\frac{\lambda^{2+\varepsilon}}{k^{1+\varepsilon}}\biggr|^{j}.
\]
This indeed holds for $j=0$. Notice that if $w,w'$ are different
vertices in $T^{j}\left(v\right)$, then $T\left(w\right)$ and $T\left(w'\right)$
are disjoint, since $\cY$ is collision-free. This allows us to proceed
by induction:
\begin{align*}
\sum_{w\in T^{j+1}\left(v\right)}&\left|f\left(w\right)\right|^{2+\varepsilon}  =\sum_{u\in T^{j}\left(v\right)}\sum_{w\in T\left(u\right)}\left|f\left(w\right)\right|^{2+\varepsilon}
\geq \frac{1}{k^{1+\varepsilon}}\sum_{u\in T^{j}\left(v\right)}\biggl(\sum_{w\in T\left(u\right)}\left|f\left(w\right)\right|\biggr)^{2+\varepsilon}\,,
\end{align*}
where the last inequality is a consequence of H\"older's inequality, since 
 for every H\"older conjugates $p,q$ one has $\sum_{i=1}^{k}\left|a_{i}\right|^{p}\geq k^{-p/q}(\sum_{i=1}^{k}|a_{i}|)^{p}$. The right-hand of the last display is, in turn, at least
\begin{align*}
\frac{1}{k^{1+\varepsilon}}\sum_{u\in T^{j}\left(v\right)}\left|\lambda f\left(u\right)\right|^{2+\varepsilon}=\frac{\left|\lambda\right|^{2+\varepsilon}}{k^{1+\varepsilon}}\sum_{u\in T^{j}\left(v\right)}\left|f\left(u\right)\right|^{2+\varepsilon}\geq\left|\frac{\lambda^{2+\varepsilon}}{k^{1+\varepsilon}}\right|^{j+1}\,.
\end{align*}
Since $\cY$ is collision-free,
the levels $T^{j}\left(v\right)$ are disjoint for different $j$'s,
so that 
\[
\left\Vert f\right\Vert _{2+\varepsilon}^{2+\varepsilon}\geq\sum_{j=0}^{\infty}\sum_{w\in T^{j}\left(v\right)}\left|f\left(w\right)\right|^{2+\varepsilon}\geq\sum_{j=0}^{\infty}\left|\frac{\lambda^{2+\varepsilon}}{k^{1+\varepsilon}}\right|^{j},
\]
which is finite iff $\left|\lambda\right|<k^{\frac{1+\varepsilon}{2+\varepsilon}}$.
Thus if, $f\in\bigcap_{\varepsilon>0}L^{2+\varepsilon}\left(\cY\right)$,
then $\left|\lambda\right|\leq\sqrt{k}$.
\end{proof}
However, it is not necessarily the case that an arbitrary $\lambda$
in the spectrum of $A_{\cY}$ on $L^{2}\left(\cY\right)$ has $|\lambda|\leq\sqrt{k}$, and so $\cY$ is
not necessarily Ramanujan; indeed, the following example shows that the situation for the full spectrum of a collision-free
digraph can be drastically different from the $2^+$-point spectrum.
\begin{example}[Amenable collision-free digraph]
\label{exa:amenable-digraph}Let $V=\mathbb{N}\times\left(\nicefrac{\mathbb{Z}}{k\mathbb{Z}}\left[x\right]\right)$
with each $\left(n,p\left(x\right)\right)$ connected into the $k$
vertices $\left(n+1,p\left(x\right)+a\cdot x^{n}\right)$ with $a\in\nicefrac{\mathbb{Z}}{k\mathbb{Z}}$.
This digraph is $k$-out-regular, and for any $m\in\mathbb{N}$, one can take
\[
f=\one_{\left\{ \left(0,p\left(x\right)\right)\,:\;\deg p(x)<m\right\} }\,,
\]
for which $T^{m}f=k^{m}\cdot\one_{\left\{ \left(m,p\left(x\right)\right)\,:\;\deg p(x)<m\right\} }$\,,
implying 
\[
\left\Vert T^{m}\right\Vert \geq\frac{\left\Vert T^{m}f\right\Vert }{\left\Vert f\right\Vert }=\frac{k^{3m/2}}{k^{m/2}}=k^{m}\,.
\]
This gives $\rho\left(T\right)=k$, since $\rho\left(T\right)\leq k$
by Proposition \ref{prop:bound-deg} and in general one has $\rho\left(T\right)=\lim_{m\rightarrow\infty}\sqrt[m]{\left\Vert T^{m}\right\Vert }$.
In fact, $k$ is even in the approximate point spectrum of $T$, since, for
$f_{m}=\one_{\left\{ \left(j,p\left(x\right)\right)\,:\;j<m,\,\deg p(x)<m\right\} }$,
\[ T(f_m) = k \cdot \one_{\left\{ (j,p(x)) \,:\; 1\leq j \leq m\,,\, \deg p(x)<m\right\}}\,,
\]
and hence $\left\Vert \left(T-k\right)f_{m}\right\Vert /\left\Vert f_{m}\right\Vert \overset{{\scriptscriptstyle m\rightarrow\infty}}{\longrightarrow}0$.
\end{example}
In~\S\ref{sec:Ramanujan-complexes}
we will show that in the cases we focus on --- affine buildings associated with simple groups --- spectral behaviors as in the example above cannot
occur.

\section{Random walks on Ramanujan digraphs}\label{sec:Random-walks-digraphs}
The goal of this section is to establish the following cutoff result for Ramanujan digraphs, generalizing~\cite[Theorem~3.5]{Lubetzky2016} from $r=2$ to any fixed $r$.

In what follows, say that a matrix $A$ indexed by $V\times V$ is $r$-normal w.r.t.\ a distribution $\pi$ on $V$ if there exists a block-diagonal matrix $\Lambda$, whose blocks are each of size at most $r$, and a basis $\{w_i\}_{i\in V}$ of $\C^V$, such that $A W = W \Lambda$ for the matrix  $W$ which has the $w_i$'s as its columns, and the $w_i$'s form an orthonormal system w.r.t.\ the inner product $\left<f,g\right>_{L^2(\pi)} := \sum_{x\in V} f(x) \overline{g(x)}\pi(x)$.
We will look at  distributions $\pi$ where the ratio between the maximum and minimum probabilities is at most $\log^\alpha |V|$ for some $\alpha\geq 0$ (in our main result $\pi$ is uniform, i.e., $\alpha=0$).

\begin{proposition}
\label{prop:cutoff-digraph}
Fix  $k, b,M,r\geq 1$, $\alpha\geq0$, and let $\cY$ be a $k$-out-regular $n$-vertex  strongly-connected digraph, with  all in-degrees at most $b$, and a period dividing $M$.
Let $Y_t$ be \emph{SRW}  on $\cY$, modified  to make $U_0\sim\mathrm{Uniform}(\{1,\ldots,M\})$ steps in its first move, let $\pi$ be its stationary distribution, and suppose $\|\pi\|_{\infty} / \|\pi^{-1}\|_\infty \leq \log^\alpha n$.
If $\cY$ is Ramanujan and its adjacency matrix $A_\cY$ is $r$-normal w.r.t.\ $\pi$, then 
\begin{equation*}
 \left| \tmix(\delta) -  \log_k n \right | \leq (2r-1+\alpha+o(1))\log_k\log n\qquad\mbox{ for every fixed $0<\delta<1$}\,.
\end{equation*}
In particular, $Y_t$ exhibits worst-case total-variation cutoff at time $\log_k n$.
\end{proposition}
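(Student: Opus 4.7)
The proof will follow the strategy of \cite[Theorem~3.5]{Lubetzky2016}, which treated the case $r=2$, extending its $L^2$ analysis to blocks of arbitrary bounded size. The key tool is the $r$-normal decomposition $A_\cY W = W\Lambda$ with $\Lambda$ block-diagonal (blocks of size $\leq r$) and columns $\{w_i\}$ of $W$ orthonormal in $L^2(\pi)$. My plan is: (i) bound the operator norm of $\tilde P^t$ on $L^2(\pi)$ restricted to mean-zero functions, where $\tilde P$ is the kernel of the modified walk; (ii) convert to total variation via Cauchy--Schwarz / $\chi^2$; (iii) substitute the claimed threshold for $t$. A trivial out-degree count provides the matching lower bound.

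Step (i): since $\cY$ is strongly connected with period dividing $M$, Perron--Frobenius identifies the peripheral spectrum of $A_\cY$ as $\{k\omega : \omega^M=1\}$, each simple. Averaging over $U_0 \sim \mathrm{Unif}\{1,\dots,M\}$ annihilates all of these except $\lambda=k$, since $\tfrac{1}{M}\sum_{j=1}^M \omega^j = 0$ for every nontrivial $M$-th root of unity $\omega$. What remains on the mean-zero subspace consists of blocks of $\Lambda$ of spectral radius at most $\sqrt{k}$, by the Ramanujan property. For a complex matrix $B$ of size $s \leq r$ with spectral radius $\mu$, a Schur/Jordan decomposition gives a uniform operator bound $\|B^t\|_{\mathrm{op}} \leq C_r (\|B\|+1)^{r-1} t^{r-1}\mu^{t-r+1}$ for $t \geq r$. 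Applied to the blocks of $A_\cY$---whose entries are controlled by $\|A_\cY\|_2 \leq k$ via Proposition~\ref{prop:bound-deg}---with $\mu \leq \sqrt{k}$, together with the orthonormality of $\{w_i\}$, this yields
\[
\|\tilde P^t f\|_{L^2(\pi)} \leq C_{r,k}\,t^{r-1} k^{-t/2}\|f\|_{L^2(\pi)}
\qquad\text{for every mean-zero $f\in L^2(\pi)$.}
\]

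Step (ii) uses $4\|P^t(x,\cdot)-\pi\|_\tv^2 \leq \chi^2(P^t(x,\cdot)\|\pi) = \|P^t(x,\cdot)/\pi - 1\|_{L^2(\pi)}^2$, and the latter equals $\|(\tilde P^*)^t(\delta_x/\pi(x) - 1)\|_{L^2(\pi)}^2$ after expressing $P^t(x,y)/\pi(y)$ through the $L^2(\pi)$-adjoint. The adjoint preserves $r$-normality (it conjugate-transposes each block) and the moduli of eigenvalues, so the step-(i) estimate applies to $\tilde P^*$ as well. Using $\|\delta_x/\pi(x) - 1\|_{L^2(\pi)} \leq \pi(x)^{-1/2}$ together with the hypothesis $\pi_{\max}/\pi_{\min} \leq \log^\alpha n$ and $\pi_{\max}\geq 1/n$ (so $\pi(x)^{-1/2} \leq (n\log^\alpha n)^{1/2}$) yields
\[
\|\mathrm{law}(Y_t)-\pi\|_\tv \leq C_{r,k}\,t^{r-1} k^{-t/2}(n\log^\alpha n)^{1/2},
\]
which becomes a negative power of $\log n$ once $t = \log_k n + (2r-1+\alpha+\epsilon)\log_k\log n$. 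For the matching lower bound, a support count gives $|\mathrm{supp}(\mathrm{law}(Y_t))| \leq M k^{t+M-1} = O(k^t)$, whence $\pi(\mathrm{supp}) = O(k^t \log^\alpha n / n) = o(1)$ whenever $t \leq \log_k n - (\alpha+\epsilon)\log_k\log n$, forcing $\|\mathrm{law}(Y_t)-\pi\|_\tv \geq 1-o(1)$.

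The principal obstacle is the non-normality of $A_\cY$: the uniform block estimate $\|B^t\|_{\mathrm{op}} \leq C_r t^{r-1}\rho(B)^{t-r+1}$, with constants independent of $n$, has to be applied across blocks of size up to $r$, whereas in the $r=2$ case of \cite{Lubetzky2016} each block was explicitly $\lambda I+N$ with rank-one nilpotent $N$ and could be handled by hand. The window coefficient $2r-1+\alpha$ then reflects the polynomial factor $t^{r-1}$ (contributing $2(r-1)$), the $\log^{\alpha/2} n$ blow-up from the $L^2$-to-$L^1$ conversion (contributing $\alpha$), and additional slack to absorb lower-order terms.
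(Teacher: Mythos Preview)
Your proposal follows the paper's approach: bound the $t$-th power of each block via its size $\leq r$ and spectral radius $\leq\sqrt{k}$, then use the $L^2(\pi)$-orthonormality of the $w_i$'s to convert to total variation, with a support count for the lower bound. Two small corrections are in order. First, Proposition~\ref{prop:bound-deg} does not apply as stated, since it assumes both in- and out-degrees are bounded by $k$; here in-degrees are only bounded by $b$, and the paper instead uses $\|A_\cY\|_2^2\leq\|A_\cY\|_\infty\|A_\cY\|_1=kb$, so your constants must depend on $b$ as well as $r,k$. Second, you handle periodicity by arguing that the averaging over $U_0$ annihilates the nontrivial peripheral eigenvalues, whereas the paper restricts to each periodic class $V_j$ and applies the aperiodic case to $A_\cY^M\restriction_{V_j}$. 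Your route is valid, but the sentence ``what remains on the mean-zero subspace consists of blocks of spectral radius at most $\sqrt{k}$'' needs the (unstated) fact that each peripheral eigenvector $f_\omega$ can be placed in its own $1\times1$ block of the $r$-normal decomposition; this holds because $f_\omega$ is simultaneously an eigenvector of the $L^2(\pi)$-adjoint $A_\cY^*$, so $f_\omega^\perp$ is $A_\cY$-invariant---without this, a block mixing a peripheral eigenvalue with non-peripheral ones would not obviously behave well under averaging.
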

\begin{proof}
The lower bound on $\tmix(\delta)$ will follow immediately from the following elementary lower bound on the mixing time of random walk (cf.~\cite[Claim~4.8]{LS-gnd}).
\begin{observation} Let $(Y_t)$ be simple random walk on a digraph $\cY$ with $n$ vertices and maximum out-degree $k\geq 2$, and suppose that its stationary distribution $\pi$ satisfies $\|\pi\|_\infty \leq (\log n)^\alpha /n$ for some $\alpha>0$. Then
\begin{equation}
  \label{eq-dtv-lower-bound}
  D_{\tv}(T) \geq 1-1/\log n \quad \mbox{ at }\quad T =\lfloor \log_{k} n  - (\alpha+1)\log_{k}\log n\rceil\,.
\end{equation}
\end{observation}
Indeed, for any initial vertex $x_0\in V(\cY)$, if $S$ is the set of vertices supporting the distribution of $Y_T$ then $|S| \leq k^T \leq n/(\log n)^{\alpha+1}$ gives $\pi(S) \leq \|\pi\|_{\infty} |S| \leq 1/\log n$.

For a matching upper bound, first consider the aperiodic case $M=1$.
Since $\cY$ is strongly-connected, this corresponds to the case where the only eigenvalue $\lambda$ with $|\lambda|=k$ is $k$ itself, which appears with multiplicity 1.
We will show that if $\nu_{t}^{x_0} = \P_{x_0}(Y_t \in \cdot)$ is the distribution of $(Y_t)$ starting from $X_0=x_0$, then 
 \begin{equation}\label{eq-dtv-tsar}
  \max_{x_0\in V(\cY)} \left\| \nu_{t_\star}^{x_0}/ \pi - \underline{1} \right\|^2_{L^2(\pi)} \leq \left(\frac{r^3 b^{r-1}}{(\log k)^{2r-2}} + o(1)\right) \frac1{\log n}
 \end{equation}
 at 
\begin{equation}\label{eq-tstar}
  t_\star = \left\lceil \log_{k} n + (2r-1+\alpha)\log_{k}\log n\right\rceil\,. 
  \end{equation}
In particular, this will imply that $\max_{x_0}\|\nu_{t_\star}^{x_0}/\pi-1\|_{L^1(\pi)} =
O(1/\sqrt{\log n})$, and so
 $\tmix(\delta)\leq t_\star$ for every fixed $\delta>0$ and large enough $n$, concluding the case $M=1$.
  
Let $A_\cY W = W\Lambda$ be the matrices per the $r$-normal decomposition of $A$. That is,
$\Lambda=\diag(B_1,\ldots,B_m)$, where $B_l$ $(l=1,\ldots,m)$ is an $s_l\times s_l$ upper triangular matrix with $s_l \leq r$, and $W$ is a matrix whose columns, denoted $(w_{l,i})_{1\leq l \leq m,\, 1\leq i \leq s_l}$,  form an orthonormal system w.r.t.\ $\left<\cdot,\cdot\right>_{L^2(\pi)}$. 

Since $k^{-1} A_\cY$ has $\pi$ as a left eigenvector with eigenvalue $1$ (by  definition of $\pi$),  it follows that every right eigenvector $v$ of $A_\cY$ with eigenvalue $\lambda\neq k$ is orthogonal to $\pi$ w.r.t.\ the standard complex inner product. Equivalently, every such vector $v$ satisfies $\left<v,\underline{1}\right>_{L^2(\pi)} = 0$, and in particular, since the eigenvalue $\lambda=k$ is simple, we can assume w.l.o.g.\ that $w_{1,1} = \underline{1} $, corresponding to $s_1=1$ and $B_1 = \left(k\right)$ in $\Lambda$, while each $B_l$ for $l>1$ has diagonal entries $(\lambda_{l,1},\ldots,\lambda_{l,s_l})$ satisfying $|\lambda_{l,i}|\leq \sqrt{k}$ for all $1\leq i\leq s_l$.

We claim that, with this notation,
\begin{equation}\label{eq-Bl-entry-bound}
\max_{i,j} |B_l(i,j)| \leq \sqrt{kb} \quad\mbox{ and }\quad  \|B_l\|_F \leq \sqrt{r k b}\,,
\end{equation}
where $\|\cdot\|_F$ is the Frobenius norm.
Indeed, 
\[\left\Vert A_{\cY}\right\Vert _{2}^{2}=\rho\left(A_{\cY}A_{\cY}^{*}\right)\leq\left\Vert A_{\cY}A_{\cY}^{*}\right\Vert _{\infty}\leq\left\Vert A_{\cY}\right\Vert _{\infty}\left\Vert A_{\cY}^{*}\right\Vert _{\infty}=\left\Vert A_{\cY}\right\Vert _{\infty}\left\Vert A_{\cY}\right\Vert _{1}=kb\,. \]
%as $ (A_\cY A_\cY^*)_{x,y} = \sum_z A_\cY(x,z) A_\cY(y,z) $ we see that 
% \[ \|A_\cY A_\cY^*\|_{\infty} = \max_x \sum_{y,z} A_\cY(x,z) A_\cY(y,z) \leq k b\,,
%\]
% and so  $\|A_\cY\|_{2} = \sqrt{\rho(A_{\cY} A_{\cY}^*)} \leq \sqrt{\|A_{\cY}A_{\cY}^*\|_\infty} \leq \sqrt{kb}$ (for $\rho(\cdot)$ the spectral norm). 
In particular, for every $1\leq j \leq s_l$, the  left-hand of  the identity
\[ A_\cY \, w_{l,j} = \sum_{i \leq j} B_l( i, j) w_{l,i} \]
has $\|A_\cY w_{l,j} \|_2^2 \leq \|A_\cY\|_{ 2}^2 \leq k b$, and so applying Parseval's identity to its right-hand yields
\[ kb \geq \max_{1\leq j \leq s_l} \Big\|\sum_{i\leq j} B_l(i,j) w_{l,i} \Big\|_2^2 = \max_{1\leq j \leq s_l} \sum_{i\leq j} \left| B_l(i,j)\right|^2\,,\]
thus implying~\eqref{eq-Bl-entry-bound}.

For $l\geq 2$, every  $B_l$ may be written as $D_l + N_l$, where $D_l = \diag(\lambda_{l,1},\ldots,\lambda_{l,s_l})$ and $N_l$ is a nilpotent matrix of index at most $r$. We can therefore infer from~\eqref{eq-Bl-entry-bound} and the bound $\max_j |\lambda_{l,j}|\leq\sqrt{k}$ that, for all $l\geq 2$ and every $t$,
\begin{equation}\label{eq-Bl-power-bound} \max_{i,j} \left|B_l^t(i,j)\right| %= \max_{i,j} \Big| \sum_{h=0}^{r-1} \binom{t}h \big(D_l^{t-h} N_l^h\big)(i,j)\Big|
 \leq \sum_{h=0}^{r-1} \binom{t}{h}  k^{(t-h)/2} (kb)^{h/2} \leq r \big(t\sqrt{b}\big)^{r-1} k^{t/2}\,.
 \end{equation} 

This bound will allow us to bound the $L^2$-distance of $\nu_t^{x_0}$ from equilibrium. 
Recall that $\nu_t^x(y) = k^{-t} A_\cY^t (x,y)$;
as $A_\cY W = W\Lambda$, it follows that for every $l,j,t$,
\[ A_\cY^t w_{l,j} = W \Lambda^t W^{-1} w_{l,j} =
\sum_{i=1}^{s_l}  B_l^t(i,j) w_{l,i}\,.\]
Combined with the decomposition $\delta_y = \sum_{l,j} \overline{w}_{l,j}(y) \pi(y) w_{l,j}$, this implies that 
\[ \nu_t^{x_0}(y) = k^{-t} \sum_{l=1}^m \sum_{i=1}^{s_l} \sum_{j=i}^{s_l} w_{l,i}(x_0) B_l^t(i,j) {\overline w}_{l,j}(y) \pi(y)\,,\]
and plugging in the fact that for $l=1$ we have $w_{1,1} = \underline{1}$ and $B_1=\left(k\right)$ then yields
\begin{align*}
  \nu_t^{x_0} /\pi   &= \underline{1}  + k^{-t} \sum_{l=2}^m \sum_{i=1}^{s_l} \sum_{j=i}^{s_l} w_{l,i}(x_0) B_l^t(i,j) {\overline w_{l,j}}\,.
\end{align*}
Since the $w_{l,i}$'s form an orthonormal basis w.r.t.\ $\left<\cdot,\cdot\right>_{L^2(\pi)}$, we get that
\begin{align*}
\bigl\|\nu_t^{x_0}/\pi &- \underline{1}\bigr\|^2_{L^2(\pi)} =  k^{-2t} \sum_{l=2}^m \sum_{j=1}^{s_l} \Bigl| \sum_{i=1}^{j}  w_{l,i}
(x_0)B_l^t(i,j)\Bigr|^2 \\
&\leq k^{-2t} \sum_{l=2}^m \sum_{j=1}^{s_l} j \sum_{i=1}^{j}  |w_{l,i}(x_0)|^2 \left|B_l^t(i,j)\right|^2 
\leq  r^3 b^{r-1}   t^{2(r-1)} k^{-t} \sum_{l,i} |w_{l,i}(x_0)|^2 \,,
\end{align*}
where the first inequality in the second line is by Cauchy--Schwarz, and the last one used~\eqref{eq-Bl-power-bound} and that $s_l\leq r$ for all $l$.
Finally, by Parseval's identity,
$ \sum_{l,i} |w_{l,i}(x_0)|^2$ is equal to $ \pi(x_0)^{-2} \|\delta_{x_0}\|_{L^2(\pi)}^2 = \pi(x_0)^{-1}$,
and therefore we conclude that 
 \begin{align}
\left\|\nu_t^{x_0}/\pi - \underline{1}\right\|^2_{L^2(\pi)} & \leq r^3 b^{r-1} t^{2(r-1)} \pi(x_0)^{-1} k^{-t}\,.
\label{eq-dtv-upper-bound}
\end{align}
Plugging in $t=t_\star$ from~\eqref{eq-tstar} in~\eqref{eq-dtv-upper-bound} establishes~\eqref{eq-dtv-tsar}, as desired.

It remains to treat the case $M\geq2$, allowing periodicity. Denote by $m$  the period of $A_\cY$ (so that $m \mid M$ by assumption), let $V(\cY) =V_0 \cup \ldots\cup V_{m-1}$ be an $m$-partition of $\cY$ such that all the out-neighbors of a vertex $x\in V_j$ belong to $V_{j+1\!\!\pmod m}$, and let $\pi_j$ denote the uniform distribution over $V_j$.

We claim that, analogous to the bound~\eqref{eq-dtv-tsar}, we have for every $j=0,\ldots,M-1$,
 \begin{equation}\label{eq-dtv-tsar-period-M}
  \max_{x_0\in V_j} \left\| \nu_{t_{\star}+M}^{x_0}/ \pi_j - \underline{1} \right\|^2_{L^2(\pi_j)} \leq \left(\frac{r^3 b^{r-1}}{(\log k)^{2r-2}} + o(1)\right) \frac1{\log n}\,.
 \end{equation}
 To see this, recall that by the Perron--Frobenius Theorem for irreducible matrices and our hypothesis on $\cY$, all eigenvalues of $A_\cY$ either lie within the ball of radius $\sqrt{k}$, or they are the of the form $\exp(2\pi i j / m) k$ for $j=0,\ldots,m-1$ (each appearing with multiplicity $1$). In particular, as $m\mid M$, the digraph $\cY'_j $ corresponding to the $M$-th power of $\cY$ restricted to $V_j$ for a given $j$ (that is, $A_{\cY'_j} = A_{\cY}^M\restriction_{V_j}$) has a single multiplicity for the Perron eigenvalue $k' = k^M$ (the out-degree) and all of its other eigenvalues lie within the ball of radius $k^{M/2} = \sqrt{k'}$.
 	Therefore, the inequality~\eqref{eq-dtv-tsar} is valid for the digraph $\cY'$ at $t'_\star$ given by
 	\[ t'_\star = \lceil \log_{k'} n + (2r-1+\alpha)\log_{k'}\log n\rceil = \bigl\lceil M^{-1} \log_k n + M^{-1}(2r-1+\alpha)\log_k \log n\bigr\rceil.
 	\]
 	The proof of~\eqref{eq-dtv-tsar-period-M} now follows from the fact that $t'_\star$ steps in $\cY'$ correspond to $M t'_\star \leq t_\star + M $ steps of the random walk in the original digraph $\cY$, and the $L^2$-distance is monotone non-increasing. Consequently, recalling that  $(Y_t)$ is the random walk that is modified in its first step to walk according to $A_\cY^j$ where $j$ is uniformly chosen over  $1,\ldots,M$, its distance from the uniform distribution at time $t_\star+M= \log_k n + (2r-1+\alpha+o(1))\log_k \log n$ is $o(1$), as claimed.
\end{proof}

\begin{proof}[\textbf{\emph{Proof of Theorem~\ref{thm:distances}}}]
Fix $\epsilon > 0$. By Proposition~\ref{prop:cutoff-digraph}, the simple random walk $Y_t$ on the digraph $\cY$, modified in its first steps to perform $U_0\sim \mathrm{Uniform}(\{1,\ldots,M\})$ steps instead of a single one, satisfies that $\max_{x_0} \left\|\P_{x_0}(Y_{t_\star}\in\cdot)-\pi\right\|_\tv = o(1)$ at $t_\star$ from~\eqref{eq-tstar} (see~\eqref{eq-dtv-tsar}). In particular, for every initial vertex $x_0$, there is a directed path of length at most $t_\star +M$ from $x_0$ to all but at most $o(n)$ vertices of $\cY$. Applying the same argument on the reverse digraph --- the one whose adjacency matrix is $A_{\cY}^{\textsc{t}} = A_{\cY}^*$, which is again $k$-regular, $r$-normal and Ramanujan --- shows that for every $x_0$, there is a directed path of length at most $t_\star +M$ to $x_0$ from all but $o(n)$ vertices of $\cY$. Altogether, the diameter of $\cY$ is at most $2(t_\star + M)$. 
\end{proof}

\section{\label{sec:Ramanujan-complexes}Random walks on Ramanujan complexes}

In this section we establish Theorem~\ref{thm:T-collision-free-r-normal-Ramanujan}, showing that  collision-free operators on affine buildings
induce $r$-normal Ramanujan digraphs on their Ramanujan quotients;
this will be achieved by Propositions~\ref{prop:Ramanujan-L2e},~\ref{prop:T-collision-free-Ramanujan}
and~\ref{prop:YT-collision-free-r-normal}. We will then combine these results with Proposition~\ref{prop:cutoff-digraph} to conclude
the proof of Theorem~\ref{thm:Ramanujan-cutoff}.

Recalling the definition of a Ramanujan complex in~\eqref{eq:ram-def},
let us mention that for the special case where $G$ is of type $\widetilde{A}_{n}$
the definition here is a priori stronger than the one given in~\cite{Lubotzky2005a,li2004ramanujan}.
There the authors used a maximal compact subgroup $K$ ---
a stabilizer of a vertex --- instead of $\mathcal{I}.$
As $K$ contains $\mathcal{I}$, the definition here may be stronger.
But as observed in~\cite{first2016ramanujan}, the explicit examples
constructed in~\cite{Lubotzky2005b} are Ramanujan also with this
stronger definition, since the work of Lafforgue~\cite{lafforgue2002chtoucas},
upon which~\cite{Lubotzky2005b} is based, is valid for $L$-spherical
representations for every compact open subgroup $L$. In fact, we
do not know any example of $X$ which is Ramanujan in one sense and
not in the other (and for $\widetilde{A}_{d}$, $d=1,2$ these notions
are indeed known to be equivalent --- see~\cite{kang2010zeta}).
Anyway, the stronger definition we use here seems to be the ``right''
one (see~\cite{first2016ramanujan} for a thorough discussion) and
certainly the one which makes sense for all simple groups $G$; Note
that for all $G$'s as above, the Iwahori subgroups are uniquely determined
up to conjugacy, but the maximal compact subgroups are not.

This stronger definition allows us to deduce bounds on the norms of
essentially all the combinatorial/geometric operators acting on $X$,
not only those defined in terms of its $1$-skeleton, as done in~\cite{Lubotzky2005a}.
Sometimes,~\cite{FGL+11,Evra2014mixing} are good examples, the information
on the $K$-spherical representations suffices even for the study
of high dimensional issues, as $X$ is a clique complex which is completely
determined by its $1$-skeleton. Still, in the current paper we make
essential use of operators acting on the higher dimensional cells,
and we do need the full power of~\eqref{eq:ram-def}. 
%Indeed, the
%Ramanujan condition~\eqref{eq:ram-def} relates the nontrivial spectrum
%of $T$ on $X$ to the spectrum of $T$ on $\mathcal{B}$, as follows.
\begin{proposition} \label{prop:Ramanujan-L2e}If $T$ is a $G$-equivariant
branching operator on $\fC\subset\mathcal{B}$, and $X=\Gamma\backslash\mathcal{B}$
is Ramanujan, then every nontrivial eigenvalue of $T$ acting on $\fX=\Gamma\backslash\fC$
is in the $2^{+}$-point spectrum of $T$ acting on $\fC$.
\end{proposition}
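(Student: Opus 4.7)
The plan is to use matrix coefficients of tempered representations to transfer a nontrivial $\lambda$-eigenfunction of $T$ on the finite quotient $\fX$ into an eigenfunction of $T$ on $\fC$ lying in $\bigcap_{\varepsilon > 0} L^{2+\varepsilon}(\fC)$. First I would set up the framework: since $\fC \subseteq \cB$ is $G$-invariant, it decomposes into finitely many $G$-orbits, each of the form $G/P_i$ with $P_i$ a parahoric subgroup (the stabilizer of a cell of the appropriate type); conjugating inside $G$, we may assume $P_i \supseteq \mathcal{I}$. Hence $\fX \cong \bigsqcup_i \Gamma\backslash G/P_i$, and the eigenfunction $f$ lifts to a tuple $F = (F_i)$ with $F_i \in L^2(\Gamma\backslash G)^{P_i}$; being $G$-equivariant, $T$ acts on such lifts through a (matrix of) Hecke-type convolution operator(s). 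Since $\Gamma$ is cocompact, $L^2(\Gamma\backslash G) = \bigoplus_{\pi} m_\pi\,\pi$ decomposes as a discrete Hilbert sum of irreducibles with finite multiplicities, and projecting yields an isotypic component $F_\pi = (F_{i,\pi})$ with $TF_\pi = \lambda F_\pi$ in each $\pi$.

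The assumption that $\lambda$ is \emph{nontrivial} means some nontrivial $\pi$ contributes. As $G$ is a simple noncompact group, any nontrivial unitary irreducible representation is infinite-dimensional; the nonzero $P_i$-fixed vector in such a $\pi$ (together with $P_i \supseteq \mathcal{I}$) makes $\pi$ $\mathcal{I}$-spherical, so the Ramanujan hypothesis~\eqref{eq:ram-def} forces $\pi$ to be tempered. By admissibility, $\pi^{P_\bullet} := \bigoplus_i \pi^{P_i}$ is finite-dimensional and $T$-invariant, and the $\lambda$-eigenvector $F_\pi$ in the isotypic component forces $\lambda$ to be a genuine eigenvalue of $T$ on $\pi^{P_\bullet}$; choose $v = (v_i) \in \pi^{P_\bullet}$ with $Tv = \lambda v$, and fix $i_0$ with $v_{i_0} \neq 0$.

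Now define $\varphi_i(g) := \langle \pi(g) v_i, v_{i_0}\rangle$. Each $\varphi_i$ is right $P_i$-invariant, hence descends to $G/P_i$, and $\varphi = (\varphi_i)$ is a nonzero function on $\fC$ since $\varphi_{i_0}(e) = \|v_{i_0}\|^2 > 0$. A direct computation, substituting the action of $T$ on $\fC$ and pulling $\pi(g)$ outside the resulting sum, verifies $T\varphi = \lambda\varphi$. Finally, by Haagerup~\cite{Haagerup1988}, matrix coefficients of tempered irreducibles of a semisimple $p$-adic group lie in $L^{2+\varepsilon}(G)$ for every $\varepsilon > 0$, and since each $P_i$ is compact open this transfers to $\varphi_i \in L^{2+\varepsilon}(G/P_i)$ with counting measure; hence $\varphi \in \bigcap_{\varepsilon > 0}L^{2+\varepsilon}(\fC)$ exhibits $\lambda$ as an element of the $2^+$-point spectrum of $T$ acting on $\fC$. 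The main analytic input is Haagerup's $L^{2+\varepsilon}$ estimate, which we invoke as a black box; the chief bookkeeping obstacle is the multi-orbit character of $\fC$ — since $T$ may mix the orbits $G/P_i$, one must work throughout with the direct sum $\bigoplus_i \pi^{P_i}$ and a matrix of Hecke operators rather than a single Hecke algebra on a single $\pi^P$.
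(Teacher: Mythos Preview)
Your proposal is correct and follows essentially the same route as the paper: decompose $\fC$ into $G$-orbits with parahoric stabilizers $P_i\supseteq\mathcal{I}$, identify $L^2(\fX)$ with $\bigoplus_i L^2(\Gamma\backslash G)^{P_i}$, use the discrete decomposition of $L^2(\Gamma\backslash G)$ to locate the $\lambda$-eigenfunction inside some infinite-dimensional $\mathcal{I}$-spherical (hence tempered) irreducible, form matrix coefficients $\varphi_i(g)=\langle\pi(g)v_i,v_{i_0}\rangle$ to obtain a $\lambda$-eigenfunction on $\fC$, and invoke the Cowling--Haagerup--Howe $L^{2+\varepsilon}$ bound. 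The only point where you are slightly looser than the paper is the sentence ``As $G$ is a simple noncompact group, any nontrivial unitary irreducible representation is infinite-dimensional'': the paper's standing hypothesis is that the \emph{algebraic} group $\mathbf{G}$ is simple, so $G=\mathbf{G}(F)$ may admit nontrivial one-dimensional characters (e.g.\ $\PGL_m$), and the paper accordingly \emph{defines} the trivial eigenvalues as those arising from the finite-dimensional summands rather than from the trivial representation alone.
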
 \begin{remark} The trivial eigenvalues of $T$
are the ones which arise from one-dimensional representations of $G$,
as explained in the proof. In the case that $G$ is simple (which
one can assume, without loss of generality), an eigenvalue is trivial
if and only if the corresponding eigenfunction, when lifted from $\Gamma\backslash\fC$
to $\fC$, is $G$-invariant. \end{remark} \begin{proof} Let $\Sigma$
be a collection of faces of the chamber $\sigma_{0}$ such that $\fC=\coprod_{\sigma\in\Sigma}G\sigma$,
and denote by $P_{\sigma}$ the $G$-stabilizer of each $\sigma\in\Sigma$.\footnote{The proof  is simpler in the special case where $T$ acts only on the top-dimensional cells, i.e., $\fC=\mathcal{B}\left(d\right)$
and $\Sigma=\left\{ \sigma_{0}\right\} $; the reader may want to
 first focus on that case of the proposition.} For a representation $V$ of $G$ denote 
\[
V^{\Sigma}=\bigoplus_{\sigma\in\Sigma}V^{P_{\sigma}}\leq\bigoplus_{\sigma\in\Sigma}V\,,
\]
which is a representation space of the algebra $M_{\Sigma\times\Sigma}\left(\mathbb{C}G\right)$.
Since $L^{2}\left(\Gamma\backslash G\sigma\right)\cong L^{2}\left(\Gamma\backslash G/P_{\sigma}\right)\cong L^{2}\left(\Gamma\backslash G\right)^{P_{\sigma}}$,
we can identify $L^{2}\left(\fX\right)$ with $L^{2}\left(\Gamma\backslash G\right)^{\Sigma}$,
and in particular for $\Gamma=1$ we obtain $L^{2}\left(\fC\right)\cong L^{2}\left(G\right)^{\Sigma}$.
Explicitly, every cell in $\fX=\Gamma\backslash\fC$ is of the form $\Gamma g\sigma$
for some $g\in G$ and $\sigma\in\Sigma$, and for $\left(f_{\sigma}\right)_{\sigma\in\Sigma}\in\bigoplus_{\sigma\in\Sigma}L^{2}\left(\Gamma\backslash G\right)^{P_{\sigma}}$
we have 
\begin{equation}
L^{2}\left(\fX\right)\cong L^{2}\left(\Gamma\backslash G\right)^{\Sigma}\qquad\text{by}\qquad f\left(\Gamma g\sigma\right)=f_{\sigma}\left(\Gamma g\right)\,.\label{eq:identification}
\end{equation}
Turning to the operator $T$, let $S^{\sigma,\tau}$ be subsets of
$G$, indexed by $\sigma,\tau\in\Sigma$, such that 
\[
T\left(\sigma\right)=\left\{ s\tau\,\middle|\,\tau\in\Sigma,s\in S^{\sigma,\tau}\right\} \,.
\]
Since $T$ is $G$-equivariant, this implies that $T\left(g\sigma\right)=\left\{ gs\tau\,\middle|\,\tau\in\Sigma,s\in S^{\sigma,\tau}\right\} $
for any $g\in G$ and $\sigma\in\Sigma$, and thus $A_{\cY_{T,\fX}}$
acts on $L^{2}\left(\fX\right)$ by 
\[
\left(A_{\cY_{T,\fX}}f\right)\left(\Gamma g\sigma\right)=\sum_{\tau\in\Sigma}\sum_{s\in S^{\sigma,\tau}}f\left(\Gamma gs\tau\right)\,.
\]
Thus, under the identification~\eqref{eq:identification}, $A_{\cY_{T,\fX}}$
acts as the element $\mathcal{A}\in M_{\Sigma\times\Sigma}\left(\mathbb{C}G\right)$
with $\left(\mathcal{A}\right)_{\sigma,\tau}=\sum_{s\in S^{\sigma,\tau}}s$
(in particular, $V^{\Sigma}$ is $\mathcal{A}$-stable). Since $\Gamma$
is cocompact, $L^{2}\left(\Gamma\backslash G\right)$ decomposes as
an orthogonal sum of unitary irreducible representations of $G$,
\[
L^{2}\left(\Gamma\backslash G\right)=\bigoplus_{i}V_{i}\oplus\bigoplus_{j}W_{j}\oplus\bigoplus_{k}U_{k}\,,
\]
where $V_{i}$ are the infinite-dimensional $\mathcal{I}$-spherical
representations which appear in $L^{2}\left(\Gamma\backslash G\right)$,
$W_{j}$ are the finite-dimensional ones, and $U_{k}$ are the ones
with no $\mathcal{I}$-fixed vectors. This implies 
\begin{equation}
L^{2}\left(\fX\right)\cong L^{2}\left(\Gamma\backslash G\right)^{\Sigma}=\bigoplus_{i}V_{i}^{\Sigma}\oplus\bigoplus_{j}W_{j}^{\Sigma}\,,\label{eq:L2(X)-decomposition}
\end{equation}
and there are only finitely many summands in this decomposition, as
$X$ is finite. Since $A_{\cY_{T,\fX}}$ coincides with an element in $M_{\Sigma\times\Sigma}\left(\mathbb{C}G\right)$,
it respects the decomposition~\eqref{eq:L2(X)-decomposition} and
thus the spectrum of $T$ on $\fX$ is 
\[
\mathrm{spec}\left(A_{\cY_{T,\fX}}\right)=\bigcup_{i}\mathrm{spec}\left(\mathcal{A}\big|_{V_{i}^{\Sigma}}\right)\bigcup_{j}\mathrm{spec}\left(\mathcal{A}\big|_{W_{j}^{\Sigma}}\right)\,.
\]
The trivial eigenvalues of $T$ on $\fX$ are by definition the ones
in $\bigcup_{j}\mathrm{spec}\bigl(\mathcal{A}\big|_{W_{j}^{\Sigma}}\bigr)$.
When $G$ is simple, $W_{j}$ is only the trivial representation,
but we are also interested in the case where the algebraic group $\mathbf{G}$
is simple, but the group $G=\mathbf{G}\left(F\right)$ is not, e.g.,
$\mathbf{G}=\PGL_{d+1}$. In this case, which is explored in \S\ref{sub:Geodesic-flow},
there can be trivial eigenvalues arising from non-trivial representations.
Every nontrivial eigenvalue $\lambda$ of $T$ on $X$ is obtained
as an eigenvalue of $\mathcal{A}$ acting on some eigenfunction $\left(f_{\sigma}\right)_{\sigma\in\Sigma}\in V_{i}^{\Sigma}$.
Let us fix $\xi\in\Sigma$ for which $f_{\xi}\neq0$, and for every
$\sigma\in\Sigma$ denote by $\varphi_{\sigma}:G\rightarrow\mathbb{C}$
the matrix coefficient 
\[
\varphi_{\sigma}\left(g\right)=\left\langle gf_{\sigma},f_{\xi}\right\rangle =\int_{\Gamma\backslash G}f_{\sigma}\left(\Gamma xg\right)\overline{f_{\xi}\left(\Gamma x\right)}d\left(\Gamma x\right)\,.
\]
Since $f_{\sigma}$ is $P_{\sigma}$-invariant so is $\varphi_{\sigma}$,
which means that it defines a function on $G\sigma$. We can patch
these functions together to form a function on $\fC$, namely $\varphi\left(g\sigma\right)=\varphi_{\sigma}\left(g\right)$
(for any $g\in G$ and $\sigma\in\Sigma$). The function $\varphi$
is a $\lambda$-eigenfunction of $T$ on $\fC$ since 
\begin{align*}
\left(A_{\cY_{T,\fX}}\varphi\right)\left(g\sigma\right) & =\sum_{\tau\in\Sigma}\sum_{s\in S^{\sigma,\tau}}\varphi\left(gs\tau\right)=\sum_{\tau\in\Sigma}\sum_{s\in S^{\sigma,\tau}}\varphi_{\tau}\left(gs\right)=\sum_{\tau\in\Sigma}\sum_{s\in S^{\sigma,\tau}}\left\langle gsf_{\tau},f_{\xi}\right\rangle \\
 & =\biggl\langle\sum_{\tau\in\Sigma}\mathcal{A}_{\sigma,\tau}f_{\tau},g^{-1}f_{\xi}\biggr\rangle=\left\langle \lambda f_{\sigma},g^{-1}f_{\xi}\right\rangle =\lambda\varphi_{\sigma}\left(g\right)=\lambda\varphi\left(g\sigma\right)\,,
\end{align*}
and it is not zero since $\varphi\left(\xi\right)=\varphi_{\xi}\left(e\right)=\left\Vert f_{\xi}\right\Vert ^{2}\neq0$.

It is left to show that $\varphi$ is in $L^{2+\varepsilon}\left(\fC\right)$
for all $\varepsilon>0$, and for this we need the Ramanujan assumption~\eqref{eq:ram-def}.
Fix a good maximal compact subgroup $K$ of $G$, and let $d_{\sigma}=\dim\mathrm{span}\left(Kf_{\sigma}\right)$.
Observe that $d_{\sigma}\leq\left[K:P_{\sigma}\right]<\infty$ since
$P_{\sigma}$ is open. Since $V_{i}$ is $\mathcal{I}$-spherical
(as $\mathcal{I}\leq P_{\sigma}$ for all $\sigma$) and infinite-dimensional,
by~\eqref{eq:ram-def} it is weakly-contained in the regular representation
of $G$. Thus, by~\cite[Theorem~2]{Haagerup1988},
\[
\left|\varphi_{\sigma}\left(g\right)\right|=\left|\left\langle gf_{\sigma},f_{\xi}\right\rangle \right|\leq\sqrt{d_{\sigma}d_{\xi}}\left\Vert f_{\sigma}\right\Vert \left\Vert f_{\xi}\right\Vert \Xi\left(g\right)\,,
\]
where $\Xi$ is the Harish--Chandra function, which is in $L^{2+\varepsilon}\left(G\right)$
for all $\varepsilon>0$ (loc.\ cit.). In particular, 
\begin{align*}
\left\Vert \varphi\right\Vert _{L^{2+\varepsilon}\left(\fC\right)}^{2+\varepsilon} & =\sum_{\sigma\in\Sigma}\sum_{g\in\nicefrac{G}{P_{\sigma}}}\left|\varphi_{\sigma}\left(g\right)\right|^{2+\varepsilon}=\sum_{\sigma\in\Sigma}\frac{1}{\mu\left(P_{\sigma}\right)}\int_{G}\left|\varphi_{\sigma}\left(g\right)\right|^{2+\varepsilon}dg\\
 & \leq\left(\sqrt{d_{\sigma}d_{\xi}}\left\Vert f_{\sigma}\right\Vert \left\Vert f_{\xi}\right\Vert \left\Vert \Xi\right\Vert _{2+\varepsilon}\right)^{2+\varepsilon}\sum_{\sigma\in\Sigma}\frac{1}{\mu\left(P_{\sigma}\right)}<\infty
\end{align*}
for every $\varepsilon>0$, so that $\lambda$ is in the $2^{+}$-point
spectrum of $T$ acting on $\fC$. \end{proof}

Before we move on to the trivial eigenvalues, we recall some terminology and facts regarding affine
buildings (cf.~\cite{Brown1989,Garrett1997}, for example).
Let $\mathcal{I}$ be the stabilizer of a fixed chamber $\sigma_{0}$
in $\mathcal{B}$, $\mathfrak{A}$ be an apartment which contains
$\sigma_{0}$, and $\mathcal{N}$ the stabilizer of $\mathfrak{A}$.
Let $\left(\mathcal{W},S\right)$ be the Coxeter system corresponding
to $\left(\mathfrak{A},\sigma_{0}\right)$, i.e., $\mathcal{W}=\mathcal{N}/\left(\mathcal{N}\cap\mathcal{I}\right)=\left\langle S\right\rangle $,
where $S$ is the set of reflections in the walls of $\sigma_{0}$.
When $G$ is simple, the affine Weyl group $\mathcal{W}$ acts simply-transitive
on the chambers in $\mathfrak{A}$. Letting $C_{g}$ denote the double
coset $\mathcal{I}g\mathcal{I}$, $G$ decomposes as a disjoint union
$G=\coprod_{w\in\mathcal{W}}C_{w}$, the so-called Iwahori--Bruhat
decomposition. Furthermore, $\mathcal{W}$ decomposes as $\mathcal{W}_{\mathrm{tr}}\rtimes W_{G}$
where $W_{G}$ is the (finite) spherical Weyl group of $G$, and $\mathcal{W}_{\mathrm{tr}}$
is the coroot lattice, which consists of all translation elements
in $\mathcal{W}$. 

\begin{proposition}\label{prop:T-collision-free-Ramanujan}Let $T$ denote
a collision-free, $G$-equivariant, $k$-regular branching operator
on $\fC\subseteq\mathcal{B}$, let $X=\Gamma\backslash\mathcal{B}$
be a Ramanujan complex, and let $\fX=\Gamma\backslash\fC$. 
Then the trivial eigenvalues of $T$ on $\fX$ all have modulus $k$.
\end{proposition}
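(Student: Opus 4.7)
The plan is to use the finite-dimensional reduction from the proof of Proposition~\ref{prop:Ramanujan-L2e}: the trivial eigenvalues of $T$ on $\fX$ are precisely the eigenvalues of $\mathcal{A}$ restricted to $\bigoplus_{j}W_{j}^{\Sigma}$, as $W_{j}$ ranges over the finite-dimensional irreducible $G$-subrepresentations of $L^{2}(\Gamma\backslash G)$. By the remark following the statement, one may reduce to the case where $G$ is simple; then Howe--Moore (together with the observation that matrix coefficients of any non-trivial finite-dimensional unitary representation of a non-compact simple group would have to both be algebraic and vanish at infinity) forces the only such $W_{j}$ to be the trivial representation. Consequently, trivial eigenfunctions, when lifted from $\fX$ to $\fC$, are precisely the $G$-invariant functions, i.e., functions on the finite set $\Sigma$ of $G$-orbit representatives. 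On this subspace, $\mathcal{A}$ acts as the non-negative integer matrix $M=(|S^{\sigma,\tau}|)_{\sigma,\tau\in\Sigma}$, whose row and column sums both equal $k$ by the $k$-regularity of~$T$.

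The heart of the argument is the following structural claim: for every $\sigma\in\Sigma$, the set $T(\sigma)$ lies in a single $G$-orbit $G\tau(\sigma)$, so that $|S^{\sigma,\tau(\sigma)}|=k$ and $M$ equals $k$ times the permutation matrix of the map $\tau:\Sigma\to\Sigma$. Granting this, the eigenvalues of $M$ are $k$ times the (unimodular) eigenvalues of a permutation matrix, and hence every eigenvalue has modulus $k$ as required. (For the handling of non-simple $G$ with a general character $\chi:G\to U(1)$, the same scheme produces a matrix $(\tilde{\mathcal{A}}_{\chi})_{\sigma,\tau}=\sum_{s\in S^{\sigma,\tau}}\chi(s)$ on $\Sigma'_\chi := \{\sigma\in\Sigma:\chi|_{P_\sigma}=1\}$ which, by the same structural claim and $|\chi|=1$, is $k$ times a monomial unitary matrix.)

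The main obstacle is proving the structural claim. The plan is a proof by contradiction, leveraging (from \S\ref{sub:branching-on-simple-gps}) the parameterization $T=T_{w_{0}}$ by an element $w_{0}$ of the affine Weyl group $\cW$, together with the Iwahori--Bruhat decomposition $G=\coprod_{w\in\cW}C_{w}$. If $T(\sigma)$ met two distinct orbits $G\tau_{1},G\tau_{2}$ simultaneously, then $G$-equivariance propagates this branching upon iteration, so that the two sub-trees of $\cY_{T,\fC}$ rooted at $\sigma$ follow geometrically prescribed trajectories in the building; applying $T$ sufficiently many times, these two trajectories must eventually converge at a common vertex, producing two distinct directed paths from $\sigma$ and thus violating the collision-free hypothesis. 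As a sanity check, when $\Sigma=\{\sigma_{0}\}$ (the chamber case, where $P_{\sigma_0}=\mathcal{I}$), $S^{\sigma_{0},\sigma_{0}}$ is a complete set of $\mathcal{I}$-coset representatives in the double coset $C_{w_{0}}=\mathcal{I}w_{0}\mathcal{I}$, so $|S^{\sigma_{0},\sigma_{0}}|=[C_{w_{0}}:\mathcal{I}]=k$ holds automatically, and the $1\times 1$ matrix $\tilde{\mathcal{A}}_{\chi}=k\,\chi(w_{0})$ visibly has modulus $k$.
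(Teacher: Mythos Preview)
Your overall scheme is the same as the paper's: reduce to simple $G$, identify the trivial block with the adjacency matrix $M$ of the quotient digraph $\cY_{T,\Sigma}$ on $\Sigma\cong G\backslash\fC$, and prove the structural claim that each $\sigma\in\Sigma$ has a unique out-neighbor with multiplicity $k$, so that $M$ is $k$ times a permutation matrix. Your remark that for non-simple $G$ one gets $k$ times a monomial unitary matrix is a nice alternative to the paper's ``restrict to a simple finite-index subgroup and enlarge $\Sigma$'' maneuver.

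However, your proof of the structural claim has a genuine gap. First, the parameterization $T=T_{w_{0}}$ from \S\ref{sub:branching-on-simple-gps} applies only to operators on \emph{chambers}; the present proposition is stated for arbitrary $G$-invariant $\fC\subseteq\cB$, so you cannot invoke that classification here. More seriously, the sentence ``applying $T$ sufficiently many times, these two trajectories must eventually converge at a common vertex'' is the entire content of the argument and is not justified. A priori two branches of a collision-free digraph never meet---that is the definition---so you must exhibit a concrete mechanism forcing a collision.

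The paper supplies this mechanism via three ingredients you are missing. First, since $T$ is $k$-regular the finite digraph $\cY_{T,\Sigma}$ is Eulerian; hence if $\sigma$ has two distinct out-neighbors $\tau_{1},\tau_{2}$ one can find directed cycles in $\cY_{T,\Sigma}$ beginning with $\sigma\to\tau_{i}$, of lengths $m_{i}$, which yield $g_{i}\in G$ with $g_{i}\sigma\in T^{m_{i}}(\sigma)$ (this is how one \emph{returns to the orbit} $G\sigma$, a step your sketch lacks). Second, Iwahori--Bruhat lets one replace $g_{i}$ by $w_{i}\in\cW$ with $w_{i}\sigma\in T^{m_{i}}(\sigma)$ and $w_{1}\sigma\neq w_{2}\sigma$ distinct children of $\sigma$. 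Third---and this is the point your ``eventually converge'' hides---taking $m=|W_{G}|$ puts $w_{i}^{m}$ in the \emph{abelian} translation lattice $\cW_{\mathrm{tr}}$, so $w_{1}^{m}w_{2}^{m}\sigma=w_{2}^{m}w_{1}^{m}\sigma$ is a single cell that is simultaneously a $T$-descendant of $w_{1}\sigma$ and of $w_{2}\sigma$, contradicting collision-freeness. Without the Eulerian cycles and the commutativity of $\cW_{\mathrm{tr}}$, there is no reason the two branches should ever meet.
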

\begin{proof}
Recall that the trivial eigenvalues of $T$ on $\fX$  correspond to $\bigcup_{j}\mathrm{spec}\bigl(\mathcal{A}\big|_{W_{j}^{\Sigma}}\bigr)$,
in the notation of the previous proof. Each $W_{j}$ is a one-dimensional
representation of $G$, so that $\mathcal{A}$ acts on $W_{j}^{\Sigma}$
by a $\left|\Sigma\right|\times\left|\Sigma\right|$-matrix. At this
point we assume that $G$ is simple, by restricting it to a finite index subgroup
if necessary, and enlarging $\Sigma$ accordingly. This incurs no
loss of generality, as $T$ is equivariant with respect to the smaller
group as well. Now, in the decomposition of $L^{2}\left(\Gamma\backslash\mathfrak{C}\right)$
in \eqref{eq:L2(X)-decomposition}, each $W_{j}$ is the trivial representation,
thus 
\[
\left(\mathcal{A}\big|_{W_{j}^{\Sigma}}\right)_{\sigma,\tau}=\#\left\{ \tau'\in T\left(\sigma\right)\,\middle|\,\tau'\in G\tau\right\} \,.
\]
This is precisely the adjacency matrix of $\cY_{T,\Sigma}$, the digraph
corresponding to $T$ acting on $\Sigma\simeq G\backslash\mathfrak{C}$
(which can have loops and multiple edges). Thus, the trivial spectrum
of $T$ on $\mathfrak{X}$ is the spectrum of $T$ on $\Sigma$. We
will show that each vertex in $\cY_{T,\Sigma}$ has a unique out-neighbor,
repeating with multiplicity $k$. This implies that $\cY_{T,\Sigma}$
is a disjoint union of cycles, each repeating with multiplicity $k$.
It follows that the eigenvalues of $\cY_{T,\Sigma}$ are all of modulus
$k$.

Assume to the contrary that some $\sigma\in\Sigma$ has two different
out-neighbors $\tau_{1},\tau_{2}$ in $\cY_{T,\Sigma}$. This means
that in $\mathfrak{X}$ the set $T\left(\sigma\right)$ contains cells
$\tau'_{1},\tau'_{2}$ from the distinct orbits $G\tau_{1}$ and $G\tau_{2}$.
Since $\cY_{T,\Sigma}$ is Eulerian (the in-degree of each of vertices
equals its out-degree), there exists a (directed) cycle $C_{1}$ in
$\cY_{T,\Sigma}$ which begins with the edge $\sigma\rightarrow\tau_{1}$
(and ends at $\sigma$). Since $\cY_{T,\Sigma}\setminus E\left(C_{1}\right)$
is still Eulerian, there is also a cycle $C_{2}$ which begins with
$\sigma\rightarrow\tau_{2}$. Thus, for $m_{i}=\mathrm{length}\left(C_{i}\right)$
($i=1,2$) there exist $g_{1},g_{2}\in G$ such that $g_{i}\sigma\in T^{m_{i}-1}\left(\tau'_{i}\right)\subseteq T^{m_{i}}\left(\sigma\right)$.
Write $g_{i}=b_{i}w_{i}b_{i}'$ with $b_{i},b_{i}'\in\mathcal{I}$
and $w_{i}\in\mathcal{W}$, using the Iwahori--Bruhat decomposition.
Since $I \leq P_{\sigma}$, $b_{i}w_{i}\sigma=b_{i}w_{i}b_{i}'\sigma\in T^{m_{i}-1}\left(\tau'_{i}\right)$,
which implies that $w_{i}\sigma\in T^{m_{i}-1}\left(b_{i}^{-1}\tau'_{i}\right)$.
Note that $b_{1}^{-1}\tau'_{1}\neq b_{2}^{-1}\tau'_{2}$, since they
are not in the same $G$-orbit, and both are descendants of $\sigma$
since $b_{i}^{-1}\tau'_{i}\in T\left(b_{i}^{-1}\sigma\right)=T\left(\sigma\right)$.
Therefore, we must have $w_{1}\sigma\neq w_{2}\sigma$ by the collision-free
assumption. Taking $m=\left|W_{G}\right|$, we have $w_{1}^{m}w_{2}^{m}\sigma=w_{2}^{m}w_{1}^{m}\sigma$,
since $w_{i}^{m}$ lie in the abelian group $\mathcal{W}_{tr}$, and
we now show that this is a common descendant of $w_{1}\sigma$ and
$w_{2}\sigma$, arriving at a contradiction. Indeed, since 
$w_{i}\sigma=b_{i}^{-1}g_{i}\sigma\in T^{m_{i}}\left(b_{i}^{-1}\sigma\right)=T^{m_{i}}\left(\sigma\right)$
we have 
\begin{align*}
w_{1}^{m}w_{2}^{m}\sigma & \in w_{1}^{m}w_{2}^{m-1}T^{m_{2}}\left(\sigma\right)=w_{1}^{m}w_{2}^{m-2}T^{m_{2}}\left(w_{2}\sigma\right)\subseteq w_{1}^{m}w_{2}^{m-2}T^{2m_{2}}\left(\sigma\right)=\ldots\\
\ldots & \subseteq w_{1}^{m_{1}}T^{mm_{2}}\left(\sigma\right)=\ldots=w_{1}T^{\left(m-1\right)m_{1}+mm_{2}}\left(\sigma\right)=T^{\left(m-1\right)m_{1}+mm_{2}}\left(w_{1}\sigma\right)\,,
\end{align*}
and in a similar manner $w_{2}^{m}w_{1}^{m}\sigma\in T^{\left(m-1\right)m_{2}+mm_{1}}\left(w_{2}\sigma\right)$.\end{proof}

We next wish to show that when $T$ acts on $L^{2}\left(\fX\right)$,
it is $r$-normal for some $r$, depending only on $G$. To this end,
we need the following proposition. \begin{proposition}[{cf.~\cite[Prop. 2.6]{casselman1980unramified}}]
Every unitary irreducible representation $V$ of $G$ with $\mathcal{I}$-fixed
vectors can be embedded in a principal series representation. \end{proposition}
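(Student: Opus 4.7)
The plan is to combine Jacquet's functor with Frobenius reciprocity for parabolic induction. Fix a minimal parabolic (Borel) subgroup $P=TN$ of $G$ compatible with $\mathcal{I}$, so that $\mathcal{I}$ admits the Iwahori factorization $\mathcal{I}=(\mathcal{I}\cap\overline{N})(\mathcal{I}\cap T)(\mathcal{I}\cap N)$, where $\overline{N}$ is the unipotent radical of the opposite parabolic. The Jacquet module is
\[
V_{N}=V/V(N),\qquad V(N)=\mathrm{span}\left\{ v-nv\,:\,n\in N,\,v\in V\right\},
\]
which is a smooth representation of $T=P/N$.

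The first step, and the main obstacle, is to show that $V_N\ne 0$. This rests on Jacquet's Lemma adapted to $\mathcal{I}$: using the Iwahori factorization and the fact that for any $v\in V^{\mathcal{I}}$ the orbit under the contracting part of $T$ stabilizes modulo $V(N)$, one shows that the canonical projection $V\twoheadrightarrow V_N$ restricts to a surjection
\[
V^{\mathcal{I}}\;\twoheadrightarrow\;(V_N)^{\mathcal{I}\cap T}.
\]
Since $V^{\mathcal{I}}\neq 0$ by hypothesis, this forces $(V_N)^{\mathcal{I}\cap T}\neq 0$, so in particular $V_N\neq 0$. Proving this surjectivity is the crux: it uses a careful averaging over large compact subgroups of $\overline{N}$ combined with the smoothness of $V$, and is essentially Casselman's argument for the non-vanishing of Jacquet modules on representations with Iwahori-fixed vectors.

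Next, I would invoke Casselman's theorem on admissibility of Jacquet modules: since $V$ is admissible (being unitary irreducible with $\mathcal{I}$-fixed vectors), $V_N$ is a finite-dimensional representation of $T$. As $T$ is abelian, any finite-dimensional smooth $T$-module has a one-dimensional quotient; pick a character $\chi$ of $T$ and a surjection $V_N\twoheadrightarrow\chi$. Moreover, the $(\mathcal{I}\cap T)$-invariance we established ensures one can choose $\chi$ unramified.

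Finally, apply Frobenius reciprocity in its second-adjoint form,
\[
\mathrm{Hom}_{G}\!\left(V,\mathrm{Ind}_{P}^{G}(\chi\cdot\delta_{P}^{1/2})\right)\;\cong\;\mathrm{Hom}_{T}\!\left(V_{N},\chi\right),
\]
where $\delta_P$ is the modular character of $P$ (the exact normalization is the only bookkeeping issue, and depends on whether one uses normalized or unnormalized induction). The quotient map $V_N\twoheadrightarrow\chi$ produces a nonzero $G$-equivariant map $V\to\mathrm{Ind}_P^G(\chi\cdot\delta_P^{1/2})$, and irreducibility of $V$ forces this map to be injective. This realizes $V$ as a subrepresentation of a principal series, as required. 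The only substantive step is the non-vanishing $V_N\neq 0$; the rest is standard Hecke-algebra/parabolic-induction formalism.
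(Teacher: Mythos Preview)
The paper does not actually prove this proposition; it is stated with a citation to \cite[Prop.~2.6]{casselman1980unramified} and then used immediately to bound $\dim V^{\mathcal{I}}\le |W_G|$. Your sketch is essentially an outline of Casselman's own argument (Jacquet module plus Frobenius reciprocity), so in that sense it matches what the paper is invoking.

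One genuine logical slip, though: from the \emph{surjectivity} of $V^{\mathcal{I}}\twoheadrightarrow(V_N)^{\mathcal{I}\cap T}$ you cannot conclude that the target is nonzero---a surjection from a nonzero space onto the zero space is perfectly possible. What Casselman actually proves is that this map is a \emph{bijection}; it is the injectivity that carries $V^{\mathcal{I}}\neq 0$ over to $(V_N)^{\mathcal{I}\cap T}\neq 0$. With that correction (and the normalization bookkeeping you already flagged), the argument is the standard one and is correct.
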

Each principal series representation of $G$ is obtained by induction
from a character $\chi$ of a minimal parabolic subgroup $B$ (a Borel
subgroup): 
\[
\mathrm{Ind}_{B}^{G}\chi=\left\{ f:G\rightarrow\mathbb{C}\,\middle|\,f\left(bg\right)=\chi\left(b\right)f\left(g\right)\quad\forall b\in B,g\in G\right\} \,.
\]

Thus, if $f\in V^{\mathcal{I}}\hookrightarrow\bigl(\mathrm{Ind}_{B}^{G}\chi\bigr)^{\mathcal{I}}$
then $f$ satisfies $f\left(bgi\right)=\chi\left(b\right)f\left(g\right)$
for every $b\in B,g\in G,i\in\mathcal{I}$, and the decomposition
$G=\coprod_{w\in W_{G}}Bw\mathcal{I}$ (cf.~\cite[(19)]{casselman1980unramified})
shows that $\dim V^{\mathcal{I}}\leq\left|W_{G}\right|$. Thus, each
summand in the decomposition of $L^{2}\left(\fX\right)$
in~\eqref{eq:L2(X)-decomposition} is of dimension at most $\left|\Sigma\right|\left|W_{G}\right|$.
Since $T$ acts on $L^{2}\left(\fX\right)$ by an
element in the group algebra of $G$, it decomposes with respect to~\eqref{eq:L2(X)-decomposition},
and therefore it is $\left(\left|\Sigma\right|\left|W_{G}\right|\right)$-normal.
In particular, since $\left|\Sigma\right|\leq2^{\rank G+1}$ we arrive at the following.

\begin{proposition}\label{prop:YT-collision-free-r-normal}If $T$
is a $G$-equivariant branching operator on $\fC\subset\mathcal{B}$,
and $X=\Gamma\backslash\mathcal{B}$ is a finite quotient of $\mathcal{B}$,
then $A_{\cY_{T,\fX}}$ is $r$-normal for some $r=r_G$.\end{proposition}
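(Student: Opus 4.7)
My plan is to exploit the orthogonal decomposition~\eqref{eq:L2(X)-decomposition} of $L^2(\fX)$ into irreducible $G$-subrepresentations and show that every summand has dimension bounded by a constant $r_G$ depending only on $G$. Since $A_{\cY_{T,\fX}}$ corresponds to the element $\mathcal{A} \in M_{\Sigma \times \Sigma}(\C G)$ from the proof of Proposition~\ref{prop:Ramanujan-L2e}, it respects this decomposition, and Schur-triangularizing each summand in an orthonormal basis will yield a block upper-triangular presentation with blocks of size at most $r_G$.

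To bound the dimension of a summand $V^\Sigma = \bigoplus_{\sigma \in \Sigma} V^{P_\sigma}$ attached to an irreducible $\mathcal{I}$-spherical representation $V$, I would invoke the cited result of Casselman that every such $V$ embeds into a principal series representation $\mathrm{Ind}_B^G \chi$ for some character $\chi$ of a Borel subgroup $B$. The transformation rule $f(bgi) = \chi(b) f(g)$ for $f \in (\mathrm{Ind}_B^G \chi)^{\mathcal{I}}$, combined with the decomposition $G = \coprod_{w \in W_G} Bw\mathcal{I}$, forces $f$ to be determined by its values on a set of representatives of the finite spherical Weyl group $W_G$. This gives $\dim V^{\mathcal{I}} \leq |W_G|$; since $\mathcal{I} \leq P_\sigma$ for every $\sigma \in \Sigma$, we obtain $\dim V^{P_\sigma} \leq |W_G|$ and hence $\dim V^\Sigma \leq |\Sigma|\,|W_G|$.

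Next, since $\Sigma$ sits inside the face poset of a single chamber $\sigma_0$, one has $|\Sigma| \leq 2^{\rank G + 1}$, so setting $r_G := 2^{\rank G + 1}|W_G|$ produces a uniform bound depending only on $G$. The same bound applies trivially to the finite-dimensional summands $W_j^\Sigma$. Concatenating orthonormal Schur bases of the individual summands yields an orthonormal basis of $L^2(\fX)$ in which $A_{\cY_{T,\fX}}$ is block upper-triangular with blocks of size at most $r_G$, i.e.\ $r_G$-normal.

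The main obstacle is the input from non-commutative harmonic analysis: the uniform bound $\dim V^{\mathcal{I}} \leq |W_G|$ across \emph{all} $\mathcal{I}$-spherical irreducibles $V$ is not a combinatorial fact and rests on the nontrivial structural result of Casselman that such representations embed in principal series. Once that result is available, the rest is bookkeeping, together with a routine application of Schur's triangularization theorem inside each finite-dimensional invariant summand.
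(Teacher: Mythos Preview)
Your proposal is correct and follows essentially the same route as the paper: the paper likewise uses the orthogonal decomposition~\eqref{eq:L2(X)-decomposition}, Casselman's embedding into principal series, and the decomposition $G=\coprod_{w\in W_G}Bw\mathcal{I}$ to bound $\dim V^{\mathcal{I}}\leq|W_G|$, concluding that each summand has dimension at most $|\Sigma|\,|W_G|\leq 2^{\rank G+1}|W_G|$. One small remark: the Schur triangularization you perform inside each summand is superfluous for the statement---since the summands are already orthogonal and $\mathcal{A}$-invariant, any orthonormal basis adapted to them renders $A_{\cY_{T,\fX}}$ block-\emph{diagonal} (not merely block upper-triangular), which is exactly what $r$-normality requires.
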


\begin{proof}[\textbf{\emph{Proof of Theorem~\ref{thm:T-collision-free-r-normal-Ramanujan}}}]
By Proposition \ref{prop:Ramanujan-L2e}, every nontrivial
eigenvalue $\lambda$ of $T$ on $\mathfrak{X}$ is also in the $2^{+}$-point
spectrum of $T$ on $\mathfrak{C}$, so $\left|\lambda\right|\leq\sqrt{k}$ by Proposition~\ref{prop:collision-free-riemann}.
The trivial eigenvalues all have modulus $k$ by Proposition~\ref{prop:T-collision-free-Ramanujan}, thus $\cY_{T,\fX}$ is Ramanujan. 
Finally, Proposition~\ref{prop:YT-collision-free-r-normal} shows that $\cY_{T,\fX}$ is $r$-normal for some $r=r_G$. 
\end{proof}

\begin{proof}[\textbf{\emph{Proof of Theorem~\ref{thm:Ramanujan-cutoff}}}]
By Theorem~\ref{thm:T-collision-free-r-normal-Ramanujan}, the finite digraph
$\cY=\cY_{T,\fX}$ is Ramanujan as well as  $r$-normal for some $r=r_G$. It is also $k$-regular, being the quotient of the $k$-regular digraph $\cY_{T,\fC}$, and thus its stationary distribution $\pi$ is uniform.
Further note that the hypothesis $\max_{x,y}\rho(x,y)<\infty$ in Theorem~\ref{thm:Ramanujan-cutoff} says that $\cY$ is strongly-connected.
Finally, to bound the period of $\cY$, recall that all of its nontrivial
eigenvalues have modulus at most $\sqrt{k}$ (being Ramanujan),
and hence its period is bounded by the number of trivial eigenvalues (those
with modulus $k$), which is at most
$  |\nicefrac{G}{G'}||\Sigma|\leq|\nicefrac{G}{G'}|2^{\rank G+1}$, where $G'$ is the derived subgroup of $G$.

We may thus apply Proposition~\ref{prop:cutoff-digraph} to $\cY$ (e.g., taking $M:=\big(|\nicefrac{G}{G'}|2^{\rank G + 1}\big)!$ to be divisible by its period), establishing part~(i) of Theorem~\ref{thm:Ramanujan-cutoff}, whereas part~(ii) is an immediate consequence of it as in the proof of Theorem~\ref{thm:distances} in~\S\ref{sec:Random-walks-digraphs}.
\end{proof}

\begin{remark} Another way to explain the results of this section
is by considering $\mathcal{H=\mathcal{H}}\left(G,\mathcal{I}\right)$,
the Iwahori--Hecke algebra of $G$. This is the algebra
of all complex bi-$\mathcal{I}$-invariant functions on $G$ with
compact support, w.r.t.\ convolution. As $G$ acts on $L^{2}\left(\Gamma\backslash G\right)$
(from the right), $\mathcal{H}$ acts on $L^{2}\left(\Gamma\backslash G/\mathcal{I}\right)$
--- the space of complex functions on the chambers of $X$.

Every irreducible $G$-subrepresentation $U$ of $L^{2}\left(\Gamma\backslash G\right)$
with $U^{\mathcal{I}}\neq\left\{ 0\right\} ,$ induces an irreducible
representation of the Iwahori--Hecke algebra $\mathcal{H}$
on $U^{\mathcal{I}}$ which is a subspace of $L^{2}\left(\Gamma\backslash G\right)^{\mathcal{I}}=L^{2}\left(\Gamma\backslash G/\mathcal{I}\right)=L^{2}\left(X\left(d\right)\right)$,
where $X\left(d\right)$ is the set of $d$-dimensional cells of $X=\Gamma\backslash\mathcal{B}$.
Condition~\eqref{eq:ram-def} enables us to deduce that for operators
$T\in\mathcal{H}$, the nontrivial eigenvalues of $T$ acting on $X$
appear in the spectrum of $T$ acting on $L^{2}\left(\mathcal{B}(d)\right)$.
The irreducible representations of $\mathcal{H}$ are all of bounded
dimensions, which ensures that $T$ decomposes (w.r.t.\ a suitable
orthonormal basis) as a sum of blocks of bounded size. For a thorough
treatment of this approach, we refer the reader to~\cite{first2016ramanujan,Kamber2017LP}.
\end{remark}
  
\section{Collision-free operators on affine buildings}
\label{sec:col-free}

In this section we study collision-free branching operators on buildings.
In~\S\ref{sub:Geodesic-flow} we focus on the buildings associated
with $\PGL_{m}$, and explore a family of operators which originate
in the works of Li and Kang~\cite{kang2014zeta,kang2016riemann},
and can be thought of as non-Archimedean geodesic flows. In~\S\ref{sub:branching-on-simple-gps},
we give, for a general simple group $G$, a necessary and sufficient condition for a $G$-equivariant branching operator
on the chambers of the building of $G$, to be collision-free. 

\subsection{Geodesic flows on complexes of type $\widetilde{A}_{d}$}\label{sub:Geodesic-flow}

Let $F$ be a non-Archimedean local field with ring of integer $\mathcal{O}$,
uniformizer $\pi$, and residue field $\nicefrac{\mathcal{O}}{\pi\mathcal{O}}$
of order $q$. Let $G=\PGL_{m}\left(F\right)$, and fix the maximal
compact subgroup $K=\PGL_{m}\left(\mathcal{O}\right)$. The building
$\mathcal{B}=\mathcal{B}\left(\PGL_{m}\left(F\right)\right)$ is a
simplicial complex of dimension $d=m-1$, whose vertices correspond
to the $G/K$-cosets, and they are ``colored'' by the function 
\[
\col:\nicefrac{G}{K}\rightarrow\nicefrac{\mathbb{Z}}{m\mathbb{Z}},\qquad\col\left(gK\right)\equiv\mathrm{ord}_{\pi}\left(\det\left(g\right)\right)\Mod{m}\,.
\]
Each $g\in G$ corresponds to a homothety class of $\mathcal{O}$-lattices
in $F^{m}$, namely $g\mathcal{O}^{m}$. The vertices $g_{0}K,\ldots,g_{j}K$
form a cell in $\mathcal{B}$ if and only if each homothety class
$g_{i}\mathcal{O}^{m}$ can be represented by a lattice $L_{i}$,
so that, possibly after reordering, 
\[
L_{0}>L_{1}>\ldots>L_{j}>\pi L_{0}\,.
\]
The edges of $\cB$ are also colored: if $e=(x,y)$, then $\col(e)=\col(y)-\col(x)$ (as an element of the group $\Z/m\Z$). Note that, while the color of vertices is not $G$-invariant, the color of edges is.

The geodesic flow on $\mathcal{B}$ which we now define is a simplicial
analog of the geodesic flow on the unit bundle of a manifold, where
the role of the ``direction'' vector is played by a cell. Formally,
for every $1\leq j\leq m-1$, we define the ``$j$-th-unit bundle''
$UT^{j}\mathcal{B}$ as the set of pairs $\left(v\in \mathcal{B}\left(0\right),\sigma\in\mathcal{B}\left(j\right)\right)$
(where $\cB(j)$ is the set of $j$-dimensional cells) such that $v\in\sigma$, and the vertices in $\sigma$ are of colors
$\col\left(v\right),\col\left(v\right)+1,\ldots,\col\left(v\right)+j$.
In the next step of the flow, the basepoint $v$ is replaced by the
``next'' vertex in $\sigma$ (the one with color $\col\left(v\right)+1$).
A more difficult question is what happens to the ``direction''
$\sigma$, as there is no notion of parallel transport in the discrete
settings. In fact, there are several possible options, which is
why we obtain a branching operator, and not a deterministic flow.
\begin{definition}\label{def:geodesic-flow} The geodesic flow $T$
on $UT^{j}\mathcal{B}$ is defined as follows: $T(\sigma)$ for $\sigma=\left(v_{0},\left\{ v_{0},\ldots,v_{j}\right\} \right)\in UT^{j}\mathcal{B}$,
with $\col\left(v_{i}\right)\equiv\col\left(v_{0}\right)+i$, consists
of all the cells $\sigma'=\left(v_{1},\left\{ v_{1},\ldots,v_{j},w\right\} \right)\in UT^{j}\mathcal{B}$
such that 
\begin{enumerate}
\item $\col\left(w\right)\equiv\col\left(v_{1}\right)+j$ (the ``direction
vector'' is based at $v_{1}$), 
\item $\{v_{0},v_{1},\ldots,v_{j},w\}$ is not a cell (geodicity). 
\end{enumerate}
\end{definition} Note that $T$ is $G$-equivariant, so it defines
a branching operator on any quotient $X=\Gamma\backslash\mathcal{B}$.
For example, in dimension one, the geodesic flow on $UT^{1}X$ coincides
with the non-backtracking walk on the graph $X$, since any two neighbors
$v,w$ satisfy $\col\left(w\right)\equiv\col\left(v\right)+1$, and
there are no triangles, so (2) reduces to $w\neq v_{0}$. When
$X$ is of higher dimension, (2) also prevents the flow on $U T^{1}X$
from making two steps along the edges of a triangle, as there is a
``shorter route''.

Let us analyze the geodesic flow in the language of Section \ref{sec:Ramanujan-complexes}.
Denote by $v^{\left(j\right)}$ the vertex 
\[
v^{\left(j\right)}=\diag\left(1^{\times j},\pi^{\times\left(m-j\right)}\right)\cdot K\,,
\]
and by $\sigma^{\left(j\right)}$ the ordered cell 
\[
\sigma^{\left(j\right)}=\left[v^{\left(j\right)},v^{\left(j-1\right)}\ldots,v^{\left(1\right)},v^{\left(0\right)}\right]\,.
\]
Since $G$ acts transitively on $UT^{j}\mathcal{B}$, and the stabilizer
of a pair $\left(v,\sigma\right)$ is the point-wise stabilizer of
$\sigma$, we can identify the pair $\left(v,\sigma\right)$ with
the unique ordering $\left[v_{0},\ldots,v_{j}\right]$ of $\sigma$
with consecutive colors and $v_{0}=v$. Furthermore, if $P_{j}$ is
the point-wise stabilizer of $\sigma^{\left(j\right)}$, then $gP_{j}\mapsto\left(gv^{\left(j\right)},g\sigma^{\left(j\right)}\right)$
identifies $G/P_{j}$ with $UT^{j}\mathcal{B}$. Since $v^{\left(0\right)}$
corresponds to $\mathcal{O}^{m}$, the cells in $T\left(\sigma^{\left(j\right)}\right)$
are of the form $\left[v^{\left(j-1\right)}\ldots,v^{\left(1\right)},v^{\left(0\right)},w\right]$
with $w$ corresponding to a maximal sublattice of $\mathcal{O}^{m}$
which:\\
 (1) Contains $\diag\left(1^{\times\left(j-1\right)},\pi^{\times\left(m-j+1\right)}\right)\cdot\mathcal{O}^{m}$,
so that $\left[v^{\left(j-1\right)},\ldots,v^{\left(0\right)},w\right]$
is a cell.\\
 (2) Does not contain $\diag\left(1^{\times j},\pi^{\times\left(m-j\right)}\right)\cdot\mathcal{O}^{m}$,
so that the flow is geodesic.\\
 The sublattices which satisfy these two conditions are those of the
form 
\[
L_{a_{1},\ldots,a_{m-j}}=\left(\begin{array}{c|c|c}
~\Large{\mbox{\ensuremath{I_{j-1}}}}\vphantom{\Big)}~ & \\
\hline  & \pi & a_{1}\:\cdots\:a_{m-j}\\
\hline  &  & ~\Large{\text{\ensuremath{I_{m-j}}}}\vphantom{\Big)}\ 
\end{array}\right)\cdot\mathcal{O}^{m}\qquad\left(a_{i}\in\nicefrac{\mathcal{O}}{\pi\mathcal{O}}\cong\mathbb{F}_{q}\right)\,,
\]
which in particular shows that $T$ is $q^{m-j}$-out-regular. 
As $G$ acts transitively on $U T^j\cB$, it is in fact $q^{m-j}$-regular.
To
see that it is also collision-free, observe that the matrix 
\[
g_{a_{1},\ldots,a_{m-j}}=\left(\begin{array}{c|c|c}
 & ~\Large{\mbox{\ensuremath{I_{j-1}}}}\vphantom{\Big)}~\\
\hline \pi &  & a_{1}\:\cdots\:a_{m-j}\\
\hline  &  & ~\Large{\text{\ensuremath{I_{m-j}}}}\vphantom{\Big)}\ 
\end{array}\right)
\]
takes $v^{\left(i\right)}$ to $v^{\left(i-1\right)}$ for $1\leq i\leq j$,
and $v^{\left(0\right)}$ to the vertex corresponding to $L_{a_{1},\ldots,a_{m-j}}$.
In particular, this implies that $P_{j}g_{0,\ldots,0}=\left\{ g_{a_{1},\ldots,a_{m-j}}\,\middle|\,a_{i}\in\mathbb{F}_{q}\right\} $,
and it is now an easy exercise to verify that for any $\ell\in\mathbb{N}$
\[
\left(P_{j}g_{0,\ldots,0}\right)^{j\ell}=\left\{ \left(\begin{array}{ccc|c}
\pi^{\ell} &  &  & a_{1,1}\ \cdots\ a_{1,m-j}\\
 & \ddots &  & \vdots\quad\ddots\quad\ \vdots\ \\
 &  & \pi^{\ell} & a_{j,1}\ \cdots\ a_{j,m-j}\\
\hline  &  &  & ~\Large{\text{\ensuremath{I_{m-j}}}}\vphantom{\Big)}\ 
\end{array}\right)\;\middle|\;a_{x,y}\in\nicefrac{\mathcal{O}}{\pi^{\ell}\mathcal{O}}\right\} \,.
\]
Since these matrices carry $\mathcal{O}^{m}$ to non-equivalent lattices,
the last vertices in $T^{\ell j}\bigl(g\sigma^{\left(j\right)}\bigr)=g\left(P_{j}g_{0,\ldots,0}\right)^{\ell j}\sigma^{\left(j\right)}$
are all different, implying that $T^{j}$ is collision-free,
and hence so is $T$. We have thus shown: \begin{proposition} The $j$-dimensional
geodesic flow on $\mathcal{B}\left(\PGL_{m}\right)$ is collision-free.
\end{proposition}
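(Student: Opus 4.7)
My strategy is to exploit $G$-equivariance to reduce to a single base cell, work in the lattice model of $\mathcal{B}(\PGL_m)$, and then show by explicit matrix computation that the descendants of this base cell at a common level are all distinct. Concretely, since $T$ is $G$-equivariant, if every cell has the property that its $\ell$-step descendants are pairwise distinct for every $\ell$, then $T$ is collision-free; and this property depends only on the $G$-orbit of the cell. So I would fix a distinguished cell $\sigma^{(j)} = [v^{(j)}, v^{(j-1)}, \ldots, v^{(0)}]$, with $v^{(i)} = \diag(1^{\times i}, \pi^{\times(m-i)}) K$, and argue only about this cell.

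The first substantive step is to describe one flow-step combinatorially. Using $UT^j\mathcal{B} \cong G/P_j$, where $P_j$ is the pointwise stabilizer of $\sigma^{(j)}$, I would translate the two conditions from Definition~\ref{def:geodesic-flow} (simplex condition and geodicity) into the lattice language. The cell $\sigma^{(j)}$ corresponds to the decreasing chain $\mathcal{O}^m > \diag(1^{\times(j-1)}, \pi, 1^{\times(m-j)})\mathcal{O}^m > \ldots$, and a successor is determined by choosing a maximal sublattice $L$ that contains the deepest lattice in the chain (simplex) but does not contain $\diag(1^{\times j}, \pi^{\times(m-j)})\mathcal{O}^m$ (geodicity). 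A Hermite-normal-form calculation identifies the admissible $L$ with the $q^{m-j}$ lattices $L_{a_1, \ldots, a_{m-j}}$ of block-triangular shape appearing in the excerpt, i.e., one step is represented by the coset $P_j g_{0,\ldots,0} P_j$.

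The core of the proof is then an iteration. I would compute $(P_j g_{0,\ldots,0})^{\ell j}$ inductively: after $j$ steps the base vertex cycles through one full color period and the resulting matrix representatives have the diagonal block $\diag(\pi, 1^{\times(m-j-1)}, \pi, \ldots)$ collapsing to $\pi I_j$ (up to $P_j$), accumulating an extra $j \times (m-j)$ block of free parameters in $\mathcal{O}/\pi\mathcal{O}$. Iterating yields the claimed description of $(P_j g_{0,\ldots,0})^{\ell j}$ as $q^{\ell j (m-j)} = k^{\ell j}$ matrices parameterized by a $j \times (m-j)$ block with entries in $\mathcal{O}/\pi^\ell\mathcal{O}$. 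Because these matrices send $\mathcal{O}^m$ to pairwise non-homothetic lattices (the $j \times (m-j)$ block is, by construction, a complete set of representatives), the endpoint vertices of the $k^{\ell j}$ paths in $T^{\ell j}(\sigma^{(j)})$ are distinct, so $T^{\ell j}$ has no within-level collisions.

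To finish, I would deduce that $T$ itself is collision-free. Within a fixed level $\ell$, any coincidence of two distinct length-$\ell$ paths could be extended by any common length-$(j-1)\ell$ continuation to produce a coincidence of two distinct length-$\ell j$ paths, contradicting the previous step. Cross-level collisions are ruled out by tracking the color of the leading vertex (or equivalently $\operatorname{ord}_\pi \det$ modulo $m$), which advances by $1$ at each flow step. The main obstacle will be the explicit iteration $(P_j g_{0,\ldots,0})^{\ell j}$: keeping track of how multiplication on the left by $P_j$ interacts with successive copies of $g_{0,\ldots,0}$ requires carefully choosing coset representatives so that the $j \times (m-j)$ block of parameters accumulates cleanly rather than getting entangled with the diagonal part.
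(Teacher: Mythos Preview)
Your approach is essentially identical to the paper's: reduce to the base cell $\sigma^{(j)}$ via $G$-equivariance, describe one flow step via the matrices $g_{\vec a}$, compute $(P_j g_{0,\ldots,0})^{\ell j}$ explicitly, observe that the resulting matrices send $\mathcal{O}^m$ to pairwise non-homothetic lattices (so the endpoints at level $\ell j$ are all distinct), and then pass from $T^{\ell j}$ to $T$.

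There is one small gap in your last step. Your cross-level argument tracks the color of the leading vertex, which---as you yourself note---is only $\operatorname{ord}_\pi\det$ \emph{modulo} $m$. This shows that a collision between levels $\ell_1$ and $\ell_2$ forces $m\mid \ell_2-\ell_1$, but it does not exclude collisions with $\ell_2-\ell_1$ a nonzero multiple of $m$ (e.g.\ $m=3$, $j=2$, levels $1$ and $4$). To close this, note that for $h_\ell=g_{\vec a_1}\cdots g_{\vec a_\ell}$ the top-left $j\times j$ block is always $C^\ell$, where $C$ is the common top-left block of all the $g_{\vec a}$; one checks that $h_\ell\mathcal{O}^m$ intersected with the span of the first $j$ coordinates is $C^\ell\mathcal{O}^j$, a sublattice of index exactly $q^\ell$ in $\mathcal{O}^j$, while projection to the last $m-j$ coordinates forces any homothety between two such lattices to be trivial. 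Hence the last vertex at step $\ell$ determines $\ell$ outright, not just modulo $m$. The paper is comparably terse at this spot (it writes ``implying that $T^j$ is collision-free, and hence so is $T$'' without further comment), so this is a minor patch rather than a structural problem with your plan.
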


For the geodesic flow, we can give an explicit
bound for the normality of the adjacency operator, and for $j=1$
we show below that it is optimal. 

\begin{proposition} \label{prop:j-flow-(m)j-normal}The
$j$-dimensional geodesic flow on finite quotients of $\mathcal{B}\left(\PGL_{m}\right)$
is $\left(m\right)_{j}$-normal, where $\left(m\right)_{j}=m!/(m-j)!$.
\end{proposition}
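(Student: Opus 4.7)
The strategy is to sharpen the bound in Proposition~\ref{prop:YT-collision-free-r-normal} by exploiting two special features of the geodesic-flow situation: $\fC = UT^j\mathcal{B}$ is a single $G$-orbit, and $\dim V^{P_j}$ can be controlled more tightly than the crude bound $\dim V^{\mathcal{I}} \leq |W_G| = m!$ used in the generic argument. Specifically, I will show $\dim V^{P_j} \leq |W_G / W_{P_j}| = (m)_j$ for every irreducible $\mathcal{I}$-spherical $V$ appearing in $L^2(\Gamma\backslash G)$.

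Since $G$ acts transitively on $UT^j\mathcal{B}$, one takes $\Sigma = \{\sigma^{(j)}\}$ and identifies $\fX \cong \Gamma\backslash G / P_j$. The decomposition~\eqref{eq:L2(X)-decomposition} then reads $L^2(\fX) \cong \bigoplus_i V_i^{P_j} \oplus \bigoplus_k W_k^{P_j}$, and $T$, acting as an element of $\mathbb{C} G$ (since $|\Sigma|=1$), respects this decomposition. It thus suffices to bound the dimension of each summand by $(m)_j$. The finite-dimensional $W_k$'s are characters of $G$ factoring through $G/G'$, so each $W_k^{P_j}$ is at most one-dimensional and contributes trivially.

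For each infinite-dimensional $\mathcal{I}$-spherical summand $V_i$, Casselman's embedding places $V_i$ inside some $\mathrm{Ind}_B^G \chi$, whence $\dim V_i^{P_j} \leq \dim (\mathrm{Ind}_B^G \chi)^{P_j}$. A $P_j$-invariant function in $\mathrm{Ind}_B^G \chi$ is determined by its values on a system of representatives of $B \backslash G / P_j$. Because $\sigma^{(j)}$ is a face of the chamber $\sigma_0$, we have $\mathcal{I} \subseteq P_j$, and the decomposition $G = \coprod_{w \in W_G} B w \mathcal{I}$ used in~\S\ref{sec:Ramanujan-complexes} coalesces into the coarser Iwasawa--Bruhat decomposition $G = \coprod_{w \in W_G / W_{P_j}} B w P_j$, yielding $\dim V_i^{P_j} \leq |W_G/W_{P_j}|$.

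It remains to identify $W_{P_j}$ and compute the index. Reduction modulo $\pi$ maps $K = \PGL_m(\mathcal{O})$ to $\PGL_m(k)$, where $k$ is the residue field, and the image $\overline{P_j}$ of $P_j \subseteq K$ is the stabilizer of the partial flag $W_1 \subsetneq W_2 \subsetneq \cdots \subsetneq W_j$ in $k^m$ with $\dim W_i = i$, obtained as the reduction modulo $\pi$ of the lattice chain $L_i = \diag(1^{\times i},\pi^{\times(m-i)})\mathcal{O}^m$ defining $\sigma^{(j)}$. This is a standard parabolic of block type $(1, 1, \ldots, 1, m-j)$, whose Weyl group is $S_1^j \times S_{m-j} \cong S_{m-j}$, giving $|W_G/W_{P_j}| = m!/(m-j)! = (m)_j$, as required. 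The principal technical point is verifying that the $p$-adic double-coset count descends to the residue-field one, which follows from the standard fact that the pro-unipotent radical $K_1 = \ker(K \twoheadrightarrow \PGL_m(k))$ is contained in every parahoric sitting between $\mathcal{I}$ and $K$, and in particular in $P_j$.
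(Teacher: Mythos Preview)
Your proof is correct and follows essentially the same route as the paper: Casselman's embedding reduces the question to bounding $\dim(\mathrm{Ind}_B^G\chi)^{P_j}$, which is then identified with $|B\backslash G/P_j| = |W_G/W_{P_j}| = (m)_j$. The only real difference is in how the double-coset count is carried out: the paper writes down $P_j$ explicitly as a matrix subgroup (its equation~\eqref{eq:P_j}), reads off $W_j = \mathrm{Sym}_{\{j+1,\ldots,m\}}$ directly, proves the disjointness of the cosets $Bw_iP_j$ by a hands-on matrix argument, and even exhibits an explicit basis $\{f_i\}$ showing $\dim(\mathrm{Ind}_B^G\chi)^{P_j} = (m)_j$ exactly; you instead invoke the standard parabolic Iwasawa--Bruhat decomposition and identify $W_{P_j}$ via reduction mod~$\pi$ to the block-parabolic in $\PGL_m(k)$. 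Your argument is cleaner and more conceptual, while the paper's explicit basis is reused in the subsequent Proposition~\ref{prop:flow-not-m-1-normal} to show the bound is sharp for $j=1$.
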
 \begin{proof} Let $V$ be an irreducible representation
of $G=\PGL_{m}\left(F\right)$ with $V^{P_{j}}\neq0$. Let $B$ be
the standard Borel, consisting of all upper-triangular matrices in
$G$. By~\cite[Prop. 2.6]{casselman1980unramified} there exists
a character $\chi$ of $B$, which is trivial on $B\cap K$, and 
\begin{equation}
V^{P_{j}}\hookrightarrow\left(\mathrm{Ind}_{B}^{G}\chi\right)^{P_{j}}=\left\{ f:G\rightarrow\mathbb{C}\,\middle|\;f\left(bgp\right)=\chi\left(b\right)f\left(g\right)\quad\forall b\in B,p\in P_{j}\right\} \,.\label{eq:prin-ser}
\end{equation}
We will show that $\dim\bigl(\mathrm{Ind}_{B}^{G}\chi\bigr)^{P_{j}}=\left(m\right)_{j}$.
Denote by $W$ be the Weyl group of $G$, which consists of all permutation
matrices, and let $W_{j}=W\cap P_{j}$. Observe that 
\begin{align}
P_{j} & =\bigcap_{i=0}^{j}\diag\left(1^{\times i},\pi^{\times\left(m-i\right)}\right)\cdot K\cdot\diag\left(1^{\times i},\left(\nicefrac{1}{\pi}\right)^{\times\left(m-i\right)}\right)\nonumber \\
 & =\left\{ g\in K\,\middle|\,g_{r,c}\in\pi\mathcal{O}\text{ for }c\leq\min\left(j,r-1\right)\right\} \,,\label{eq:P_j}
\end{align}
which implies that $W_{j}=\mathrm{Sym}_{\left\{ j+1,\ldots,m\right\} }$.
If $w_{1},\ldots,w_{\left(m\right)_{j}}$ is a transversal for $W/W_{j}$,
we claim that $G=\coprod_{i=1}^{\left(m\right)_{j}}Bw_{i}P_{j}$.
Indeed, the decomposition $G=\coprod_{w\in W}Bw\mathcal{I}$
already implies that $G=\bigcup_{i}Bw_{i}P_{j}$, and we need to show
these are disjoint. If $w_{i}=bw_{k}p$ for some $b\in B$, $p\in P_{j}$,
then $b^{-1}=w_{k}pw_{i}^{-1}\in B\cap w_{k}Pw_{i}^{-1}\subseteq B\cap K$,
which implies that the diagonal entries of $b^{-1}$ are in $\mathcal{O}^{\times}$.
On the other hand, let $r>m-j$, and observe that $p_{r,c}=\left(w_{k}^{-1}b^{-1}w_{i}\right)_{r,c}=\left(b^{-1}\right)_{w_{k}\left(r\right),w_{i}\left(c\right)}$,
so that 
\[ p_{w_{k}^{-1}\left(w_{i}\left(c\right)\right),c}=\left(b^{-1}\right)_{w_{i}\left(c\right),w_{i}\left(c\right)}\in\mathcal{O}^{\times}\,.\]
By~\eqref{eq:P_j}, this implies $w_{k}^{-1}\left(w_{i}\left(c\right)\right)\leq c$
for $1\leq c\leq j$, and thus $w_{i}\left(c\right)=w_{k}\left(c\right)$
for $1\leq c\leq j$, hence $w_{i}W_{j}=w_{k}W_{j}$. This implies
$i=k$, and thus $G=\coprod_{i=1}^{\left(m\right)_{j}}Bw_{i}P_{j}$
as claimed. In addition, since $B\cap w_{i}P_{j}w_{i}^{-1}\subseteq B\cap K$
and $\chi\big|_{B\cap K}\equiv1$ the functions 
\[
f_{i'}\left(bw_{i}p\right)=\chi\left(b\right)\delta_{i,i'}\quad\left(b\in B,i,i'\in\left\{ 1,\ldots,\left(m\right)_{j}\right\} ,p\in P_{j}\right)
\]
are well defined, and thus form a basis for $\left(\mathrm{Ind}_{B}^{G}\chi\right)^{P_{j}}$.
\end{proof} 
\begin{proposition} \label{prop:flow-not-m-1-normal}
The one-dimensional geodesic flow
on Ramanujan quotients of $\mathcal{B}\left(\PGL_{m}\right)$ is not
$r$-normal for any $r<m$. \end{proposition}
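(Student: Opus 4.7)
The plan is to exhibit a Jordan block of size $m$ for the operator $T$ acting on $L^{2}(\fX)$ in some Ramanujan quotient $X=\Gamma\backslash\cB$. This suffices, because $r$-normality requires a unitary change of basis putting $T$ into block-diagonal form with blocks of size $\leq r$, and within any such block the Jordan structure has blocks of size $\leq r$; so a single size-$m$ Jordan block rules out $r$-normality for every $r<m$. By the decomposition~\eqref{eq:L2(X)-decomposition} together with the Casselman embedding $V\hookrightarrow\mathrm{Ind}_{B}^{G}\chi$ and the identification $\dim(\mathrm{Ind}_{B}^{G}\chi)^{P_{1}}=m$ from the proof of Proposition~\ref{prop:j-flow-(m)j-normal}, it is enough to exhibit an infinite-dimensional irreducible tempered $P_{1}$-spherical subrepresentation $V\subseteq L^{2}(\Gamma\backslash G)$ on which $T$ acts on $V^{P_{1}}$ with a single Jordan block of full size $m$.

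First, I would write the matrix $M(\chi)$ of $T$ acting on $(\mathrm{Ind}_{B}^{G}\chi)^{P_{1}}$ in the basis $\{f_{1},\ldots,f_{m}\}$ indexed by the coset representatives of $W/W_{1}$ constructed in the proof of Proposition~\ref{prop:j-flow-(m)j-normal}. Using the Iwahori--Matsumoto presentation of the affine Hecke algebra and the explicit set $S^{\sigma^{(1)},\sigma^{(1)}}$ of coset representatives appearing in~\S\ref{sub:Geodesic-flow}, this produces $M(\chi)$ as an explicit matrix depending rationally on the Satake parameters $(z_{1},\ldots,z_{m})$ of $\chi$ (with $\prod_{i}z_{i}$ equal to a fixed root of unity, reflecting the passage to $\PGL$). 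Next, I would locate a unitary configuration $\chi_{\star}$ --- so that $V_{\star}:=\mathrm{Ind}_{B}^{G}\chi_{\star}$ is tempered --- at which all $m$ eigenvalues of $M(\chi)$ collapse to a single $\lambda_{\star}$ on the critical circle $|\lambda|=\sqrt{k}=q^{(m-1)/2}$, and such that $(M(\chi_{\star})-\lambda_{\star}I)^{m-1}\neq 0$ so the nilpotency index is exactly $m$. A natural candidate, generalizing the case $m=2$ of non-backtracking walks on Ramanujan graphs, is the evenly-spaced configuration $z_{i}=\omega\zeta^{i}$ with $\zeta$ a primitive $m$-th root of unity and $\omega$ a normalizing phase chosen so that $\prod z_i$ matches the required root of unity.

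Finally, I would invoke the arithmetic constructions of Ramanujan complexes in~\cite{Lubotzky2005b,first2016ramanujan} to produce a cocompact torsion-free $\Gamma$ for which $V_{\star}$ embeds in $L^{2}(\Gamma\backslash G)$ with positive multiplicity and $X=\Gamma\backslash\cB$ is Ramanujan in the sense of~\eqref{eq:ram-def}; temperedness of $V_{\star}$ ensures this is compatible with~\eqref{eq:ram-def}. The size-$m$ Jordan block of $M(\chi_{\star})$ then lifts, via the identification~\eqref{eq:identification}, to a size-$m$ Jordan block of $T$ on $L^{2}(\fX)$. The principal obstacle is the explicit Jordan-structure computation for $M(\chi_{\star})$ in the second step: one must simultaneously arrange $\chi_{\star}$ to be unitary, force a total collision of the $m$ eigenvalues, and then verify that the nilpotent part is a single Jordan block rather than a direct sum of smaller blocks. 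The latter requires careful bookkeeping of the Hecke-algebra action on the principal-series basis --- in effect, checking that at the critical parameter the $T$-invariant subspaces in $(\mathrm{Ind}_{B}^{G}\chi_{\star})^{P_{1}}$ form a complete flag rather than splitting off a proper invariant complement.
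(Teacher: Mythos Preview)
There is a genuine gap in your plan, concentrated in step two. You need a unitary $\chi_\star$ for which $T$ acts on $(\mathrm{Ind}_B^G\chi_\star)^{P_1}$ by a single Jordan block of size $m$, and then you need this specific $V_\star$ actually to occur in $L^2(\Gamma\backslash G)$ for some Ramanujan $\Gamma$. Neither part works as written.

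First, the matrix of $T$ on $(\mathrm{Ind}_B^G\chi)^{P_1}$ in the basis from Proposition~\ref{prop:j-flow-(m)j-normal} is lower-triangular with diagonal entries $q^{(m-1)/2}z_1,\ldots,q^{(m-1)/2}z_m$ (this is exactly the matrix~\eqref{eq:block} computed in the paper). Your ``evenly-spaced'' candidate $z_i=\omega\zeta^i$ therefore gives $m$ \emph{distinct} eigenvalues and hence a diagonalizable block with no nontrivial Jordan structure at all. It does not generalize the $m=2$ NBRW picture either: there the Jordan block occurs at $z_1=z_2=\pm1$, not at $z_1=-z_2$. Forcing a size-$m$ Jordan block requires $z_1=\cdots=z_m$, at which point $\mathrm{Ind}_B^G\chi$ is reducible and the Casselman embedding no longer identifies an irreducible $V^{P_1}$ with the full $m$-dimensional space.

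Second, even granting a correct $\chi_\star$, you cannot simply ``invoke'' an arithmetic $\Gamma$ with $V_\star\subseteq L^2(\Gamma\backslash G)$. The Satake parameters that occur in the automorphic spectrum are Hecke eigenvalues; prescribing a specific boundary point of the tempered spectrum and demanding it be realized is not a consequence of the constructions in~\cite{Lubotzky2005b,first2016ramanujan}. This also means your argument would at best handle one carefully chosen quotient, whereas the proposition is asserted for Ramanujan quotients in general.

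The paper sidesteps all of this by a different route: it computes~\eqref{eq:block} for a \emph{generic} unitary $\chi$, with respect to an \emph{orthonormal} basis, and obtains a lower-triangular matrix in which every strictly sub-diagonal entry is nonzero. Any Ramanujan quotient with more than $m$ vertices carries many infinite-dimensional $K$-spherical constituents, each contributing such a block; the dense triangular shape then obstructs (via uniqueness of the $QR$/Schur form) any unitary block-diagonalization into pieces of size $<m$. No eigenvalue collision, no special $\chi_\star$, and no control over which representations appear are needed.
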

 \begin{proof} Let
$z_{1},\ldots,z_{m}\in\mathbb{C}$ with $\left|z_{i}\right|=1$ for
all $i$ and $\prod_{i=1}^{m}z_{i}=1$, and let 
\[
\chi:B\rightarrow\mathbb{C}^{\times},\qquad\chi\left(\left(b_{ij}\right)\right)=\prod_{i=1}^{m}\left(q^{i-\frac{m+1}{2}}z_{i}\right)^{\mathrm{ord}_{\pi}\left(b_{ii}\right)}\,.
\]
The representation $\mathrm{Ind}_{B}^{G}\chi$ is irreducible, unitary
and $K$-spherical. Furthermore, if $X=\Gamma\backslash\mathcal{B}$
is Ramanujan, then every $K$-spherical infinite-dimensional subrepresentation
of $L^{2}\left(\Gamma\backslash G\right)$ is of this form (cf.~\cite{Lubotzky2005a}).
Since $L^{2}\left(X\left(0\right)\right)\cong L^{2}\left(\Gamma\backslash G\right)^{K}$ and
$\dim\left(\mathrm{Ind}_{B}^{G}\chi\right)^{K}=1$, the number of such representations which appear in $L^{2}\left(\Gamma\backslash G\right)$
is the number of vertices in $\Gamma\backslash\mathcal{B}$, minus
the number of one-dimensional subrepresentations of $L^{2}\left(\Gamma\backslash G\right)$,
which is bounded by $m$. By the proof of Proposition \ref{prop:j-flow-(m)j-normal},
$\left(\mathrm{Ind}_{B}^{G}\chi\right)^{P_{1}}$ is $m$-dimensional,
with basis $f_{1},\ldots,f_{m}$ defined by 
\[
f_{i}\left(bwp\right)=\chi\left(b\right)\delta_{w\left(1\right),i}\qquad\left(b\in B,w\in S_{m},p\in P_{1}\right)\,.
\]
This is an orthogonal basis, albeit not orthonormal ---
one can verify that $\left\Vert f_{i}\right\Vert =q^{i/2}$. Recall
that the geodesic flow acts by $\mathcal{A}=\sum_{\vec{a}\in\mathbb{F}_{q}^{m-1}}g_{\vec{a}}$,
where $g_{\vec{a}}$ is the identity matrix with the first row replaced
by $\left(\pi,a_{1},a_{2},\ldots,a_{m-1}\right)$. Since $g_{\vec{a}}\in B$,
\[
\left(\mathcal{A}f_{i}\right)\left(e\right)=\sum_{\vec{a}\in\mathbb{F}_{q}^{m-1}}f_{i}\left(g_{\vec{a}}\right)=\sum_{\vec{a}\in\mathbb{F}_{q}^{m-1}}\chi\left(g_{\vec{a}}\right)=q^{m-1}\cdot q^{1-\frac{m+1}{2}}z_{1}\delta_{1,i}=\delta_{1,i}q^{\frac{m-1}{2}}z_{1}\,.
\]
Assume from now on $1<j\leq m$, and observe $\left(\mathcal{A}f_{i}\right)\left(\left(1\,j\right)\right)=\sum_{\vec{a}\in\mathbb{F}_{q}^{m-1}}f_{i}\left(\left(1\,j\right)g_{\vec{a}}\right)$.
If $a_{1}=\ldots=a_{j-1}=0$, and $\pi_{j}$ is the identity matrix
with the $\left(j,j\right)$-th entry replaced by $\pi$, then $u:=\pi_{j}^{-1}\left(1\,j\right)g_{\vec{a}}\left(1\,j\right)\in U$,
the unipotent upper-triangular matrices in $G$. Since $\chi\big|_{U}\equiv1$,
\[
\sum_{\vec{a}\in\mathbb{F}_{q}^{m-1}\,:\;a_{1}=\ldots=a_{j-1}=0}\negthickspace\negthickspace\negthickspace\negthickspace\negthickspace\negthickspace\negthickspace\negthickspace\negthickspace\negthickspace\negthickspace\negthickspace f_{i}\left(\left(1\,j\right)g_{\vec{a}}\right)=\sum_{\ldots}f_{i}\left(\pi_{j}u\left(1\,j\right)\right)=q^{m-j}\chi\left(\pi_{j}\right)\delta_{j,i}=\delta_{j,i}q^{\frac{m-1}{2}}z_{j}\,.
\]
Next, if $a_{j-1}\neq0$, let
 $ \vec{b}=\left(-\tfrac{a_{1}}{a_{j-1}},\ldots,-\tfrac{a_{j-2}}{a_{j-1}},\tfrac{1}{a_{j-1}},-\tfrac{a_{j}}{a_{j-1}},\ldots,-\tfrac{a_{m-1}}{a_{j-1}}\right)\,,
 $
and note that $g_{\vec{b}}^{-1}\left(1\,j\right)g_{\vec{a}}\in P_{1}$,
so that 
\[
\sum_{\vec{a}\in\mathbb{F}_{q}^{m-1}\,:\;a_{j-1}\neq0}\negthickspace\negthickspace\negthickspace\negthickspace\negthickspace\negthickspace f_{i}\left(\left(1\,j\right)g_{\vec{a}}\right)=\left(q-1\right)q^{m-2}\chi\left(g_{\vec{b}}\right)\delta_{1,i}=\delta_{1,i}\left(q-1\right)q^{\frac{m-3}{2}}z_{1}\,.
\]
Finally, if $a_{j-1}=0$, let $k=\min\left\{ i\,\middle|\,a_{i-1}\neq0\right\} $
(note $k<j$), and let $b$ be the identity matrix with the $k$-th
row replaced by 
\[
b_{k,c}=\begin{cases}
0 & c<k\\
\pi & c=k\\
1/a_{k-1} & c=j\\
-a_{c-1}/a_{k-1} & \mbox{otherwise}\,.
\end{cases}
\]
Observe that $p:=\left(1\,k\right)b^{-1}\left(1\,j\right)g_{\vec{a}}\in P_{1}$,
so that
\begin{align*}
\sum_{\vec{a}\in S} f_{i}\left(\left(1\,j\right)g_{\vec{a}}\right) & =\sum_{\vec a \in S}f_{i}\left(\left(1\,k\right)bp\right) =\left(q-1\right)q^{m-k-1}\chi\left(\left(1\,k\right)b\right)=\left(q-1\right)q^{\frac{m-3}{2}}z_{k}\delta_{k,i}
\end{align*}
for $S = \{\vec{a}\in\mathbb{F}_{q}^{m-1}:a_{j}=a_{1}=\ldots=a_{k-1}=0,\,a_{k}\neq0\}$.
Altogether, we obtain that $\mathcal{A}$ acts
on $\left(\mathrm{Ind}_{B}^{G}\chi\right)^{P_{1}}$, with respect
to the orthonormal basis $\left\{ q^{-i/2}f_{i}\right\} $, by 
\begin{equation}
\left(\begin{matrix}q^{\frac{m-1}{2}}z_{1}\\
\left(q-1\right)q^{\frac{m-2}{2}}z_{1} & q^{\frac{m-1}{2}}z_{2}\\
\left(q-1\right)q^{\frac{m-1}{2}}z_{1} & \left(q-1\right)q^{\frac{m-2}{2}}z_{2} & q^{\frac{m-1}{2}}z_{3}\\
\vdots & \vdots & \ddots & \ddots\\
\left(q-1\right)q^{m-2}z_{1} & \left(q-1\right)q^{\frac{2d-5}{2}}z_{2} & \cdots & \left(q-1\right)q^{\frac{m-2}{2}}z_{m-1} & q^{\frac{m-1}{2}}z_{m}
\end{matrix}\right)\,.\label{eq:block}
\end{equation}
Each infinite-dimensional $K$-spherical representation in $L^{2}\left(\Gamma\backslash G\right)$
contributes a block of this form to $A_{\cY_{T,\fX}}$,
and hence, by the uniqueness of the $QR$-decomposition, it is not $r$-normal for any $r<m$. \end{proof} Let us remark that for the case
$m=2$, Ramanujan quotients of the building $\mathcal{B}$ are just
Ramanujan graphs, and the one-dimensional geodesic flow is the non-backtracking
random walk. In this case,~\eqref{eq:block} coincides with the unitary
block decomposition carried out for this walk in~\cite{Lubetzky2016}.

\subsection{Branching operators on chambers for simple groups}

\label{sub:branching-on-simple-gps} In this section we take $G=\mathbf{G}\left(F\right)$
to be any simple group, and determine which $G$-equivariant branching
operators on the chambers of $\mathcal{B}=\mathcal{B}\left(G\right)$
are collision-free. 

Any $G$-equivariant branching operator is of the form $T_{\mathscr{G}}\left(\sigma_{0}\right)=\mathcal{I}\mathscr{G}\sigma_{0}$
for some finite set $\mathscr{G}\subseteq G$. Observe that $T_{\mathscr{G}}\left(\sigma_{0}\right)=T_{\mathcal{I}\mathscr{G}\mathcal{I}}\left(\sigma_{0}\right)$,
so that decomposing $\mathcal{I}\mathscr{G}\mathcal{I}=\cup_{w\in\mathscr{W}}C_{w}$
with $\mathscr{W}\subseteq\mathcal{W}$, $T_{\mathscr{G}}$ equals
$T_{\mathscr{W}}$.

\begin{proposition}If $T=T_{\mathscr{W}}$ is collision-free for
$\mathscr{W}\subseteq\mathcal{W}$, then $\left|\mathscr{W}\right|=1$.\end{proposition}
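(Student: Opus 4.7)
The plan is to prove the contrapositive: assuming $|\mathscr{W}|\geq 2$, I will exhibit two distinct directed paths in $\cY_T$ with the same pair of endpoints, which contradicts the collision-free property. Pick any $w_1\neq w_2$ in $\mathscr{W}$. By the Iwahori--Bruhat decomposition $G=\coprod_{w\in\mathcal{W}}C_w$, the chambers $w_1\sigma_0$ and $w_2\sigma_0$ lie in distinct $\mathcal{I}$-double cosets, so $w_1\sigma_0\neq w_2\sigma_0$; and both are out-neighbors of $\sigma_0$ in $\cY_T$ since $w_i\sigma_0\in\mathcal{I}\mathscr{W}\sigma_0=T(\sigma_0)$.

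The essential algebraic input is the semidirect decomposition $\mathcal{W}=\mathcal{W}_{\mathrm{tr}}\rtimes W_G$ already recorded in the paper: taking $m=|W_G|$, the elements $w_1^m$ and $w_2^m$ lie in the abelian translation lattice $\mathcal{W}_{\mathrm{tr}}$ and therefore commute. Set $\tau:=w_1^m w_2^m\sigma_0=w_2^m w_1^m\sigma_0$, a single chamber of $\mathcal{B}$. I now construct two distinct length-$2m$ directed paths in $\cY_T$ from $\sigma_0$ to $\tau$: the first is
\[
\sigma_0,\ w_1\sigma_0,\ w_1^2\sigma_0,\ \ldots,\ w_1^m\sigma_0,\ w_1^m w_2\sigma_0,\ \ldots,\ w_1^m w_2^m\sigma_0=\tau,
\]
and the second interchanges the roles of $w_1$ and $w_2$ throughout, ending at $w_2^m w_1^m\sigma_0=\tau$.

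That each consecutive pair in either sequence is a directed edge of $\cY_T$ is immediate from $G$-equivariance: for any $g\in G$, $T(g\sigma_0)=g\,T(\sigma_0)=g\,\mathcal{I}\mathscr{W}\sigma_0$ (well-defined because $T(\sigma_0)$ is $\mathcal{I}$-stable), so the transition from $g\sigma_0$ to $gw_k\sigma_0$ for $w_k\in\mathscr{W}$ is always a step of $T$. The two paths share endpoints by the commutation of $w_1^m$ and $w_2^m$, but disagree at step $1$ since $w_1\sigma_0\neq w_2\sigma_0$, producing the required collision. The only real content beyond routine bookkeeping is this abelianization trick, which closely mirrors the argument already used in the proof of Proposition~\ref{prop:T-collision-free-Ramanujan}; I do not foresee a substantial obstacle.
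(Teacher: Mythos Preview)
Your proof is correct and follows essentially the same approach as the paper's: both pick two distinct elements of $\mathscr{W}$, raise them to the power $m=|W_G|$ so they land in the abelian group $\mathcal{W}_{\mathrm{tr}}$ and commute, and then use this commutation to produce a common descendant of $w_1\sigma_0$ and $w_2\sigma_0$, contradicting collision-freeness. Your write-up is slightly more explicit in spelling out the two paths step by step and in justifying $w_1\sigma_0\neq w_2\sigma_0$, but the argument is the same.
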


\begin{proof}Assume that $\mathscr{W}$ contains $w_{0}\neq w_{1}.$
Denoting $\mu=\left|W_{G}\right|$, one has $w_{0}^{\mu}w_{1}^{\mu}=w_{1}^{\mu}w_{0}^{\mu}$ as in 
the proof of Proposition~\ref{prop:T-collision-free-Ramanujan}.
Therefore, $w_{0}^{\mu}w_{1}^{\mu}\left(\sigma_{0}\right)$
is contained both in 
$T^{2\mu-1}\left(w_{0}\sigma_{0}\right)=\left(\mathcal{I}\mathscr{W}\mathcal{I}\right)^{2\mu-1}w_{0}\sigma_{0}$,
and in $T^{2\mu-1}\left(w_{1}\sigma_{0}\right)$. Since $w_{0}\sigma_{0},w_{1}\sigma_{0}\in T\left(\sigma_{0}\right)$,
this implies that $T$ is not collision-free.\end{proof}

Thus, we can restrict our attention to the case where $T=T_{w_{0}}$
for a single $w_{0}\in\mathcal{W}$. For what follows we need the notion
of the Weyl length $\ell\left(w\right)$ of $w\in\mathcal{W}=\left\langle S\right\rangle $, which
is the shortest length of a word in $S$ which equals $w$. In addition,
we need the projection $\rho:\mathcal{B}\rightarrow\mathfrak{A}$ defined by
the Iwahori--Bruhat decomposition,
namely $\rho\big|_{C_{w}\sigma_{0}}\equiv w\sigma_{0}$ $\left(\forall w\in\mathcal{W}\right)$.
Denote $k=\left|T_{w_{0}}\left(\sigma_{0}\right)\right|$, and let $\mu\geq1$
be an integer such that $w_{0}^{\mu}\in\mathcal{W}_{\mathrm{tr}}$
(for example, we can always take $\mu=\left|W_{G}\right|$). \begin{theorem}
\label{thm:bruhat-collision-free} With the above notation, the following
are equivalent: 
\begin{enumerate}
\item The operator $T_{w_{0}}$ is collision-free. 
\item $\left|T_{w_{0}}^{\mu}\left(\sigma_{0}\right)\right|=k^{\mu}$. 
\item $\ell\left(w_{0}^{\mu}\right)=\mu\cdot\ell\left(w_{0}\right).$ 
\item The random walk defined by $T_{w_{0}}$ projects onto a deterministic
walk in $\mathfrak{A}$, namely $\bigl|\rho\bigl(T_{w_{0}}^{j}\left(\sigma_{0}\right)\bigr)\bigr|=1$
for all $j$. 
\end{enumerate}
\end{theorem}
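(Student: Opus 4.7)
The plan is to prove the equivalences by traversing the cycle $(1)\Rightarrow(2)\Rightarrow(3)\Rightarrow(4)\Rightarrow(1)$, with two facts from Iwahori--Bruhat theory as workhorses: the product rule $C_{w}C_{w'}=C_{ww'}$ whenever $\ell(ww')=\ell(w)+\ell(w')$, and the quadratic Hecke relation $T_{s}^{2}=(q-1)T_{s}+qT_{e}$ that governs every failure of length-additivity.

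The step $(1)\Rightarrow(2)$ is immediate: collision-freeness sends $\sigma_{0}$ to exactly $k^{\mu}$ distinct chambers after $\mu$ steps. For $(2)\Rightarrow(3)$, I would write the disjoint decomposition $(\mathcal{I}w_{0}\mathcal{I})^{\mu}=\bigsqcup_{w\in\mathscr{W}_{\mu}}\mathcal{I}w\mathcal{I}$ for the set $\mathscr{W}_{\mu}\subseteq\mathcal{W}$ of double cosets that appear, which gives $|T_{w_{0}}^{\mu}(\sigma_{0})|=\sum_{w\in\mathscr{W}_{\mu}}q^{\ell(w)}$. In parallel, the Iwahori--Hecke expansion $T_{w_{0}}^{\mu}=\sum_{w}c_{w}T_{w}$ has non-negative integer coefficients $c_{w}$, and applying the index character $T_{w}\mapsto q^{\ell(w)}$ yields $\sum_{w}c_{w}q^{\ell(w)}=k^{\mu}$. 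The hypothesis $|T_{w_{0}}^{\mu}(\sigma_{0})|=k^{\mu}$ therefore forces $c_{w}\in\{0,1\}$ for every $w$. But any reduction of the form $T_{u}T_{s}\mapsto(q-1)T_{u}+qT_{us}$ occurring during the $\mu$-fold product introduces a coefficient $\geq q\geq 2$ that can only grow under further non-negative multiplications; hence no such reduction occurs, $\mathscr{W}_{\mu}=\{w_{0}^{\mu}\}$, and $\ell(w_{0}^{\mu})=\mu\ell(w_{0})$.

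For $(3)\Rightarrow(4)$, the product rule immediately gives $(\mathcal{I}w_{0}\mathcal{I})^{\mu}=\mathcal{I}w_{0}^{\mu}\mathcal{I}$ and hence $\rho(T_{w_{0}}^{\mu}(\sigma_{0}))=\{w_{0}^{\mu}\sigma_{0}\}$. To upgrade to all $j$: for $0\leq j\leq\mu$, subadditivity $\ell(w_{0}^{\mu})\leq\ell(w_{0}^{j})+\ell(w_{0}^{\mu-j})\leq\mu\ell(w_{0})$ collapses to equalities, giving $\ell(w_{0}^{j})=j\ell(w_{0})$; for $j>\mu$, writing $w_{0}^{\mu}=t_{v}\in\mathcal{W}_{\mathrm{tr}}$ a translation, the Iwahori--Matsumoto length formula yields $\ell(t_{pv})=p\ell(t_{v})$ and $\ell(t_{pv}\cdot w_{0}^{r})=\ell(t_{pv})+\ell(w_{0}^{r})$ for $p\geq 1$ and $0\leq r<\mu$. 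Hence $\ell(w_{0}^{j})=j\ell(w_{0})$ for every $j$, so $(\mathcal{I}w_{0}\mathcal{I})^{j}=\mathcal{I}w_{0}^{j}\mathcal{I}$ and $\rho(T_{w_{0}}^{j}(\sigma_{0}))=\{w_{0}^{j}\sigma_{0}\}$.

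Finally, $(4)\Rightarrow(1)$: condition $(4)$ confines $T_{w_{0}}^{j}(\sigma_{0})$ to the single $\mathcal{I}$-orbit $\mathcal{I}w_{0}^{j}\sigma_{0}$ of size $q^{\ell(w_{0}^{j})}\leq k^{j}$. The $k^{j}$ length-$j$ paths from $\sigma_{0}$ all land there, and $\mathcal{I}$-equivariance of the path count forces the equality $q^{\ell(w_{0}^{j})}=k^{j}$ and each endpoint to be hit exactly once. Different-length collisions are impossible, as $(3)$ implies that $w_{0}^{\mu}$ is a nontrivial translation (otherwise $\ell(w_{0})=0$), hence $w_{0}$ has infinite order in $\mathcal{W}$ and the orbits $\mathcal{I}w_{0}^{j}\sigma_{0}$ are pairwise disjoint; $G$-equivariance then promotes collision-freeness from $\sigma_{0}$ to every chamber. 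I anticipate the main obstacle to be the upgrade of $(3)$ from a single $\mu$ to all $j$, which genuinely requires the Iwahori--Matsumoto length formula together with the translation structure of the affine Weyl group, rather than only the abstract Coxeter presentation.
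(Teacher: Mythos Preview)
Your argument is correct and takes a genuinely different route in $(2)\Rightarrow(3)$. You work in the Iwahori--Hecke algebra: writing $T_{w_0}^\mu = \sum_w c_w T_w$ with $c_w \in \mathbb{Z}_{\geq 0}$, you compare $|T_{w_0}^\mu(\sigma_0)| = \sum_{c_w>0} q_w$ (where $q_w = |C_w\sigma_0|$; note that for general $G$ the parameters $q_s$ need not coincide, so your ``$q^{\ell(w)}$'' should be read as $q_{s_1}\cdots q_{s_{\ell(w)}}$ throughout) against the index-character identity $\sum_w c_w q_w = k^\mu$ to force all $c_w \leq 1$, and then observe that the maximum coefficient is non-decreasing under right-multiplication by any $T_s$ (if $c_v$ is maximal then $c'_{vs}\geq c_v$ in either length case), so that the first Hecke reduction would make it $\geq q_s \geq 2$ permanently. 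The paper instead tracks double cosets directly via the Bruhat relation $C_w C_s = C_{ws}$ or $C_{ws}\cup C_w$: it locates the first position at which the word $(s_1\cdots s_\ell)^\mu$ ceases to be reduced and shows by explicit fiber-counting that $\Upsilon < k^r q_{s_1}\cdots q_{s_i}$ there. Your Hecke-algebra formulation is cleaner and more conceptual; the paper's is more elementary and self-contained.

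One small point in $(3)\Rightarrow(4)$: the claim that Iwahori--Matsumoto yields $\ell(t_{pv}\cdot w_0^r) = \ell(t_{pv}) + \ell(w_0^r)$ is not quite immediate, since $w_0^r$ need not lie in the finite Weyl group $W_G$. But you do not actually need it: once $\ell(w_0^{p\mu}) = p\mu\ell(w_0)$ is known (which \emph{does} follow from Iwahori--Matsumoto for the translation $t_{pv}$), the very same subadditivity squeeze you used for $j\leq\mu$, applied now with $(p+1)\mu$ in place of $\mu$, gives $\ell(w_0^j)=j\ell(w_0)$ for all $j$. The paper reaches the same conclusion by a geometric shortcut: interpreting $\ell(w)$ as the number of walls separating $\sigma_0$ from $w\sigma_0$ immediately gives $\ell(w_0^{\mu j}) = j\ell(w_0^\mu)$ for the translation $w_0^\mu$, so $(s_1\cdots s_\ell)^{\mu j}$ is reduced, and hence so is every prefix.
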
 Note that if $w_{0}\in\mathcal{W}_{tr}$ then \emph{(3)}
 holds with $\mu=1$, so $T_{w_0}$ is collision-free. 
\begin{proof} It is clear that
\emph{(1)}$\Rightarrow$\emph{(2)}, and we proceed to show that for
every $j\in\mathbb{N}$, 
\begin{equation}
\left|T_{w_{0}}^{j}\left(\sigma_{0}\right)\right|=k^{j}\quad\Leftrightarrow\quad\ell\left(w_{0}^{j}\right)=j\cdot\ell\left(w_{0}\right)\quad\Leftrightarrow\quad\left|\rho\left(T_{w_{0}}^{j}\left(\sigma_{0}\right)\right)\right|=1,\label{eq:fiber-to-length}
\end{equation}
from which \emph{(2)}$\Leftrightarrow$\emph{(3)$\Leftarrow$(4)}
would follow.

 If one defines $q_{s}=\left|C_{s}\sigma_{0}\right|$
for $s\in S$, then whenever $w=s_{1}\cdots s_{\ell\left(w\right)}$
is a reduced word, the fiber $C_{w}\sigma_{0}$ of $\rho$ over $w\sigma_{0}$
is of size $q_{s_{1}}\cdot\ldots\cdot q_{s_{\ell\left(w\right)}}$
(see~\cite[\S6.2]{Garrett1997}). In particular, writing $w_{0}=s_{1}\cdots s_{\ell\left(w_{0}\right)}$,
we have $k=q_{s_{1}}\cdot\ldots\cdot q_{s_{\ell\left(w_{0}\right)}}$.
For each $1\leq i\leq\ell\left(w_{0}\right)$, define a branching
operator $T_{i}$ on $\mathcal{B}\left(d\right)$ by $T_{i}\left(g\sigma_{0}\right)=gC_{s_{i}}\sigma_{0}$,
and observe that $T_{w_{0}}=T_{\ell\left(w_{0}\right)}\circ\ldots\circ T_{1}$.
Since each $T_{i}$ is $q_{s_{i}}$-regular, $T_{w_{0}}^{j}\left(\sigma_{0}\right)$
is of size $k^{j}$ if and only if 
\begin{equation}
\Upsilon:=\left|T_{i}\left(T_{i-1}\left(\ldots\left(T_{1}\left(T_{w_{0}}^{r}\left(\sigma_{0}\right)\right)\right)\right)\right)\right|=k^{r}q_{s_{1}}\ldots q_{s_{i}}\label{eq:Tj_size}
\end{equation}
for every $1\leq r<j$ and $1\leq i\leq\ell\left(w_{0}\right)$. Recall
the Bruhat relations: 
\begin{equation}
C_{w}C_{s}=\begin{cases}
C_{ws} & \ell\left(ws\right)=\ell\left(w\right)+1\\
C_{ws}\cup C_{w} & \ell\left(ws\right)=\ell\left(w\right)-1
\end{cases}\,,\label{eq:Bruhat-relation}
\end{equation}
which hold for every $w\in\mathcal{W}$ and $s\in S$. If $\ell\bigl(w_{0}^{j}\bigr)=j\cdot\ell(w_{0})$
then $\left(s_{1}\cdots s_{\ell\left(w_{0}\right)}\right)^{j}$ is
a reduced word for $w_{0}^{j}$, so that~\eqref{eq:Bruhat-relation}
implies
\[
T_{w_{0}}^{j}\left(\sigma_{0}\right)=\left(C_{w_{0}}\right)^{j}\sigma_{0}=\left(C_{s_{1}}\ldots C_{s_{\ell\left(w_{0}\right)}}\right)^{j}\sigma_{0}=C_{w_{0}^{j}}\sigma_{0},
\]
from which one infers that $\rho\bigl(T_{w_{0}}^{j}\left(\sigma_{0}\right)\bigr)=\bigl\{ w_{0}^{j}\sigma_{0}\bigr\} $
and $\bigl|T_{w_{0}}^{j}\left(\sigma_{0}\right)\bigr|=k^{j}$.

In the other direction, if $\ell\bigl(w_{0}^{j}\bigr)<j\cdot\ell\left(w_{0}\right)$,
consider the first $r,i$ for which $w_{0}^{r}s_{1}\cdots s_{i}$
is not reduced, and let $w'=w_{0}^{r}s_{1}\cdots s_{i-1}$. By the
exchange property of Coxeter groups, one can remove some letter $s$
from $w'$, obtaining a word $w''$ such that $w''s_{i}$ is a reduced
word for $w'$. This also implies that $w''$ is a reduced word for
$w's_{i}$, so that from~\eqref{eq:Bruhat-relation} one has
\[
C_{w_{0}^{r}s_{1}\cdots s_{i-1}}C_{s_{i}}\sigma_{0}=C_{w''}\sigma_{0}\cup C_{w''s_{i}}\sigma_{0}
\]
(a disjoint union, as $\mathcal{W}$ acts freely on $\mathfrak{A}$). 
From this one sees that~\eqref{eq:Tj_size} fails: 
\begin{align*}
\Upsilon & =\left|C_{w_{0}^{r}s_{1}\cdots s_{i-1}}C_{s_{i}}\sigma_{0}\right|=\left|C_{w''}\sigma_{0}\right|+\left|C_{w''s_{i}}\sigma_{0}\right|=\frac{k^{r}q_{s_{1}}\ldots q_{s_{i-1}}}{q_{s}}+\frac{k^{r}q_{s_{1}}\ldots q_{s_{i}}}{q_{s}}\\
 & =k^{r}q_{s_{1}}\ldots q_{s_{i-1}}\left(\frac{1+q_{s_{i}}}{q_{s}}\right)<k^{r}q_{s_{1}}\ldots q_{s_{i}}\,,
\end{align*}
since the building  associated with a simple group is thick ($q_{s}\geq2$). This implies that $|T_{w_0}^{r+1}(\sigma_0)|\leq k^{r+1}$ and $|T_{w_0}^j(\sigma_0)|<k^j$.
Furthermore,  $\bigl|\rho\bigl(T_{w_{0}}^{j}\left(\sigma_{0}\right)\bigr)\bigr|\geq2$,
since 
\[
T_{w_{0}}^{j}\left(\sigma_{0}\right)=C_{w_{0}^{r}s_{1}\cdots s_{i-1}}C_{s_{i}}C_{s_{i+1}}\cdots C_{s_{\ell\left(w_{0}\right)}}\left(C_{s_{1}}\cdots C_{s_{\ell\left(w_{0}\right)}}\right)^{j-r-1}\left(\sigma_{0}\right)
\]
contains the cells $w''s_{i+1}\ldots s_{\ell\left(w_{0}\right)}w_{0}^{j-r-1}\sigma_{0}$
and $w''s_{i}\ldots s_{\ell\left(w_{0}\right)}w_{0}^{j-r-1}\sigma_{0}$ (which are different since $\cW$ acts simply transitively on the chambers of $\mathfrak{A}$), yielding~\eqref{eq:fiber-to-length}.

Next, we recall the fact (cf., e.g.,~\cite[\S12.2]{Garrett1997}) that $\ell\left(w\right)$ (for any $w\in\mathcal{W}$)
equals the number of walls in $\mathfrak{A}$ separating $\sigma_{0}$
from $w\sigma_{0}$. In particular, this applies to $w=w_{0}^{\mu}$,
which acts on $\mathfrak{A}$ by translation, hence for every $j\in\mathbb{N}$
there are $j\cdot\ell\left(w_{0}^{\mu}\right)$ walls separating $\sigma_{0}$
from $w_{0}^{\mu j}\sigma_{0}$. If \emph{(3)} holds, then 
$\ell\bigl(w_{0}^{\mu j}\bigr)=j\cdot\ell\left(w_{0}^{\mu}\right)=j\mu\ell\left(w_{0}\right)$.
But this means that $\left(s_{1}\cdots s_{\ell\left(w_{0}\right)}\right)^{j\mu}$
is reduced for every $j$, hence also $\ell\bigl(w_{0}^{j}\bigr)=j\ell\left(w_{0}\right)$
for every $j$, thus by~\eqref{eq:fiber-to-length} we have \emph{(3)$\Rightarrow$(4)}. Finally,
it is now clear that \emph{(2),(3),(4)} imply \emph{(1)}: for $j\neq j'$,
\emph{(4)} implies that $T_{w_{0}}^{j}\left(\sigma_{0}\right)\cap T_{w_{0}}^{j'}\left(\sigma_{0}\right)=\varnothing$,
and for each $j$, \emph{(4)} and~\eqref{eq:fiber-to-length} together
imply that $\bigl|T_{w_{0}}^{j}\left(\sigma_{0}\right)\bigr|=k^{j}$.
\end{proof} We remark that we can apply this theorem to the $d$-dimensional
geodesic flow $T$ on $\PGL_{d+1}$, even though it is a non-simple
group; one can look at $T^{d+1}$, which belongs to the simple group
$\PSL_{d+1}$ and satisfies the conditions of the theorem.

\section{\label{sec:Zeta-functions}Zeta functions of digraphs and Ramanujan
complexes}

Recall  that Ihara~\cite{ihara1966discrete} associated with
every graph $\cG$ a zeta function defined by
$
Z_{\cG}\left(u\right)=\prod_{\left[\gamma\right]}\frac{1}{1-u^{\ell\left(\gamma\right)}}\,,\label{eq:Ihara-zeta-euler}
$
where $\left[\gamma\right]$ runs over the equivalence classes of
geodesic tailless primitive cycles $\gamma$ in $\cG$ (two cycles are
equivalent if they differ by a cyclic rotation), and $\ell\left(\gamma\right)$
is the length of $\gamma$. Taking a logarithmic derivative, one
gets
\[
Z_{\cG}\left(u\right)=\exp\biggl(\sum_{m=1}^{\infty}\frac{N_{m}\left(\cG\right)}{m}u^{m}\biggr)\,,
\]
where $N_{m}\left(\cG\right)$ is the number of geodesic tailless cycles
in $\cG$ of length $m$. For a $k$-regular graph $\cG$ with $k=q+1$, Ihara showed
that
\begin{equation}
Z_{\cG}\left(u\right)=\left[{\left(1-u^{2}\right)^{\beta_{1}}\det\left(I-uA_{\cG}+u^{2}qI\right)}\right]^{-1}\,,\label{eq:Ihara-det}
\end{equation}
where $\beta_{1}$ is the first Betti number  of $\cG$. From~\eqref{eq:Ihara-det} one deduces (cf.~\cite[p.~59]{Lub10}) that
$\cG$ is Ramanujan iff the associated zeta
function satisfies the Riemann Hypothesis:
\[
Z_{\cG}\left(q^{-s}\right)=\infty\quad\Rightarrow\quad\text{either }q^{-s}\in\bigl\{ \pm1,\pm\tfrac{1}{q}\bigr\} \text{ or }\Re\left(s\right)=\tfrac{1}{2}\,,
\]
i.e., all the poles of $Z_{\cG}\left(u\right)$ are obtained at $u= \pm1,\pm\frac{1}{q}$, or for $|u|=\sqrt{q}$.

Hashimoto~\cite{hashimoto1989zeta} formulated a variant of this by
showing
\[
Z_{\cG}\left(u\right)=\det\left(I-uT\right)^{-1}\,,
\]
where $T$ is the non-backtracking operator on directed edges (coinciding with the case $m=2$, $j=1$ of our geodesic flow in \S\ref{sub:Geodesic-flow}). This
approach generalizes to arbitrary digraphs: indeed, defining
the zeta function of a finite digraph $\cY$ to be
\begin{equation}
Z_{\cY}\left(u\right)=\prod_{\left[\gamma\right]}\frac{1}{1-u^{\ell\left(\gamma\right)}}\,,\label{eq:digraph-zeta}
\end{equation}
where $\left[\gamma\right]$ runs over  the equivalence classes of primitive
(directed) cycles in $\cY$, one has:
\begin{theorem}[\cite{bowen1970zeta,kotani2000zeta}]
\label{thm:digraph-zeta-polynom}If $N_{m}\left(\cY\right)$ is the
number of (directed) cycles of length $m$ in $\cY$, then 
\[
Z_{\cY}\left(u\right)=\exp\biggl(\sum_{m=1}^{\infty}\frac{N_{m}\left(\cY\right)}{m}u^{m}\bigg)=\frac{1}{\det\left(I-uA_{\cY}\right)}\,.
\]
\end{theorem}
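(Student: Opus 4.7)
The plan is to prove the two equalities separately: first the combinatorial identity between the Euler product and the exponential generating series, and then the identification of this exponential with $1/\det(I - uA_\cY)$, both as formal power series in $u$ (equivalently, as analytic identities for $|u|$ smaller than $1/\rho(A_\cY)$).

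First I would take the logarithm of the Euler product and expand. Writing $\pi_d$ for the number of equivalence classes of primitive directed cycles of length $d$ in $\cY$, we have
\[
\log Z_{\cY}(u) \;=\; \sum_{[\gamma]} -\log\bigl(1 - u^{\ell(\gamma)}\bigr) \;=\; \sum_{d\geq 1} \pi_d \sum_{r\geq 1} \frac{u^{dr}}{r}\,.
\]
Collecting terms of total degree $m = dr$ gives $\log Z_{\cY}(u) = \sum_{m\geq 1} \frac{u^m}{m} \sum_{d\mid m} d\,\pi_d$. The key combinatorial identity I would then establish is
\[
N_m(\cY) \;=\; \sum_{d\mid m} d\,\pi_d\,.
\]
The bijection behind this identity sends a closed directed walk of length $m$ based at a vertex to the unique primitive cycle $\gamma$ of some length $d\mid m$ it is a power of, together with a choice of one of the $d$ starting points in $\gamma$; primitivity of $\gamma$ is exactly what makes these $d$ rotations distinct closed walks, so no overcounting occurs. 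Substituting yields $\log Z_{\cY}(u) = \sum_{m\geq 1} N_m(\cY)\, u^m/m$, which is the first equality.

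Next I would identify this with the determinantal expression. For $|u|$ small we have the standard matrix identity $\log\det(I - uA_\cY) = \mathrm{tr}\,\log(I - uA_\cY) = -\sum_{m\geq 1} \frac{u^m}{m}\, \mathrm{tr}(A_\cY^m)$. Since the $(v,v)$-entry of $A_\cY^m$ counts directed walks of length $m$ from $v$ to itself, $\mathrm{tr}(A_\cY^m) = N_m(\cY)$. Exponentiating and inverting gives
\[
\det(I - uA_\cY)^{-1} \;=\; \exp\Bigl(\sum_{m\geq 1} \frac{N_m(\cY)}{m}\, u^m\Bigr)\,,
\]
completing the chain of equalities. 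Since both sides are rational functions of $u$ (the right as $1/\det(I-uA_\cY)$ is rational, the left is rational once we identify it with the product over finitely many primitive classes weighted by convergent geometric series), the identity of the two as formal power series extends to the claimed equality of functions.

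The main subtlety — and the only real content beyond a formal expansion — is the identity $N_m = \sum_{d\mid m} d\,\pi_d$. The work there is to argue that a closed walk of length $m$ decomposes uniquely as a power of a primitive one, and that the number of rotations of a primitive cycle of length $d$ giving genuinely distinct based closed walks is exactly $d$. This is a direct consequence of the fact that a cyclic word has a nontrivial rotational stabilizer if and only if it is a nontrivial power, which is the definitional content of primitivity. Once this is in hand, both equalities reduce to rearrangement of absolutely convergent series (for small $u$) and matching coefficients.
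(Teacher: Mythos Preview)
Your argument is correct and is essentially the standard proof of this classical identity. Note, however, that the paper does not supply its own proof of this statement: it is quoted from the literature (Bowen--Lanford and Kotani--Sunada) and used as a black box, so there is no ``paper's proof'' to compare against. Your write-up would serve well as a self-contained justification; the only point worth tightening is to make explicit that $N_m(\cY)$ is being read as the number of \emph{based} closed directed walks of length $m$ (equivalently $\mathrm{tr}(A_\cY^m)$), since the paper's phrasing ``(directed) cycles'' could otherwise be misread as simple cycles.
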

Various authors have suggested various zeta functions associated to
simplicial complexes with the hope to generalize the Hashimoto work~\cite{deitmar2006ihara,kang2010zeta,kang2014zeta,fang2013zeta,kang2015zeta,kang2016riemann}.
We will associate a zeta function of this kind with every $G$-equivariant branching operator:
\begin{definition}
For every $G$-equivariant branching operator $T$ on $\fC\subseteq\mathcal{B}=\mathcal{B}(G)$,
the \emph{$T$-zeta function} of a quotient complex
$X=\Gamma\backslash\mathcal{B}$ (as in \S\ref{sec:Ramanujan-complexes})
is defined as 
$\mathfrak{z}_{T}\left(X,u\right)=Z_{\cY_{T,\fX}}(u)$, namely \eqref{eq:digraph-zeta}, with $\gamma$ running over the equivalence classes of primitive
cycles in the digraph defined by $T$ on $\fX$.
\end{definition}
Theorem~\ref{thm:digraph-zeta-polynom} implies that, for $\cY=\cY_{T,\fX}$, one has
\[
\mathfrak{z}_{T}\left(X,u\right)=\det\left(I-uA_\cY\right)^{-1}\,.
\]
We have shown in~\S\ref{sec:Ramanujan-complexes} that if $X=\Gamma\backslash\cB$ is a Ramanujan complex
and $T$ is a $G$-equivariant, $k$-regular, collision-free branching operator on $\cB$, then $\cY_{T,\fX}$ is Ramanujan. In particular, the $T$-zeta function of $X$ satisfies the R.H.; that is, if 
$\mathfrak{z}_{T}\left(X,k^{-s}\right)=\infty$ then either $|s|=1$ or $\Re\left(s\right)\le\frac{1}{2}$, establishing Corollary~\ref{cor:R-H-for-Ramanujan}.
\begin{remark}
 The bound $\left|\Re (s)\right|\leq\frac{1}{2}$ is the true situation: there are poles 
with $\left|\Re (s)\right|<\frac{1}{2}$. In fact, already in the graph case there are poles with $|\Re (s)|=0$, 
and in higher dimension there are also poles with $0<|\Re( s)|<\frac{1}{2}$ (see, e.g.,~\cite{kang2010zeta}).
\end{remark}
\begin{remark} 
Zeta functions of this kind have been studied by a 
number of authors~\cite{deitmar2006ihara,kang2010zeta,kang2014zeta,kang2015zeta,fang2013zeta,kang2016riemann}.
The most general results regarding the R.H.\ are due to Kang~\cite{kang2016riemann},
who studied the case of buildings of type $\widetilde{A}_{n}$
and gave more detailed information about the poles of the zeta functions corresponding to geodesic flows,
via case by case analysis of the representations of $\mathrm{GL}_{n}\left(F\right)$. In~\cite{fang2013zeta}, 
the same is done for the building of type $\widetilde{C}_2$ associated with $\mathrm{PSp}(4)$.
Our simple combinatorial treatment gives
the upper bound on $\Re\left(s\right)$ but not the exact possible
values of it. However, our method applies for any building, and not only those of type $\widetilde{A}_{n}$ and $\widetilde{C}_{2}$.
It seems, in any case, that for combinatorial applications
the upper bound on $\Re\left(s\right)$ suffices, as is illustrated in this paper.
\end{remark}
\begin{remark}
 For $k$-regular graphs,  it is known that the converse also holds: the R.H.\ for the zeta function of a graph implies
it is Ramanujan. In~\cite{kang2010zeta} it was shown that the R.H.\ for zeta functions of geodesic
flows on the building of type $\widetilde{A}_2$ implies that a quotient complex is Ramanujan. Recently, an analogous result was
established by Kamber for buildings of general type~\cite{Kamber2017LP}.
\end{remark}

\subsection*{Acknowledgment} The authors wish to thank Peter Sarnak for many helpful discussions.
E.L.\ was supported in part by NSF grant DMS-1513403 and BSF grant 2014361. A.L.\ was supported by the ERC, BSF and NSF.

\bibliographystyle{abbrv}
\bibliography{mybib}

\end{document}